\newtheorem{theorem}{Theorem}[section]
\newtheorem{corollary}{Corollary}[section]
\newtheorem{lemma}{Lemma}[section]
\newtheorem{proposition}{Proposition}[section]
\theoremstyle{definition}
\newtheorem{definition}{Definition}
\newtheorem{remark}{Remark}
\newcommand{\R}{\mathbf{R}}
\newcommand{\N}{\mathbf{N}}
\newcommand{\Disk}{\mathbf{S}}
\newcommand{\weakto}{\rightharpoonup}
\newcommand{\weakstarto}{\stackrel*\rightharpoonup}
\newcommand{\hd}{\mathcal{H}}
\newcommand{\hdone}{\mathcal{H}^1}
\newcommand{\restr}{{\mbox{\LARGE$\llcorner$}}}
\renewcommand{\restriction}{|}
\newcommand{\pushforward}[2]{{{#1}_{\#}#2}}
\DeclareMathOperator{\supp}{supp}
\DeclareMathOperator{\alphargmin}{argmin}
\DeclareMathOperator{\dive}{div}
\newcommand\F{\mathcal{G}}
\newcommand\f{{G}}
\newcommand\E{\mathcal{E}}
\renewcommand\L{\mathcal{L}}
\renewcommand{\d}{{\mathrm d}}
\newcommand{\dist}{{\mathrm{dist}}}
\newcommand{\flux}{{\mathcal{F}}}
\newcommand{\prob}{{\mathcal{P}}}
\newcommand{\meas}{{\mathcal{M}}}
\newcommand{\cont}{{\mathcal{C}}}
\newcommand{\Lip}{{\mathrm{Lip}}}
\newlength{\dhatheight}
\title{Phase field approximations of branched transportation problems}
\author{Luca A. D. Ferrari, Carolin Rossmanith, Benedikt Wirth}
\date{}
\begin{document}
\maketitle

\begin{abstract}
In branched transportation problems mass has to be transported from a given initial distribution to a given final distribution,
where the cost of the transport is proportional to the transport distance, but subadditive in the transported mass.
As a consequence, mass transport is cheaper the more mass is transported together,
which leads to the emergence of hierarchically branching transport networks.
We here consider transport costs that are piecewise affine in the transported mass with $N$ affine segments,
in which case the resulting network can be interpreted as a street network composed of $N$ different types of streets.
In two spatial dimensions we propose a phase field approximation of this street network using $N$ phase fields
and a function approximating the mass flux through the network. 
We prove the corresponding $\Gamma$-convergence and show some numerical simulation results.
\end{abstract}

\section{Introduction}
Branched transportation problems constitute a special class of optimal transport problems that have recently attracted lots of interest (see for instance \cite[\S4.4.2]{Sa15} and the references therein).
Given two probability measures $\mu_+$ and $\mu_-$ on some domain $\Omega\subset\R^n$, representing an initial and a final mass distribution, respectively,
one seeks the most cost-efficient way to transport the mass from the initial to the final distribution.
Unlike in classical optimal transport, the cost of a transportation scheme does not only depend on initial and final position of each mass particle,
but also takes into account how many particles travel together.
In fact, the transportation cost per transport distance is typically not proportional to the transported mass $m$,
but rather a concave, nondecreasing function $\tau:[0,\infty)\to[0,\infty)$, $m\mapsto\tau(m)$.
Therefore it is cheaper to transport mass in bulk rather than moving each mass particle separately.
This automatically results in transportation paths that exhibit an interesting, hierarchically ramified structure (see Figure~\ref{fig:teaser}).
Well-known models (with parameters $\alpha<1$, $a,b>0$) are obtained by the choices
\begin{align*}
\tau(m)&=m^\alpha&\text{(``branched transport'' \cite{BLTJ:BLTJ4250}),}\\
\tau(m)&=\min(m,am+b)&\text{(``urban planning'' \cite{MR2176114,MR2535060}),}\\
\tau(m)&=\begin{cases}0&\text{if }m=0,\\1&\text{ else}\end{cases}&\text{(``Steiner tree problem'' \cite{Ambr_Tilli,Gilb_Poll,Paol_Step}).}
\end{align*}

\begin{figure}
\center
\includegraphics[width = 0.3\textwidth,trim=25 15 35 15,clip]{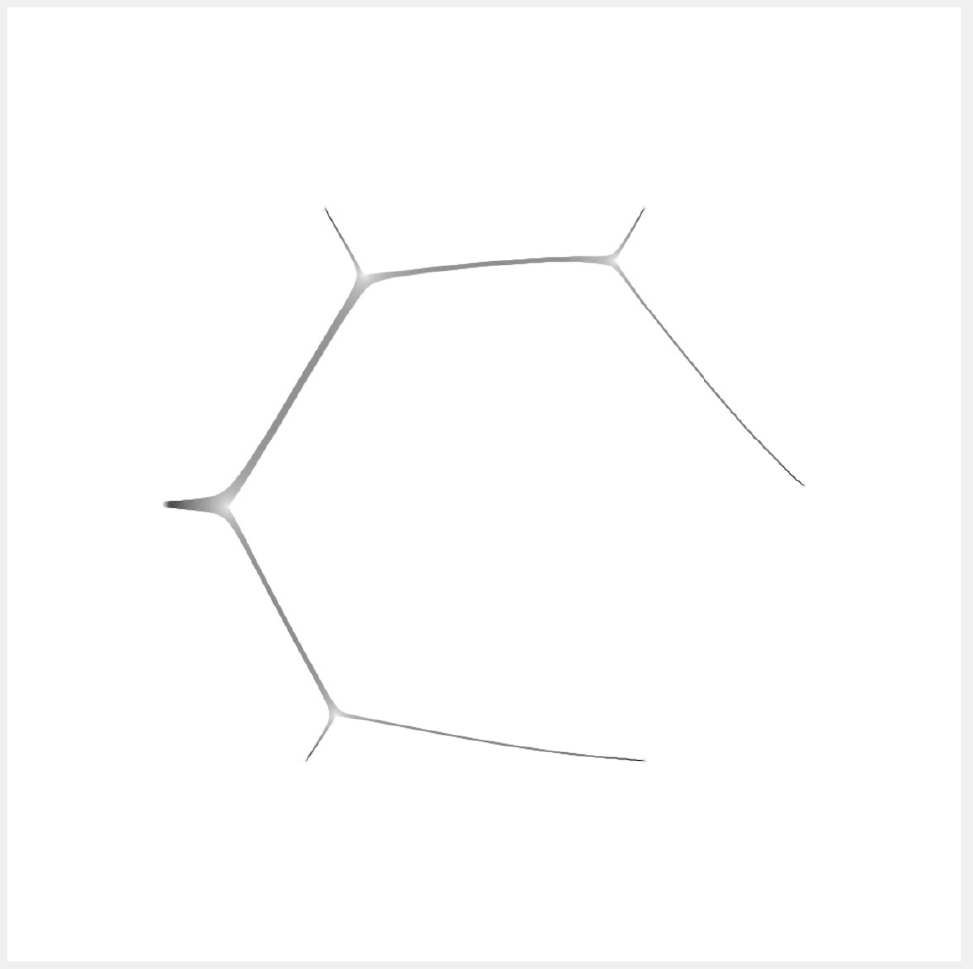}
\begin{picture}(0,0)(1,0)
\put(-113.12,57.52){\small$+$}
\put(-88.62,16.12){\small$-$}
\put(-34.62,16.12){\small$-$}
\put(-84.62,105.12){\small$-$}
\put(-34.62,105.12){\small$-$}
\put(-6.62,60.12){\small$-$}
\end{picture}
\begin{picture}(0,0)(1,0)\put(-60.1,-2.22){(a)}\end{picture}
\includegraphics[width = 0.3\textwidth,trim=25 15 35 15,clip]{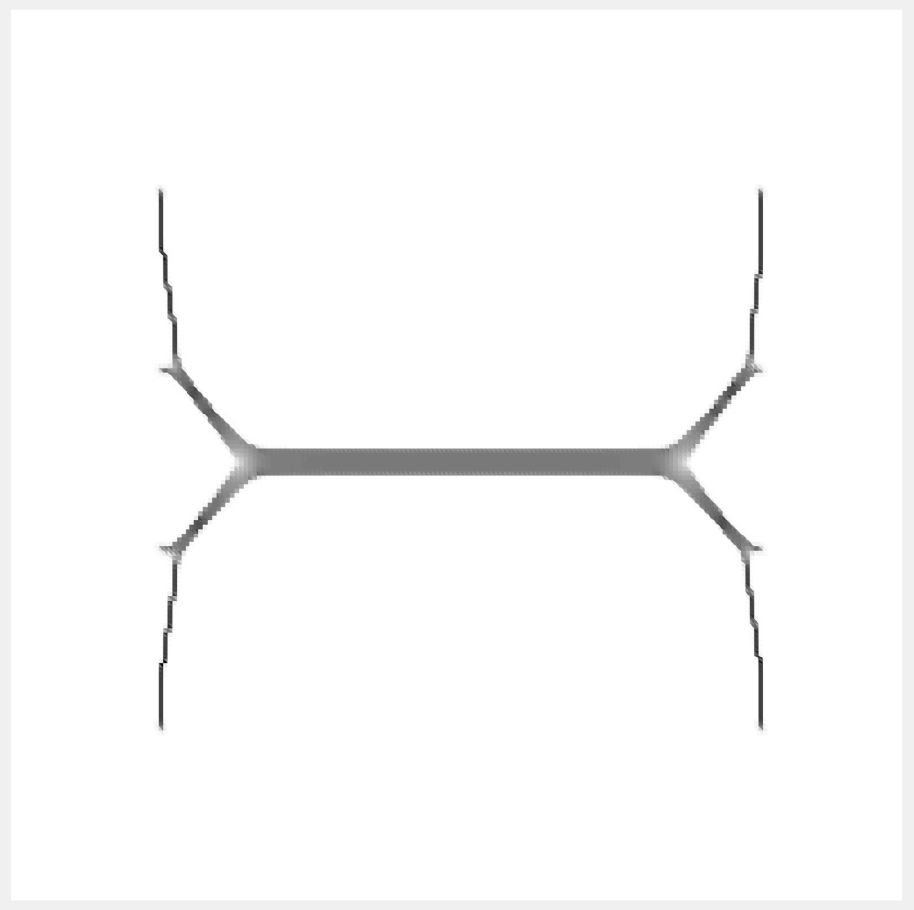}
\begin{picture}(0,0)(1,0)
\put(-111.12,18.52){\small$+$}
\put(-111.12,46.52){\small$+$}
\put(-111.12,75.52){\small$+$}
\put(-111.12,102.52){\small$+$}
\put(-5.12,18.52){\small$-$}
\put(-5.12,46.52){\small$-$}
\put(-5.12,75.52){\small$-$}
\put(-5.12,102.52){\small$-$}
\end{picture}
\begin{picture}(0,0)(1,0)\put(-60.1,-2.22){(b)}\end{picture}
\includegraphics[width = 0.3\textwidth,trim=20 10 15 10,clip]{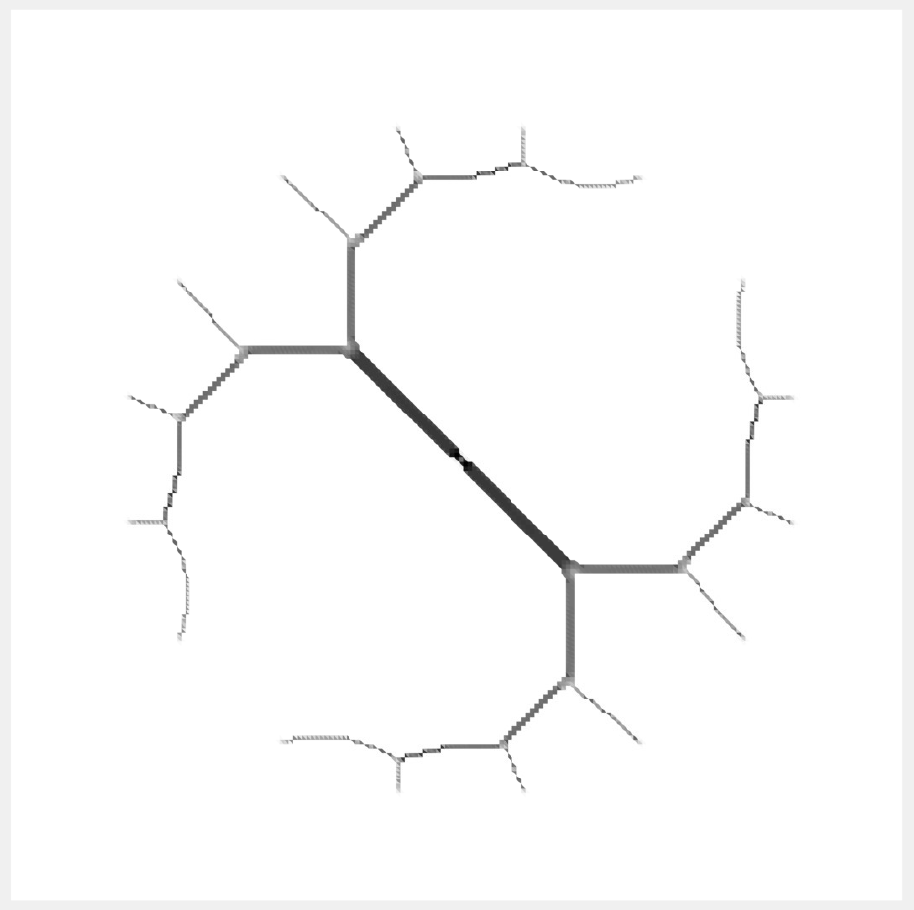}
\begin{picture}(0,0)(1,0)
\put(-59.12,60.52){\small$+$}

\put(-88.12,13.52){\small$-$}
\put(-72.12,6.52){\small$-$}
\put(-53.12,5.52){\small$-$}
\put(-36.12,12.52){\small$-$}

\put(-20.12,26.52){\small$-$}
\put(-11.12,44.52){\small$-$}
\put(-11.12,64.52){\small$-$}
\put(-17.12,82.52){\small$-$}

\put(-107.12,28.52){\small$-$}
\put(-114.12,46.52){\small$-$}
\put(-114.12,64.52){\small$-$}
\put(-103.12,83.52){\small$-$}

\put(-89.12,99.52){\small$-$}
\put(-71.12,106.52){\small$-$}
\put(-53.12,106.52){\small$-$}
\put(-37.12,100.52){\small$-$}
\end{picture}
\begin{picture}(0,0)(1,0)\put(-60.1,-2.22){(c)}\end{picture}
\caption{Numerically optimized transport networks with $\tau(m)=1+0.05m$ for transport from (a) one mass source to five sinks, (b) four mass sources to four mass sinks, (c) a central mass source to 16 sinks around it.}
\label{fig:teaser}
\end{figure}


There exist multiple equivalent formulations for branched transportation problems.
One particular formulation models the mass flux $\flux$ as an unknown vector-valued Radon measure that describes the transport from $\mu_+$ to $\mu_-$. The cost functional of the flux, which one seeks to minimize, is defined as
\begin{equation}\label{eqn:GilbertEnergy}
E^{\mu_+,\mu_-}[\flux]=
\int_S\tau(|\sigma(x)|)\,\d\hdone(x)+\tau'(0)|\flux^\perp|(\R^n)
\end{equation}
if $\dive\flux=\mu_+-\mu_-$ in the distributional sense (so that indeed $\flux$ transports mass $\mu_+$ to $\mu_-$) and $\flux=\sigma\hdone\restr S+\flux^\perp$ for some countably $1$-rectifiable set $S\subset\R^n$, an $\hdone\restr S$-measurable function $\sigma:S\to\R^n$, and a diffuse part $\flux^\perp$ (composed of a Cantor measure and an absolutely continuous measure). Otherwise, $E^{\mu_+,\mu_-}[\flux]=\infty$.

In this article we devise phase field approximations of the functional $E^{\mu_+,\mu_-}$ in two space dimensions in the case of a piecewise affine transportation cost
\begin{equation*}
\tau(m)=\min\{\alpha_0 m,\alpha_1 m+\beta_1,\ldots,\alpha_N m+\beta_N\}
\end{equation*}
with positive parameters $\alpha_i,\beta_i$.
In fact, for a positive phase field parameter $\varepsilon$ we consider (a slightly improved but less intuitive version of) the phase field functional
\begin{multline*}
E_\varepsilon^{\mu_+,\mu_-}[\sigma,\varphi_1,\ldots,\varphi_N]
=\int_\Omega\min\left\{\alpha_0|\sigma(x)|,\min_{i=1,\ldots,N}\{\varphi_i(x)^2+\alpha_i^2\varepsilon^2/\beta_i\}\frac{|\sigma(x)|^2}{2\varepsilon}\right\}\\
+\sum_{i=1}^N\frac{\beta_i}2\left[\varepsilon|\nabla\varphi_i(x)|^2+\frac{(\varphi_i(x)-1)^2}\varepsilon\right]\,\d x
\end{multline*}
if $\dive\sigma=\mu_+^\varepsilon-\mu_-^\varepsilon$ and $E_\varepsilon^{\mu_+,\mu_-}[\sigma,\varphi_1,\ldots,\varphi_N]=\infty$ otherwise.
Our main result \Cref{thm:GammaLimit} shows that $E_\varepsilon^{\mu_+,\mu_-}$
$\Gamma$-converges in a certain topology to $E^{\mu_+,\mu_-}$ as $\varepsilon\searrow0$. Here the vector field $\sigma$ approximates the mass flux $\flux$, the auxiliary scalar phase fields $\varphi_1,\ldots,\varphi_N$ disappear in the limit, and $\mu_+^\varepsilon,\mu_-^\varepsilon$ are smoothed versions of $\mu_+,\mu_-$.
One motivation for our particular choice of $\tau$ is that this allows a phase field version of the urban planning model, the other motivation is that any concave cost $\tau$ can be approximated by a piecewise affine function.
Those phase field approximations then allow to find numerical approximations of optimal mass fluxes.

\subsection{Related work}
Phase field approximations represent a widely used tool to approach solutions to optimization problems depending on lower-dimensional sets. The concept takes advantage of the definition of $\Gamma$-convergence~\cite{Br98,Bra2002,DalMaso} to approximate singular energies by smoother elliptic functionals such that the associated minimizers converge as well. The term \textit{phase field} is due to Modica and Mortola~\cite{Mod_Mort} who study a rescaled version of the Cahn and Hilliard functional which models the phase transition between two immiscible liquids and which turns out to approximate the perimeter functional of a set. Subsequently a similar idea has been used by Ambrosio and Tortorelli in~\cite{Amb_Tort1,Amb_Tort2} to obtain an approximation to the Mumford-Shah functional~\cite{MuSh89}. Later on these techniques have been used as well in fracture theory and optimal partitions problems~\cite{Iur,ContiFocardiIurlano}.
More recently such phase field approximations have also been applied to branched transportation problems and the Steiner minimal tree problem (to find the graph of smallest length connecting a given set of points) which both feature high combinatorial complexity~\cite{Karp}. For instance, in~\cite{OuSa11} the authors propose an approximation to the branched transport problem based on the Modica--Mortola functional in which the phase field is replaced by a vector-valued function satisfying a divergence constraint. Similarly, in~\cite{MR3337998} Santambrogio et al.\ study a variational approximation to the Steiner minimal tree problem in which the connectedness constraint of the graph is enforced trough the introduction of a geodesic distance depending on the phase field. Our phase field approximations can be viewed as a generalization of recent work by two of the authors~\cite{ChaFerMer16,ChaFerMer17}, in which essentially~\eqref{eqn:GilbertEnergy} for $\tau(m)=\alpha m+\beta$ with $\alpha,\beta>0$ is approximated by an Ambrosio--Tortorelli-type functional defined as
\begin{equation*}
\tilde E_\varepsilon^{\mu_+,\mu_-}[\sigma,\varphi]=
\int_\Omega\frac1{2\varepsilon}\varphi(x)^2|\sigma(x)|^2+\frac\beta2\left(\varepsilon|\nabla\varphi(x)|^2+\frac1\varepsilon(\varphi(x)-1)^2\right)\,\d x
\end{equation*}
if  $\dive\sigma=\rho_\varepsilon*(\mu_+-\mu_-)$ for a smoothing kernel $\rho_\varepsilon$ and if $\varphi\geq\frac\alpha{\sqrt\beta}\varepsilon$ almost everywhere, and $\tilde E_\varepsilon^{\mu_+,\mu_-}[\sigma,\varphi]=\infty$ otherwise. The function $1-\varphi$ may be regarded as a smooth version of the characteristic function of a $1$-rectifiable set (the Steiner tree) whose total length is approximated by the Ambrosio--Tortorelli phase field term in parentheses. In addition, the first term in the integral forces this set to contain the support of a vector field $\sigma$ which encodes the mass flux from $\mu_+$ to $\mu_-$.

\subsection{Notation}
Throughout, $\Omega\subset\R^2$ is an open bounded domain with Lipschitz boundary.
The spaces of scalar and $\R^n$-valued continuous functions on the closure $\overline\Omega$ are denoted $\cont(\overline\Omega)$ and $\cont(\overline\Omega;\R^n)$
(for $m$ times continuously differentiable functions we use $\cont^m$)
and are equipped with the supremum norm $\|\cdot\|_\infty$.
Their topological duals are the spaces of scalar and $\R^n$-valued Radon measures (regular countably additive measures) $\meas(\overline\Omega)$ and $\meas(\overline\Omega;\R^n)$, equipped with the total variation norm $\|\cdot\|_\meas$.
Weak-* convergence in these spaces will be indicated by $\weakstarto$.
The subset $\prob(\overline\Omega)\subset\meas(\overline\Omega)$ shall be the space of probability measures (nonnegative Radon measures with total variation $1$).
The total variation measure of some Radon measure $\lambda$ will be denoted $|\lambda|$, and its Radon--Nikodym derivative with respect to $|\lambda|$ as $\frac\lambda{|\lambda|}$.
The restriction of $\lambda$ to a measurable set $A$ is abbreviated $\lambda\restr A$.
The pushforward of a measure $\lambda$ under a measurable map $T$ is denoted $\pushforward T\lambda$.
The standard Lebesgue and Sobolev spaces are indicated by $L^p(\Omega)$ and $W^{k,p}(\Omega)$, respectively;
if they map into $\R^n$ we write $L^p(\Omega;\R^n)$ and $W^{k,p}(\Omega;\R^n)$.
The associated norms are indicated by $\|\cdot\|_{L^p}$ and $\|\cdot\|_{W^{k,p}}$.
Finally, the $n$-dimensional Hausdorff measure is denoted $\hd^n$.\\

\medskip
\textbf{Structure of the paper:} In Section~\ref{Section2} we introduce the approximating energies and precisely state our results. Section~\ref{Section3} is devoted to the proof of the $\Gamma$-convergence result. Its first three subsections deal with the $\Gamma-\liminf$ inequality which is obtained via slicing, while the remaining two subsections prove an equicoercivity result and the $\Gamma-\limsup $ inequality. Finally, in Section~\ref{Section4} we introduce a numerical discretization and algorithmic scheme to perform some exemplary simulations of branched transportation networks.

\section{Model summary and $\Gamma$-convergence result}\label{Section2}
Here we state in more detail the considered variational model for transportation networks and its phase field approximation as well as the $\Gamma$-convergence result.

\subsection{Introduction of the energies}
Before we state the original energy and its phase field approximation, let us briefly recall the objects representing the transportation networks.
\begin{definition}[Divergence measure vector field and mass flux]
\begin{enumerate}
\item
A \emph{divergence measure vector field} is a measure $\flux\in\meas(\overline\Omega;\R^2)$,
whose weak divergence is a Radon measure, $\dive\flux\in\meas(\overline\Omega)$, where the weak divergence is defined as
\begin{equation*}
\int_{\R^2}\psi\,\d\dive\flux
=-\int_{\R^2}\nabla\psi\cdot\d\flux
\qquad\text{for all }\psi\in\cont^1(\R^2)\text{ with compact support.}
\end{equation*}
By \cite{Si07}, any divergence measure vector field $\flux$ can be decomposed as
\begin{equation*}
\flux=m_\flux\theta_\flux\hdone\restr S_\flux+\flux^\perp\,,
\end{equation*}
where $S_\flux\subset\Omega$ is countably $1$-rectifiable, $m_\flux:S_\flux\to[0,\infty)$ is $\hdone\restr S_\flux$-measurable, $\theta_\flux:S_\flux\to S^1$ is $\hdone\restr S_\flux$-measurable and orients the approximate tangent space of $S_\flux$,
and $\flux^\perp$ is $\hdone$-diffuse, that is, it is singular with respect to the one-dimensional Hausdorff measure on any countably $1$-rectifiable set (it is the sum of a Lebesgue-continuous and a Cantor part).
\item
A divergence measure vector field $\flux$ is \emph{polyhedral} if it is a finite linear combination
$$\flux=\sum_{i=1}^Mm_i\theta_i\hd^1\restr e_i$$
of vector-valued line measures, where $M\in\N$ and for $i=1,\ldots,M$ we have $m_i\in\R$, $e_i\subset\R^2$ a straight line segment, and $\theta_i$ its unit tangent vector.
\item
Given $\mu_+,\mu_-\in\prob(\overline\Omega)$ with compact support in $\Omega$, a \emph{mass flux} between $\mu_+$ and $\mu_-$ is a divergence measure vector field $\flux$ with $\dive\flux=\mu_+-\mu_-$.
The set of mass fluxes between $\mu_+$ and $\mu_-$ is denoted
\begin{equation*}
X^{\mu_+,\mu_-}
=\{\flux\in\meas(\overline\Omega;\R^2)\,|\,\dive\flux=\mu_+-\mu_-\}\,.
\end{equation*}
\end{enumerate}
\end{definition}

A mass flux between $\mu_+$ and $\mu_-$ can be interpreted as the material flow that transports the material from the initial mass distribution $\mu_+$ to the final mass distribution $\mu_-$.
Next we specify the cost functional for branched transportation networks for which we will propose a phase field approximation.

\begin{definition}[Cost functional]
\begin{enumerate}
\item
Given $N\in\N$ and $\alpha_0>\alpha_1>\dots>\alpha_N>0$, $0=\beta_0<\beta_1<\dots<\beta_N<\infty$, we define the piecewise affine \emph{transport cost} $\tau:[0,\infty)\to[0,\infty)$,
\begin{equation*}
\tau(m)=\min_{i=0,\dots,N}\{\alpha_im+\beta_i\}\,.
\end{equation*}
If $\alpha_0=\infty$ we interpret $\tau$ as
\begin{equation*}
\tau(m)=\begin{cases}0&\text{if }m=0\,,\\\min_{i=1,\dots,N}\{\alpha_im+\beta_i\}&\text{else.}\end{cases}
\end{equation*}
\item
We call $\mu_+,\mu_-\in\prob(\overline\Omega)$ an \emph{admissible source and sink} if they have compact support in $\Omega$
and in the case of $\alpha_0=\infty$ can additionally be written as a finite linear combination of Dirac masses.
\item
Given admissible $\mu_+,\mu_-\in\prob(\overline\Omega)$, we define the \emph{cost functional} $\E^{\mu_+,\mu_-}:X^{\mu_+,\mu_-}\to[0,\infty]$,
\begin{equation*}
\E^{\mu_+,\mu_-}[\flux]=
\int_{S_\flux}\tau(m_\flux(x))\,\d\hdone(x)+\tau'(0)|\flux^\perp|(\overline\Omega)
\end{equation*}
for $\alpha_0<\infty$ (above, $\tau'(0)=\alpha_0$ denotes the right derivative in $0$) and otherwise
\begin{equation*}
\E^{\mu_+,\mu_-}[\flux]=\begin{cases}
\int_{S_\flux}\tau(m_\flux(x))\,\d\hdone(x)&\text{if }\flux^\perp=0,\\
\infty&\text{else.}
\end{cases}
\end{equation*}
\end{enumerate}
\end{definition}

The phase field functional approximating $\E^{\mu_+,\mu_-}$ will depend on a vector field $\sigma$ approximating the mass flux and $N$ Ambrosio--Tortorelli phase fields $\varphi_1,\ldots,\varphi_N$ that indicate which term in the definition of $\tau$ is active.

\begin{definition}[Phase field cost functional]\label{def:phase fieldFunctional}
Let $\varepsilon>0$ and $\rho:\R^2\to[0,\infty)$ be a fixed smooth convolution kernel with support in the unit disk and $\int_{\R^2}\rho\,\d x=1$.
\begin{enumerate}
\item
Given admissible source and sink $\mu_+,\mu_-\in\prob(\overline\Omega)$, the \emph{smoothed source and sink} are
\begin{equation*}
\mu_\pm^\varepsilon=\rho_\varepsilon*\mu_\pm\,,\quad\text{where }\rho_\varepsilon=\tfrac1{\varepsilon^2}\rho(\tfrac\cdot\varepsilon)\,.
\end{equation*}
\item
The \emph{set of admissible functions} is
\begin{multline*}
X_\varepsilon^{\mu_+,\mu_-}
=\left\{(\sigma,\varphi_1,\ldots,\varphi_N)\in L^{2}(\Omega;\R^2)\times W^{1,2}(\Omega)^N\,\right|\\
\left.\,\dive\sigma=\mu_+^\varepsilon-\mu_-^\varepsilon,\,\varphi_1=\ldots=\varphi_N=1\text{ on }\partial\Omega\right\}\,.
\end{multline*}
\item
The \emph{phase field cost functional} is given by $\E_\varepsilon^{\mu_+,\mu_-}:X_\varepsilon^{\mu_+,\mu_-}\to[0,\infty)$,
\begin{equation*}
\E_\varepsilon^{\mu_+,\mu_-}[\sigma,\varphi_1,\ldots,\varphi_N]
=\int_\Omega\omega_\varepsilon\left(\alpha_0,\frac{\gamma_\varepsilon(x)}{\varepsilon},|\sigma(x)|\right)\,\d x
+\sum_{i=1}^N\beta_i\L_\varepsilon[\varphi_i]\,,
\end{equation*}
where we abbreviated (with some $p>1$)
\begin{align*}
\L_\varepsilon[\varphi]&=\frac12\int_\Omega\left[\varepsilon|\nabla\varphi(x)|^2+\frac{(\varphi(x)-1)^2}\varepsilon\right]\,\d x\,,\\
\gamma_\varepsilon(x)&=\min_{i=1,\ldots,N}\left\{\varphi_i(x)^2+\alpha_i^2\varepsilon^2/\beta_i\right\}\,,\\
\omega_\varepsilon\left(\alpha_0,\frac{\gamma_\varepsilon(x)}{\varepsilon},|\sigma(x)|\right)&=\left.\begin{cases}
\frac{\gamma_\varepsilon}{\varepsilon}\frac{|\sigma|^2}2&\text{if }|\sigma|\leq\frac{\alpha_0}{\gamma_\varepsilon/\varepsilon}\\
\alpha_0(|\sigma|-\frac{\alpha_0}{2\gamma_\varepsilon/\varepsilon})&\text{if }|\sigma|>\frac{\alpha_0}{\gamma_\varepsilon/\varepsilon}
\end{cases}\right\}+\varepsilon^p|\sigma(x)|^2&\text{for }\alpha_0<\infty\,,\\
\omega_\varepsilon\left(\alpha_0,\frac{\gamma_\varepsilon(x)}{\varepsilon},|\sigma(x)|\right)&=\frac{\gamma_\varepsilon}{\varepsilon}\frac{|\sigma|^2}2&\text{for }\alpha_0=\infty\,.
\end{align*}
\end{enumerate}
\end{definition}

Note that the pointwise minimum inside $\gamma_\varepsilon$ is well-defined almost everywhere, since all elements of $X_\varepsilon^{\mu_+,\mu_-}$ are Lebesgue-measurable.
Note also that for fixed phase fields $\varphi_1,\ldots,\varphi_N$ the phase field cost functional $\E_\varepsilon^{\mu_+,\mu_-}$ is convex in $\sigma$.
This ensures the existence of minimizers for $\E_\varepsilon^{\mu_+,\mu_-}$, which follows by a standard application of the direct method.

\begin{proposition}[Existence of minimizers to the phase field functional]
The phase field cost functional $\E_\varepsilon^{\mu_+,\mu_-}$ has a minimizer $(\sigma,\varphi_1,\ldots,\varphi_N)\in X_\varepsilon^{\mu_+,\mu_-}$.
\end{proposition}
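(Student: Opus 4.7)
The plan is to apply the direct method of the calculus of variations. Before running it I would check that $X_\varepsilon^{\mu_+,\mu_-}$ is nonempty with finite infimum: take $\varphi_i\equiv 1$, which satisfies the boundary condition, and construct $\sigma\in L^2(\Omega;\R^2)$ with $\dive\sigma=\mu_+^\varepsilon-\mu_-^\varepsilon$ by solving the Neumann problem $-\Delta u=\mu_+^\varepsilon-\mu_-^\varepsilon$ on $\Omega$ and setting $\sigma=\nabla u$. This is well-posed since the right-hand side is smooth with vanishing average (for $\varepsilon$ small enough the supports of $\mu_\pm^\varepsilon$ remain inside $\Omega$), and the resulting configuration has finite energy.

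Given a minimizing sequence $(\sigma^k,\varphi_1^k,\ldots,\varphi_N^k)$, I next extract compactness. The term $\beta_i\L_\varepsilon[\varphi_i^k]$ together with $\varphi_i^k-1\in W^{1,2}_0(\Omega)$ bounds $\varphi_i^k$ in $W^{1,2}(\Omega)$ via Poincaré's inequality. For $\sigma^k$ I would argue an $L^2$ bound case by case: when $\alpha_0<\infty$ the explicit summand $\varepsilon^p|\sigma|^2$ in $\omega_\varepsilon$ supplies it, and when $\alpha_0=\infty$ one observes that $\gamma_\varepsilon(x)\geq\min_i\alpha_i^2\varepsilon^2/\beta_i>0$, so $\omega_\varepsilon\geq c(\varepsilon)|\sigma|^2$ pointwise. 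Banach--Alaoglu and Rellich--Kondrachov then yield a subsequence along which $\varphi_i^k\weakto\varphi_i$ in $W^{1,2}(\Omega)$ (and $\varphi_i^k\to\varphi_i$ in $L^2(\Omega)$ and almost everywhere) and $\sigma^k\weakto\sigma$ in $L^2(\Omega;\R^2)$. Continuity of the trace preserves $\varphi_i=1$ on $\partial\Omega$, and weak continuity of $\dive$ on $L^2$ preserves the divergence constraint, so the limit lies in $X_\varepsilon^{\mu_+,\mu_-}$.

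It remains to verify lower semicontinuity of $\E_\varepsilon^{\mu_+,\mu_-}$ along this subsequence. Each Modica--Mortola term $\L_\varepsilon[\varphi_i]$ is weakly lower semicontinuous on $W^{1,2}(\Omega)$, since $\varphi\mapsto\|\nabla\varphi\|_{L^2}^2$ is weakly lower semicontinuous and the zeroth-order part is continuous under strong $L^2$ convergence. For the first integral, the integrand $\sigma\mapsto\omega_\varepsilon(\alpha_0,\gamma_\varepsilon(x)/\varepsilon,|\sigma|)$ is convex (quadratic-then-affine in $|\sigma|$, joined in a $C^1$ way, plus an additional quadratic in the finite-$\alpha_0$ case, composed with $\sigma\mapsto|\sigma|$), continuous in the argument $\gamma_\varepsilon/\varepsilon$, while $\gamma_\varepsilon^k\to\gamma_\varepsilon$ almost everywhere as a pointwise minimum of continuous functions of the $\varphi_i^k$. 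A standard lower semicontinuity theorem for normal integrands convex in the vector variable (Ioffe's theorem) then delivers the required inequality, exhibiting $(\sigma,\varphi_1,\ldots,\varphi_N)$ as a minimizer.

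The main obstacle I anticipate is precisely this last lower semicontinuity step, because $\sigma^k$ converges only weakly while the coefficient $\gamma_\varepsilon^k$ inside the integrand itself varies with $k$; convexity in $\sigma$ for each fixed $\gamma_\varepsilon$, paired with the almost-everywhere convergence of $\gamma_\varepsilon^k$, is exactly the structure that makes Ioffe's theorem applicable. All other steps are routine.
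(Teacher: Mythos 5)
Your proposal is correct and follows essentially the same direct-method argument as the paper: nonempty domain via the Neumann problem, coercivity in $L^2(\Omega;\R^2)\times W^{1,2}(\Omega)^N$, closedness of the constraint set, and lower semicontinuity exploiting convexity of $\omega_\varepsilon$ in $\sigma$ together with a.e.\ convergence of $\gamma_\varepsilon^k$. The only difference is cosmetic: where you invoke Ioffe's theorem for normal integrands, the paper reproves that special case by hand via Mazur's lemma and Fatou's lemma, using the monotonicity of $\omega_\varepsilon$ in its second argument and its convexity in the last.
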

\begin{proof}
The functional is bounded below by $0$ and has a nonempty domain.
Indeed, choose $\hat\varphi_1\equiv\ldots\equiv\hat\varphi_N\equiv1$ and $\hat\sigma=\nabla\psi$, where $\psi$ solves $\Delta\psi=\mu_+^\varepsilon-\mu_-^\varepsilon$ in $\Omega$ with Neumann boundary conditions $\nabla\psi\cdot\nu_{\partial\Omega}=0$, $\nu_{\partial\Omega}$ being the unit outward normal to $\partial\Omega$.
(Since $\int_\Omega\mu_+^\varepsilon-\mu_-^\varepsilon\,\d x=0$, a solution $\psi$ exists and lies in $W^{2,2}(\Omega)$ by standard elliptic regularity.)
Obviously, $(\hat\sigma,\hat\varphi_1,\ldots,\hat\varphi_N)\in X_\varepsilon^{\mu_+,\mu_-}$ with $\E_\varepsilon^{\mu_+,\mu_-}[\hat\sigma,\hat\varphi_1,\ldots,\hat\varphi_N]<\infty$.

Now consider a minimizing sequence $(\sigma^k,\varphi_1^k,\ldots,\varphi_N^k)\in X_\varepsilon^{\mu_+,\mu_-}$, $k\in\N$,
with $$\E_\varepsilon^{\mu_+,\mu_-}[\sigma^k,\varphi_1^k,\ldots,\varphi_N^k]\to\inf\E_\varepsilon^{\mu_+,\mu_-}$$ monotonically as $k\to\infty$.
Since $\E_\varepsilon^{\mu_+,\mu_-}$ is coercive with respect to $H=L^{2}(\Omega;\R^2)\times W^{1,2}(\Omega)^N$,
$(\sigma^k,\varphi_1^k,\ldots,\varphi_N^k)$ is uniformly bounded in $H$ so that we can extract a weakly converging subsequence, still indexed by $k$ for simplicity,
$$(\sigma^k,\varphi_1^k,\ldots,\varphi_N^k)\weakto(\sigma,\varphi_1,\ldots,\varphi_N)\,.$$
Due to the closedness of $X_\varepsilon^{\mu_+,\mu_-}$ with respect to weak convergence in $H$ we have $(\sigma,\varphi_1,\ldots,\varphi_N)\in X_\varepsilon^{\mu_+,\mu_-}$.
Note that the integrand of $\E_\varepsilon^{\mu_+,\mu_-}$ is convex in $\sigma(x)$ and the $\nabla\varphi_i(x)$ as well as continuous in $\sigma(x)$ and the $\varphi_i(x)$,
thus $\E_\varepsilon^{\mu_+,\mu_-}$ is lower semi-continuous along the sequence.
Indeed, consider a subsequence along which each term $\L_\varepsilon[\varphi_i^k]$ converges
and along which the $\varphi_i^k$ converge pointwise almost everywhere (so that also $\gamma_\varepsilon^k(x)=\min_{i=1,\ldots,N}\left\{\varphi_i^k(x)^2+\alpha_i^2\varepsilon^2/\beta_i\right\}$ converges for almost all $x\in\Omega$).
By Mazur's lemma, a sequence of convex combinations $\sum_{j=k}^{m_k}\lambda_j^k\sigma^j$ of the $\sigma^k$ converges strongly (and up to another subsequence again pointwise)
so that by Fatou's lemma we have
\begin{align*}
\inf\E_\varepsilon^{\mu_+,\mu_-}
&=\lim_{k\to\infty}\E_\varepsilon^{\mu_+,\mu_-}[\sigma^k,\varphi_1^k,\ldots,\varphi_N^k]\\
&=\lim_{k\to\infty}\int_\Omega\omega_\varepsilon\left(\alpha_0,\frac{\gamma_\varepsilon^k(x)}{\varepsilon},|\sigma^k(x)|\right)\,\d x
+\sum_{i=1}^N\beta_i\lim_{k\to\infty}\L_\varepsilon[\varphi_i^k]\\
&\geq\lim_{k\to\infty}\sum_{j=k}^{m_k}\lambda_j^k\int_\Omega\omega_\varepsilon\left(\alpha_0,\frac{\gamma_\varepsilon^j(x)}{\varepsilon},|\sigma^j(x)|\right)\,\d x
+\sum_{i=1}^N\beta_i\L_\varepsilon[\varphi_i]\\
&\geq\int_\Omega\liminf_{k\to\infty}\sum_{j=k}^{m_k}\lambda_j^k\omega_\varepsilon\left(\alpha_0,\frac{\gamma_\varepsilon^j(x)}{\varepsilon},|\sigma^j(x)|\right)\,\d x
+\sum_{i=1}^N\beta_i\L_\varepsilon[\varphi_i]\\
&\geq\int_\Omega\liminf_{k\to\infty}\sum_{j=k}^{m_k}\lambda_j^k\omega_\varepsilon\left(\alpha_0,\inf_{i=k,\ldots,m_k}\frac{\gamma_\varepsilon^i(x)}{\varepsilon},|\sigma^j(x)|\right)\,\d x
+\sum_{i=1}^N\beta_i\L_\varepsilon[\varphi_i]\\
&\geq\int_\Omega\liminf_{k\to\infty}\omega_\varepsilon\left(\alpha_0,\inf_{i=k,\ldots,m_k}\frac{\gamma_\varepsilon^i(x)}{\varepsilon},\sum_{j=k}^{m_k}\lambda_j^k|\sigma^j(x)|\right)\,\d x
+\sum_{i=1}^N\beta_i\L_\varepsilon[\varphi_i]\\
&=\E_\varepsilon^{\mu_+,\mu_-}[\sigma,\varphi_1,\ldots,\varphi_N]\,,
\end{align*}
where we exploited the weak lower semi-continuity of $\L_\varepsilon$, the monotonicity of $\omega_\varepsilon\left(\alpha_0,\frac{\gamma_\varepsilon(x)}{\varepsilon},|\sigma(x)|\right)$ in its second argument, its convexity in its last argument, and its continuity in its latter two arguments.
\end{proof}

\begin{remark}[Regularization of $\sigma$]
Note that the phase field cost functional $\E_\varepsilon^{\mu_+,\mu_-}$ is $L^2(\Omega;\R^2)$-coercive in $\sigma$,
which is essential to have sequentially weak compactness of subsets of $X_\varepsilon^{\mu_+,\mu_-}$ with finite cost (and as a consequence existence of minimizers).
For $\alpha_0<\infty$ this is ensured by the regularization term $\varepsilon^p|\sigma|^2$ (which has no other purpose).
Without it, the functional would only feature weak-$*$ coercivity for $\sigma$ in $\meas(\overline\Omega;\R^2)$,
however, the integral $\int_\Omega\omega_\varepsilon\big(\alpha_0,\frac{\gamma_\varepsilon(x)}\varepsilon,|\sigma(x)|\big)\,\d x$ with $\gamma_\varepsilon$ Lebesgue-measurable
would in general not be well-defined for $\sigma\in\meas(\overline\Omega;\R^2)$.
\end{remark}

\begin{remark}[Motivation of $\omega_\varepsilon\left(\alpha_0,\frac{\gamma_\varepsilon(x)}{\varepsilon},|\sigma(x)|\right)$ via relaxation]\label{rem:relaxation}
Keeping the phase fields $\varphi_1,\ldots,\varphi_N$ fixed and ignoring the regularizing term $\varepsilon^p|\sigma|^2$,
the integrand $\omega_\varepsilon(\alpha_0,\frac{\gamma_\varepsilon}\varepsilon,|\sigma|)$ is the convexification in $\sigma$ of
\begin{equation*}
\min\left\{\alpha_0|\sigma|,\frac{(\varphi_1^2+\alpha_1^2\varepsilon^2/\beta_1)|\sigma|^2}{2\varepsilon},\ldots,\frac{(\varphi_N^2+\alpha_N^2\varepsilon^2/\beta_N)|\sigma|^2}{2\varepsilon}\right\}
=\min\left\{\alpha_0|\sigma|,\frac{\gamma_\varepsilon}\varepsilon\frac{|\sigma|^2}{2}\right\}\,,
\end{equation*}
which shows the intuition of the phase field functional much clearer.
Indeed, the minimum over $N+1$ terms parallels the minimum in the definition of $\tau$,
and the $i$\textsuperscript{th} term for $i=0,\ldots,N$ describes (part of) the transportation cost $\alpha_im+\beta_i$.
However, since the above is not convex with respect to $\sigma$, a functional with this integrand would not be weakly lower semi-continuous in $\sigma$
and consequently possess no minimizers in general.
Taking the lower semi-continuous envelope corresponds to replacing the above by $\omega_\varepsilon(\alpha_0,\frac{\gamma_\varepsilon}\varepsilon,|\sigma|)$
(note that this only ensures existence of minimizers, but will not change the $\Gamma$-limit of the phase field functional).
\end{remark}

\subsection{Statement of $\Gamma$-convergence and equi-coercivity}
Let us extend both $\E^{\mu_+,\mu_-}$ and $\E_\varepsilon^{\mu_+,\mu_-}$ to $\meas(\overline\Omega;\R^2)\times L^1(\Omega)^N$ via
\begin{align*}
E^{\mu_+,\mu_-}[\sigma,\varphi_1,\ldots,\varphi_N]&=\begin{cases}
\E^{\mu_+,\mu_-}[\sigma]&\text{if }\sigma\in X^{\mu_+,\mu_-}\text{ and }\varphi_1=\ldots=\varphi_N=1\text{ almost everywhere},\\
\infty&\text{else,}
\end{cases}\\
E_\varepsilon^{\mu_+,\mu_-}[\sigma,\varphi_1,\ldots,\varphi_N]&=\begin{cases}
\E_\varepsilon^{\mu_+,\mu_-}[\sigma,\varphi_1,\ldots,\varphi_N]&\text{if }(\sigma,\varphi_1,\ldots,\varphi_N)\in X_\varepsilon^{\mu_+,\mu_-},\\
\infty&\text{else.}
\end{cases}
\end{align*}

We have the following $\Gamma$-convergence result.

\begin{theorem}[Convergence of phase field cost functional]\label{thm:GammaLimit}
For admissible $\mu_+,\mu_-\in\prob(\overline\Omega)$ we have $$\Gamma-\lim_{\varepsilon\to0}E_\varepsilon^{\mu_+,\mu_-}=E^{\mu_+,\mu_-}\,,$$ where the $\Gamma$-limit is with respect to weak-$*$ convergence in $\meas(\overline\Omega;\R^2)$ and strong convergence in $L^1(\Omega)^N$.
\end{theorem}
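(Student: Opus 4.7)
The plan is to follow the classical three-step $\Gamma$-convergence strategy: prove the $\Gamma$-$\liminf$ inequality, establish equicoercivity of energy-bounded sequences (so that along the $\Gamma$-convergence the relevant compactness is automatic), and construct an explicit recovery sequence to obtain the $\Gamma$-$\limsup$ inequality.

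For the $\Gamma$-$\liminf$ inequality I would proceed by slicing. Fix a sequence $(\sigma^\varepsilon,\varphi_1^\varepsilon,\ldots,\varphi_N^\varepsilon)$ with uniformly bounded phase field energy converging in the topology of the theorem to $(\flux,1,\ldots,1)$. For each direction $\theta\in S^1$, disintegrate the energy along lines parallel to $\theta$ via Fubini. On a generic one-dimensional slice, $\varphi_i^\varepsilon$ still displays the Modica--Mortola structure $\frac{\beta_i}{2}\int[\varepsilon(\varphi_i')^2+(\varphi_i-1)^2/\varepsilon]$, which forces concentration of the set $\{\varphi_i^\varepsilon\approx0\}$ and contributes $\beta_i$ per detected transition; simultaneously, the coupling term $\frac{\gamma_\varepsilon}{\varepsilon}\frac{|\sigma|^2}{2}$ together with the sliced divergence constraint forces that at each transition the integrated $\sigma$-mass matches (in the limit) the projected mass $m$ crossing the slice, yielding an extra $\alpha_i m$ after balancing the two terms inside $\gamma_\varepsilon$ as in \Cref{rem:relaxation}. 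Since $\gamma_\varepsilon$ is the pointwise minimum over $i$, each transition autonomously picks the index minimizing $\alpha_i m+\beta_i$, reproducing exactly $\tau(m)$ on the limiting rectifiable set $S_\flux$. Recomposing by integration over $\theta$ (and using either an averaging argument over directions or a localization by tangent cones to $S_\flux$) delivers the full liminf on the rectifiable part, while the regularization term $\varepsilon^p|\sigma|^2$ combined with the linear growth of $\omega_\varepsilon$ at large $|\sigma|$ yields the $\alpha_0|\flux^\perp|$ contribution from the diffuse component.

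Equicoercivity is comparatively cheap: a uniform bound on $\L_\varepsilon[\varphi_i^\varepsilon]$ gives $\|\varphi_i^\varepsilon-1\|_{L^2(\Omega)}^2\leq 2\varepsilon\beta_i^{-1}E_\varepsilon$, so $\varphi_i^\varepsilon\to1$ in $L^2$ and hence in $L^1$; and $\omega_\varepsilon$ grows at least linearly in $|\sigma|$ with slope $\alpha_0$ (more when including the regularization), which together with the finite energy bound yields a uniform bound on $\|\sigma^\varepsilon\|_{\meas}$ and therefore weak-$*$ compactness in $\meas(\overline\Omega;\R^2)$.

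For the $\Gamma$-$\limsup$ inequality I would first reduce by density to polyhedral mass fluxes $\flux=\sum_{k=1}^M m_k\theta_k\hdone\restr e_k$, a classical approximation result in branched transport together with continuity of $\E^{\mu_+,\mu_-}$ along such sequences. On each segment $e_k$, select the index $i_k\in\{1,\ldots,N\}$ realizing $\tau(m_k)=\alpha_{i_k}m_k+\beta_{i_k}$ (or exploit the term $\alpha_0|\sigma|$ when $\alpha_0 m_k$ is smallest). In a tubular neighborhood of $e_k$, take $\varphi_{i_k}^\varepsilon$ to be the standard Ambrosio--Tortorelli optimal profile transverse to $e_k$, which contributes exactly $\beta_{i_k}$ per unit length; set the other $\varphi_j^\varepsilon\equiv1$ near $e_k$; and define $\sigma^\varepsilon$ as a transverse mollification of $m_k\theta_k\hdone\restr e_k$ on the scale matching that profile, optimized so that the coupling term integrates to $\alpha_{i_k}m_k$ per unit length, in accordance with the convexification identity from \Cref{rem:relaxation}. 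A small correction of negligible cost is added to accommodate junctions, segment endpoints and the mismatch between $\mu_\pm$ and the smoothed $\mu_\pm^\varepsilon$ in the divergence constraint.

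The principal obstacle will be the one-dimensional slicing analysis: I must decouple the contributions of the $N$ phase fields so that at each limiting transition exactly the $\varphi_i$ attaining the pointwise minimum in $\gamma_\varepsilon$ drives the asymptotics while the others stay inactive, and I must produce a tight lower bound on the coupled Modica--Mortola/transport energy over an interval that identifies the asymptotic per-transition cost with $\tau(m)$ rather than with a loose sum of contributions. A secondary difficulty is the clean separation of the rectifiable and diffuse parts of $\flux$ in the liminf step, which requires using the linear regime of $\omega_\varepsilon$ for large $|\sigma|$ on scales where $\gamma_\varepsilon/\varepsilon$ stays bounded.
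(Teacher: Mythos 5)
Your proposal follows essentially the same route as the paper: a sliced one-dimensional lower bound combining the Modica--Mortola transition cost $\beta_i$ with the coupling term calibrated to give $\alpha_i m$ (recombined over a dense set of directions via a supremum/localization lemma), equicoercivity from the $\frac{1}{\varepsilon}(\varphi_i-1)^2$ term and the linear growth of $\omega_\varepsilon$, and a recovery sequence built on polyhedral approximations with optimal Ambrosio--Tortorelli profiles and divergence corrections. One small correction: the $\alpha_0|\flux^\perp|$ contribution in the liminf comes solely from the linear branch of $\omega_\varepsilon$ through the sliced analysis, not from the regularization term $\varepsilon^p|\sigma|^2$, whose only role is to ensure $L^2$-coercivity and existence of minimizers at fixed $\varepsilon$.
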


The proof of this result is provided in the next section.
Together with the following equicoercivity statement, whose proof is also deferred to the next section,
we have that minimizers of the phase field cost functional $E_\varepsilon^{\mu_+,\mu_-}$ approximate minimizers of the original cost functional $E^{\mu_+,\mu_-}$.

\begin{theorem}[Equicoercivity]\label{thm:equicoercivity}
For $\varepsilon\to0$ let $(\sigma^\varepsilon,\varphi_1^\varepsilon,\ldots,\varphi_N^\varepsilon)$ be a sequence
with uniformly bounded phase field cost functional $E_\varepsilon^{\mu_+,\mu_-}[\sigma^\varepsilon,\varphi_1^\varepsilon,\ldots,\varphi_N^\varepsilon]<C<\infty$.
Then, along a subsequence, $\sigma^\varepsilon\weakstarto\sigma$ in $\meas(\overline\Omega;\R^2)$ for some $\sigma\in\meas(\overline\Omega;\R^2)$ and $\varphi_i^\varepsilon\to1$ in $L^1(\Omega)$, $i=1,\ldots,N$.

As a consequence, if $\mu_+,\mu_-\in\prob(\overline\Omega)$ are admissible and such that there exists $\flux\in X^{\mu_+,\mu_-}$ with $\E^{\mu_+,\mu_-}[\flux]<\infty$,
then any sequence of minimizers of $E_\varepsilon^{\mu_+,\mu_-}$ contains a subsequence converging to a minimizer of $E^{\mu_+,\mu_-}$ as $\varepsilon\to0$.
\end{theorem}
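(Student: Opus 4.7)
The first assertion $\varphi_i^\varepsilon \to 1$ in $L^1(\Omega)$ is immediate: from $\beta_i \L_\varepsilon[\varphi_i^\varepsilon] \leq C$ one reads off $\int_\Omega (\varphi_i^\varepsilon - 1)^2 \,\d x \leq 2C\varepsilon/\beta_i$, so $\varphi_i^\varepsilon \to 1$ in $L^2(\Omega)$ and hence also in $L^1(\Omega)$.

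For the weak-$*$ compactness of $\sigma^\varepsilon$ in $\meas(\overline\Omega;\R^2)$ it suffices, by Banach--Alaoglu, to bound $\|\sigma^\varepsilon\|_{L^1(\Omega;\R^2)}$ uniformly. The plan hinges on the auxiliary uniform estimate
\begin{equation*}
\int_\Omega \frac{\varepsilon}{\gamma_\varepsilon(x)}\,\d x \leq K\,.
\end{equation*}
Fixing $\theta \in (0,1)$ and setting $A_\theta^\varepsilon = \{x \in \Omega : \min_i |\varphi_i^\varepsilon(x)| > \theta\}$, on $A_\theta^\varepsilon$ the trivial pointwise inequality $\gamma_\varepsilon \geq \min_i(\varphi_i^\varepsilon)^2 \geq \theta^2$ yields $\int_{A_\theta^\varepsilon} \varepsilon/\gamma_\varepsilon\,\d x \leq \varepsilon|\Omega|/\theta^2 \to 0$. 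On the complement, a Chebyshev-type estimate gives $|\Omega \setminus A_\theta^\varepsilon| \leq \sum_i |\{|\varphi_i^\varepsilon| \leq \theta\}| \leq \sum_i 2C\varepsilon/(\beta_i(1-\theta)^2)$, and combining with the pointwise floor $\gamma_\varepsilon \geq c\varepsilon^2$ with $c = \min_i \alpha_i^2/\beta_i > 0$ produces $\int_{\Omega\setminus A_\theta^\varepsilon}\varepsilon/\gamma_\varepsilon\,\d x \leq \sum_i 2C/(c\beta_i(1-\theta)^2)$, independent of $\varepsilon$. From this, the $L^1$ bound on $\sigma^\varepsilon$ then follows in two slightly different ways depending on $\alpha_0$. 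For $\alpha_0 = \infty$, Cauchy--Schwarz applied to the factorisation $|\sigma^\varepsilon| = \sqrt{\gamma_\varepsilon/\varepsilon}\,|\sigma^\varepsilon| \cdot \sqrt{\varepsilon/\gamma_\varepsilon}$ yields
\begin{equation*}
\int_\Omega |\sigma^\varepsilon|\,\d x \leq \left(\int_\Omega \frac{\gamma_\varepsilon|\sigma^\varepsilon|^2}{\varepsilon}\,\d x\right)^{1/2}\left(\int_\Omega \frac{\varepsilon}{\gamma_\varepsilon}\,\d x\right)^{1/2} \leq \sqrt{2CK}\,.
\end{equation*}
For $\alpha_0 < \infty$, I would instead use the Young/Fenchel-type pointwise inequality $\omega_\varepsilon(\alpha_0,\gamma_\varepsilon/\varepsilon,|\sigma^\varepsilon|) \geq \alpha_0|\sigma^\varepsilon| - \alpha_0^2\varepsilon/(2\gamma_\varepsilon)$, which follows from AM--GM in the quadratic branch and is tight in the linear branch, and integrate to obtain $\alpha_0\int_\Omega |\sigma^\varepsilon|\,\d x \leq C + \alpha_0^2 K/2$.

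The consequence for minimizers then reduces to the standard $\Gamma$-convergence machinery: the existence of some finite-energy $\flux \in X^{\mu_+,\mu_-}$ combined with the $\Gamma$-$\limsup$ half of \Cref{thm:GammaLimit} supplies a recovery sequence with uniformly bounded $E_\varepsilon^{\mu_+,\mu_-}$, so minimizer sequences of $E_\varepsilon^{\mu_+,\mu_-}$ have uniformly bounded energy; the equicoercivity just proved extracts a subsequence converging to some limit $(\sigma,1,\ldots,1)$, and the $\Gamma$-$\liminf$ inequality identifies $\sigma$ as a minimizer of $E^{\mu_+,\mu_-}$. I expect the uniform bound on $\int_\Omega \varepsilon/\gamma_\varepsilon\,\d x$ to be the only genuinely delicate step: the ``bad'' set where some $\varphi_i^\varepsilon$ drops below $\theta$ has measure only $O(\varepsilon)$ while $1/\gamma_\varepsilon$ may blow up as fast as $1/\varepsilon^2$ there, so the $O(1)$ outcome depends crucially on the $\alpha_i^2\varepsilon^2/\beta_i$ floor built into $\gamma_\varepsilon$ by definition.
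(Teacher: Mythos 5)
Your proof is correct and follows essentially the same route as the paper's: the key step in both is the uniform bound on $\int\varepsilon/\gamma_\varepsilon$ (the paper bounds the equivalent quantity $\int_{K_i^\varepsilon}2\varepsilon/((\varphi_i^\varepsilon)^2+\alpha_i^2\varepsilon^2/\beta_i)$ separately on each cost domain) obtained by thresholding the phase fields, using the Modica--Mortola term on the small bad set together with the $\alpha_i^2\varepsilon^2/\beta_i$ floor, followed by Cauchy--Schwarz/H\"older against the flux term. Your global organization via $\gamma_\varepsilon$ and the pointwise Fenchel-type bound $\omega_\varepsilon\geq\alpha_0|\sigma|-\alpha_0^2\varepsilon/(2\gamma_\varepsilon)$ for $\alpha_0<\infty$ are only cosmetic variations on the paper's per-domain estimates.
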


\begin{remark}[Phase field boundary conditions]
Recall that we imposed boundary conditions $\varphi_i=1$ on $\partial\Omega$.
Without those, the recovery sequence from the following section could easily be adapted such that all full phase field profiles near the boundary will be replaced by half, one-sided phase field profiles.
It is straightforward to show that the resulting limit functional would become
\begin{multline*}
\int_{S_\flux\cap\Omega}\min\{\alpha_0m_\flux,\alpha_1m_\flux+\beta_1,\ldots,\alpha_Nm_\flux+\beta_N\}\\
+\int_{S_\flux\cap\partial\Omega}\min\{\alpha_0m_\flux,\alpha_1m_\flux+\beta_1/2,\ldots,\alpha_Nm_\flux+\beta_N/2\}
+\alpha_0|\flux^\perp|(\overline\Omega)\,,
\end{multline*}
where fluxes along the boundary are cheaper and thus preferred.
\end{remark}

\begin{remark}[Divergence measure vector fields and flat chains]\label{rem:flatChains}
Any divergence measure vector field can be identified with a flat $1$-chain or a locally normal $1$-current (see for instance \cite[Sec.\,5]{Si07} or \cite[Rem.\,2.29(3)]{BrWi17}; comprehensive references for flat chains and currents are \cite{Wh57,Fe69}).
Furthermore, for a sequence $\sigma^j$, $j\in\N$, of divergence measure vector fields with uniformly bounded $\|\dive\sigma^j\|_\meas$, weak-$*$ convergence is equivalent to convergence of the corresponding flat $1$-chains with respect to the flat norm \cite[Rem.\,2.29(4)]{BrWi17}.
Analogously, scalar Radon measures of finite total variation and bounded support can be identified with flat $0$-chains or locally normal $0$-currents \cite[Thm.\,2.2]{Wh99},
and for a bounded sequence of compactly supported scalar measures, weak-$*$ convergence is equivalent to convergence with respect to the flat norm of the corresponding flat $0$-chains.

From the above it follows that in \Cref{thm:GammaLimit,thm:equicoercivity} we may replace weak-$*$ convergence by convergence with respect to the flat norm.
Indeed, for both results it suffices to consider sequences $(\sigma^\varepsilon,\varphi_1^\varepsilon,\ldots,\varphi_N^\varepsilon)$ with uniformly bounded cost $E_\varepsilon^{\mu_+,\mu_-}$.
For those we have uniformly bounded $\|\sigma^\varepsilon\|_\meas$ (by \Cref{thm:equicoercivity}) as well as uniformly bounded $\|\dive\sigma^\varepsilon\|_\meas=\|\mu_+^\varepsilon-\mu_-^\varepsilon\|_\meas$
so that weak-$*$ and flat norm convergence are equivalent.
\end{remark}

\section{The $\Gamma$-limit of the phase field functional}\label{Section3}
In this section we prove the $\Gamma$-convergence result. As is canonical, we begin with the $\liminf$-inequality, after which we prove the $\limsup$-inequality as well as equicoercivity.

\subsection{The $\Gamma-\liminf$ inequality for the dimension-reduced problem}
Here we consider the energy reduced to codimension-$1$ slices of the domain $\Omega$.
In our particular case of a two-dimensional domain, each slice is just one-dimensional, which will simplify notation a little (the procedure would be the same for higher codimensions, though).
The reduced functional depends on the (scalar) normal flux $\vartheta$ through the slice as well as the scalar phase fields $\varphi_1,\ldots,\varphi_N$ restricted to the slice.

\begin{definition}[Reduced functionals]\label{def:reducedFunctionals}
Let $I\subset\R$ be an open interval.
\begin{enumerate}
\item
The decomposition of a measure $\vartheta\in\meas(\overline I)$ into its atoms and the remainder is denoted
\begin{equation*}
\vartheta=m_\vartheta\hd^0\restr S_\vartheta+\vartheta^\perp\,,
\end{equation*}
where $S_\vartheta\subset\overline I$ is the set of atoms of $\vartheta$, $m_\vartheta:S_\vartheta\to\R$ is $\hd^0\restr S_\vartheta$-measurable, and $\vartheta^\perp$ contains no atoms.
\item
We define the \emph{reduced cost functional} $\F[\cdot;I]:\meas(\overline I)\to[0,\infty)$,
\begin{equation*}
\F[\vartheta;I]=\sum_{x\in S_\vartheta\cap I}\tau(|m_\vartheta(x)|)+\tau'(0)|\vartheta^\perp|(I)\,
\end{equation*}
for $\alpha_0<\infty$ (above, $\tau'(0)=\alpha_0$ denotes the right derivative in $0$) and otherwise
\begin{equation*}
\F[\vartheta;I]=\begin{cases}
\sum_{x\in S_\vartheta\cap I}\tau(|m_\vartheta(x)|)&\text{if }\vartheta^\perp=0,\\
\infty&\text{else.}
\end{cases}
\end{equation*}
Its extension to $\meas(\overline I)\times L^1(I)^N$ is $\f[\cdot;I]:\meas(\overline I)\times L^1(I)^N\to[0,\infty)$,
\begin{equation*}
\f[\vartheta,\varphi_1,\ldots,\varphi_N;I]=\begin{cases}
\F[\vartheta;I]&\text{if }\varphi_1=\ldots=\varphi_N=1\text{ almost everywhere,}\\
\infty&\text{else.}
\end{cases}
\end{equation*}
\item
For any $(\vartheta,\varphi_1,\ldots,\varphi_N)\in L^2(I)\times W^{1,2}(I)^N$ we define the \emph{reduced phase field functional} on $I$ as
 \begin{align*}
\F_\varepsilon[\vartheta,\varphi_1,\ldots,\varphi_N;I]
&=\int_I\omega_\varepsilon\left(\alpha_0,\tfrac{\gamma_\varepsilon(x)}{\varepsilon},|\vartheta(x)|\right)\,\d x
+\sum_{i=1}^N\beta_i\L_\varepsilon[\varphi_i;I]\,,\\
\L_\varepsilon[\varphi;I]&=\frac12\int_I\left[\varepsilon|\varphi'(x)|^2+\frac{(\varphi(x)-1)^2}\varepsilon\right]\,\d x\,,
\end{align*}
with $\omega_\varepsilon$ and $\gamma_\varepsilon$ from \Cref{def:phase fieldFunctional}.
Likewise we define $\f_\varepsilon[\cdot;I]:\meas(\overline I)\times L^1(I)^N\to[0,\infty)$,
\begin{equation*}
\f_\varepsilon[\vartheta,\varphi_1,\ldots,\varphi_N;I]
=\begin{cases}
\F_\varepsilon[\vartheta,\varphi_1,\ldots,\varphi_N;I]&\text{if }(\vartheta,\varphi^1,\dots,\varphi^N)\in L^2(I)\times W^{1,2}(I)^{N},\\
\infty&\text{else.}
\end{cases}
\end{equation*}
\end{enumerate}
\end{definition}

For notational convenience, we next introduce the sets $K_i^\varepsilon$ on which the pointwise minimum inside $\F_\varepsilon$ (or also $\E_\varepsilon^{\mu_+,\mu_-}$) is realized by the $i$\textsuperscript{th} element.

\begin{definition}[Cost domains]
For given $(\vartheta,\varphi_1,\ldots,\varphi_N)\in L^2(I)\times W^{1,2}(I)^{N}$ we set
\begin{align*}
K_0^\varepsilon
&=K_0^\varepsilon(\vartheta,\varphi_1,\ldots,\varphi_N;I)
= \left\{x\in I\,\middle|\,|\vartheta(x)|>\frac{\alpha_0\varepsilon}{\gamma_\varepsilon}\right\}\,,\\
K_i^\varepsilon
&=K_i^\varepsilon(\vartheta,\varphi_1,\ldots,\varphi_N;I)
=\left\{x\in I\setminus\textstyle\bigcup_{j=0}^{i-1}K_j^\varepsilon\,\middle|\,\varphi_i(x)^2+\tfrac{\alpha_i^2\varepsilon^2}{\beta_i}=\gamma_\varepsilon(x)\right\}\,,
\;i=1,\ldots,N\,.
\end{align*}
The sets are analogously defined for $(\sigma,\varphi_1,\ldots,\varphi_N)\in L^2(\Omega;\R^2)\times W^{1,2}(\Omega)^N$,
where we use the same notation (which case is referred to will be clear from the context).
\end{definition}

We now show the following lower bound on the energy, from which the $\Gamma-\liminf$ inequality for the dimension-reduced situation will automatically follow.

\begin{proposition}[Lower bound on reduced phase field functional]\label{thm:lowerBoundReduced}
Let $I=(a,b)\subset\R$ and $0\leq\delta\leq\eta\leq1$.
Let $I_\eta\subset\{x\in I\,|\,\varphi_1(x),\ldots,\varphi_N(x)\geq\eta\}$, and denote the collection of connected components of $I\setminus I_\eta$ by $\mathcal C_\eta$.
Furthermore define the subcollection of connected components $\mathcal C_\eta^\geq=\{C\in\mathcal C_\eta\,|\,\inf_C\varphi_1,\ldots,\inf_C\varphi_N\geq\delta\}$ and $C^\geq=\bigcup_{C\in\mathcal C_\eta^\geq}C$.
Finally assume $\varphi_i(a),\varphi_i(b)\geq\eta$ for all $i=1,\ldots,N$.
\begin{enumerate}
\item\label{enm:lbFiniteA0}
If $\alpha_0<\infty$ we have
\begin{multline*}
\F_\varepsilon[\vartheta,\varphi_1,\ldots,\varphi_N;I]
\geq(\eta-\delta)^2\int_{I_\eta\cup C^\geq}\alpha_0|\vartheta|\,\d x\,\\
+\,(\eta-\delta)^2\sum_{C\in\mathcal C_\eta\setminus\mathcal C_\eta^\geq}\max\left\{\beta_1,\tau\left(\int_C|\vartheta|\,\d x\right)\right\}
\,-\,\alpha_0^2\hdone(I)\frac{\varepsilon}{\delta^2}\,.
\end{multline*}
\item\label{enm:lbInfiniteA0}
If $\alpha_0=\infty$ we have
\begin{multline*}
\F_\varepsilon[\vartheta,\varphi_1,\ldots,\varphi_N;I]
\geq\frac{\delta^2}{2\varepsilon\hdone(I)}\left(\int_{I_\eta\cup C^\geq}|\vartheta|\,\d x\right)^2\\
+(\eta-\delta)^2\sum_{C\in\mathcal C_\eta\setminus\mathcal C_\eta^\geq}\max\left\{\beta_1,\tau\left(\int_C|\vartheta|\,\d x\right)\right\}\, -\,\alpha_1^2\hdone(I)\frac{\varepsilon}{\delta^2}\,.
\end{multline*}
\end{enumerate}
\end{proposition}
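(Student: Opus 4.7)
My plan is to split $I$ additively as $I = (I_\eta\cup C^\geq)\sqcup\bigsqcup_{C\in\mathcal C_\eta\setminus\mathcal C_\eta^\geq}C$ and bound the contributions on the ``good region'' $I_\eta\cup C^\geq$ and on each bad component $C$ separately. Since the phase-field summands $\sum_i\beta_i\L_\varepsilon[\varphi_i;I_\eta\cup C^\geq]$ are non-negative, I may discard them, reducing the proposition to (i) a pure flux lower bound on $I_\eta\cup C^\geq$, and (ii) for each bad component $C$ a local bound combining $\int_C\omega_\varepsilon$ with $\sum_i\beta_i\L_\varepsilon[\varphi_i;C]$.

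For part (i) in case 1 ($\alpha_0<\infty$) the cornerstone is the tangent-line lower bound
\begin{equation*}
\omega_\varepsilon\bigl(\alpha_0,\tfrac{\gamma_\varepsilon}{\varepsilon},|\vartheta|\bigr) \;\geq\; \alpha_0|\vartheta|-\frac{\alpha_0^2\varepsilon}{2\gamma_\varepsilon},
\end{equation*}
which holds since $\omega_\varepsilon$ is convex in its last argument and the right-hand side is its supporting line of slope $\alpha_0$ at the junction $|\vartheta|=\alpha_0\varepsilon/\gamma_\varepsilon$ between the quadratic and affine pieces. Every $\varphi_i\geq\delta$ on $I_\eta\cup C^\geq$ forces $\gamma_\varepsilon\geq\delta^2$; integrating the pointwise inequality and then weakening trivially with $(\eta-\delta)^2\leq 1$ produces the first summand together with the bulk of the error. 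In case 2, $\alpha_0=\infty$ gives $\omega_\varepsilon=\gamma_\varepsilon|\vartheta|^2/(2\varepsilon)\geq\delta^2|\vartheta|^2/(2\varepsilon)$, and the Cauchy--Schwarz inequality $\bigl(\int_{I_\eta\cup C^\geq}|\vartheta|\bigr)^2\leq\hd^1(I)\int_{I_\eta\cup C^\geq}|\vartheta|^2$ yields the quadratic good-region term.

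For part (ii), on each bad $C$ I pick an index $i^*=i^*(C)\in\{1,\ldots,N\}$ with $\inf_C\varphi_{i^*}<\delta$; the endpoint hypothesis $\varphi_j(a),\varphi_j(b)\geq\eta$ together with $I_\eta\subset\{\varphi_1,\ldots,\varphi_N\geq\eta\}$ forces $\varphi_{i^*}\geq\eta$ at the two endpoints of $C$, so $\varphi_{i^*}$ undergoes a full descent-ascent crossing $[\delta,\eta]$. The Modica--Mortola AM-GM inequality applied to $\L_\varepsilon[\varphi_{i^*};C]$ then gives
\begin{equation*}
\beta_{i^*}\L_\varepsilon[\varphi_{i^*};C] \;\geq\; \beta_{i^*}\bigl[(1-\delta)^2-(1-\eta)^2\bigr] \;\geq\; \beta_{i^*}(\eta-\delta)^2 \;\geq\; \beta_1(\eta-\delta)^2,
\end{equation*}
supplying the ``$\max$ with $\beta_1$'' half of the bound. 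To upgrade to $(\eta-\delta)^2\tau(M_C)$ when $\tau(M_C)>\beta_1$ (with $M_C=\int_C|\vartheta|\,\d x$), I combine the refined tangent bound $\omega_\varepsilon\geq\alpha_i|\vartheta|-\alpha_i^2\varepsilon/(2\gamma_\varepsilon)$ (valid for each $i=1,\ldots,N$ because $\omega_\varepsilon$ is non-decreasing in its first argument) with a pointwise Young-type cancellation between the error $\alpha_i^2\varepsilon/(2\gamma_\varepsilon)$ and the AT density $\beta_i(1-\varphi_i)^2/(2\varepsilon)$, which coincide as $\beta_i/(2\varepsilon)$ exactly at $\varphi_i=0$. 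Finally, $\tau(M_C)\leq\alpha_{i^*}M_C+\beta_{i^*}$ closes the upgrade.

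The chief technical obstacle is this final upgrade on the bad components: on a transition region $\gamma_\varepsilon$ may degenerate all the way to $\min_i\alpha_i^2\varepsilon^2/\beta_i$, so the error in the linear lower bound on $\omega_\varepsilon$ blows up like $1/\varepsilon$ and is only controllable by pointwise absorption against the AT density. Handling this cleanly requires splitting $C$ into the level sets $\{j(x)=i\}$ of the index minimizing $\gamma_\varepsilon$ and carrying the inequality through a coarea-type integration with respect to $\varphi_i$, together with a distinction between the quadratic and affine regimes of $\omega_\varepsilon$; case 2 is entirely analogous but with $\alpha_0$ replaced by $\alpha_1$ in the good-region error.
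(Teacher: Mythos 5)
Your overall architecture matches the paper's: split $I$ into the good set $I_\eta\cup C^\geq$ and the bad components, use supporting lines of $\omega_\varepsilon$ of slope $\alpha_i$, extract $\beta_i(\eta-\delta)^2$ from a Modica--Mortola descent--ascent of $\varphi_{i^*}$ across $[\delta,\eta]$, and absorb the tangent-line errors into the Ambrosio--Tortorelli potential. One genuine difference is on the good region for $\alpha_0<\infty$: your direct use of $\omega_\varepsilon\geq\alpha_0|\vartheta|-\alpha_0^2\varepsilon/(2\gamma_\varepsilon)$ together with $\gamma_\varepsilon\geq\delta^2$ is a clean shortcut; the paper instead first modifies $\vartheta$ on $K_0^\varepsilon$ (keeping the cost and all $\int_C|\vartheta|$ fixed) so that $\omega_\varepsilon\geq(\eta-\delta)^2\alpha_0|\vartheta|$ there, and then runs Jensen plus an optimization in the residual mass $m_0$. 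The two yield the same bound, and your version also dispenses with the modification on the bad components, where the paper needs it to tame the affine regime of $\omega_\varepsilon$.

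The one step that fails as literally stated is the ``pointwise Young-type cancellation'': it is not true that $\alpha_i^2\varepsilon/(2\gamma_\varepsilon)\leq\beta_i(1-\varphi_i)^2/(2\varepsilon)$ pointwise. At $\varphi_i=0$ both sides equal $\beta_i/(2\varepsilon)$, but the right-hand side has derivative $-\beta_i/\varepsilon$ in $\varphi_i$ while the left-hand side has derivative $0$, so on a thin band $0<\varphi_i\lesssim\alpha_i^2\varepsilon^2/\beta_i$ the error strictly exceeds the AT density. You can repair this in two ways: either track the deficit, which is $O(\varepsilon)$ per unit length on that band and hence fits inside the allowed $\alpha_1^2\hd^1(I)\varepsilon/\delta^2$; or do what the paper does on $C_i^<=C\cap K_i^\varepsilon\cap\{\varphi_i<\delta\}$, namely keep the quadratic form of the flux term, bound $\gamma_\varepsilon\geq\alpha_i^2\varepsilon^2/\beta_i$ and $(1-\varphi_i)^2\geq(1-\delta)^2$, and apply AM--GM (equivalently Jensen plus optimization in $\hd^1(C_i^<)$) to get $\frac{\alpha_i^2\varepsilon}{2\beta_i}|\vartheta|^2+\frac{\beta_i(1-\delta)^2}{2\varepsilon}\geq\alpha_i(1-\delta)|\vartheta|$, which is exact. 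Two further points you gloss over but must fix when writing details: $i^*(C)$ has to be the \emph{maximal} index with $\inf_C\varphi_{i^*}<\delta$ (so that $\alpha_{i^*}$ is the smallest admissible slope), and the tangent slope must be chosen pointwise according to the index realizing $\gamma_\varepsilon$ whenever that phase field lies below $\delta$ --- otherwise, on subregions where $\gamma_\varepsilon$ is realized by some $\varphi_j$ with $j>i^*$, the error $\alpha_{i^*}^2\varepsilon/(2\gamma_\varepsilon)$ exceeds the AT density of $\varphi_j$ by the factor $\alpha_{i^*}^2/\alpha_j^2>1$ and cannot be absorbed. With these repairs your route goes through and is, on balance, no harder than the paper's.
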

\begin{proof}
\emph{\ref{enm:lbFiniteA0}.\ ($\alpha_0<\infty$)}
We first show that without loss of generality we may assume
\begin{equation}\label{eqn:largePhase field}
|\vartheta|\geq\frac{\alpha_0\varepsilon}{2\gamma_\varepsilon}\frac1{1-(\eta-\delta)^2}\quad\text{on }K_0^\varepsilon\,.
\end{equation}
The motivation is that there may be regions in which a phase field $\varphi_i$ is (still) small,
but in which we actually have to pay $\alpha_0|\vartheta|$.
Thus, in those regions we would like $\omega_\varepsilon(\alpha_0,\frac{\gamma_\varepsilon(x)}\varepsilon,|\vartheta(x)|)$ to approximate $\alpha_0|\vartheta(x)|$ sufficiently well,
and the above condition on $\vartheta$ ensures
\begin{equation}\label{eqn:largePhase field2}
\omega_\varepsilon\left(\alpha_0,\frac{\gamma_\varepsilon(x)}\varepsilon,|\vartheta(x)|\right)
=\alpha_0|\vartheta(x)|-\frac{\alpha_0^2\varepsilon}{2\gamma_\varepsilon(x)}
\geq(\eta-\delta)^2\alpha_0|\vartheta(x)|
\quad\text{for }x\in K_0^\varepsilon\,.
\end{equation}
We achieve \eqref{eqn:largePhase field} by modifying $\vartheta$ while keeping the cost as well as $\int_{I_\eta}|\vartheta|\,\d x$ and $\int_C|\vartheta|\,\d x$ for all $C\in\mathcal C_\eta$ the same
so that the overall estimate of the proposition is not affected.
The modification mimics the relaxation from \Cref{rem:relaxation}: the modified $\vartheta$ oscillates between small and very large values.
Indeed, for fixed $C\in\mathcal C_\eta\cup\{I_\eta\}$ and $x\in C$ set
\begin{equation*}
\hat\vartheta(x)=\begin{cases}
\max\{\frac{\alpha_0\varepsilon}{2\gamma_\varepsilon(x)}\frac1{1-(\eta-\delta)^2},\vartheta(x)\}
&\text{if }x\in K_0^\varepsilon\cap(-\infty,t_C]\,,\\
\frac{\alpha_0\varepsilon}{\gamma_\varepsilon(x)}
&\text{if }x\in K_0^\varepsilon\cap(t_C,\infty)\,,\\
\vartheta(x)&\text{else,}
\end{cases}
\end{equation*}
where $t_C$ is chosen such that $\int_C|\hat\vartheta|\,\d x=\int_C|\vartheta|\,\d x$
(this is possible, since for $t_C=\infty$ we have $|\hat\vartheta|\geq|\vartheta|$ and for $t_C=-\infty$ we have $|\hat\vartheta|\leq|\vartheta|$ everywhere on $C$).
The cost did not change by this modification since
\begin{align*}
&\F_\varepsilon[\hat\vartheta,\varphi_1,\ldots,\varphi_N;I]
-\F_\varepsilon[\vartheta,\varphi_1,\ldots,\varphi_N;I]\\
&=\int_{K_0^\varepsilon}\omega_\varepsilon\left(\alpha_0,\frac{\gamma_\varepsilon(x)}\varepsilon,|\hat\vartheta(x)|\right)-\omega_\varepsilon\left(\alpha_0,\frac{\gamma_\varepsilon(x)}\varepsilon,|\vartheta(x)|\right)\,\d x\\
&=\int_{K_0^\varepsilon}\left(\alpha_0|\hat\vartheta(x)|-\frac{\alpha_0^2\varepsilon}{2\gamma_\varepsilon(x)}\right)-\left(\alpha_0|\vartheta(x)|-\frac{\alpha_0^2\varepsilon}{2\gamma_\varepsilon(x)}\right)\,\d x\\
&=\alpha_0\int_{K_0^\varepsilon}|\hat\vartheta(x)|\,\d x-\alpha_0\int_{K_0^\varepsilon}|\vartheta(x)|\,\d x\\
&=0\,.
\end{align*}
Note that the modification $\hat\vartheta$ has a different set $K_0^\varepsilon(\hat\vartheta,\varphi_1,\ldots,\varphi_N;I)$ than the original $\vartheta$.
Indeed, by definition of $\hat\vartheta$ we have $K_0^\varepsilon(\hat\vartheta,\varphi_1,\ldots,\varphi_N;I)\subset K_0^\varepsilon(\vartheta,\varphi_1,\ldots,\varphi_N;I)$
and $|\vartheta(x)|\geq\frac{\alpha_0\varepsilon}{2\gamma_\varepsilon(x)}\frac1{1-(\eta-\delta)^2}$ on $K_0^\varepsilon(\hat\vartheta,\varphi_1,\ldots,\varphi_N;I)$, as desired.

Let us now abbreviate $m_0=\int_{(I_\eta\cup C^\geq)\setminus K_0^\varepsilon}|\vartheta|\,\d x$.
Using the definition of $\omega_\varepsilon$ as well as $\gamma_\varepsilon\geq\delta^2$ on $I_\eta\cup C^\geq$ we compute
\begin{multline*}
\F_\varepsilon[\vartheta,\varphi_1,\ldots,\varphi_N;I_\eta\cup C^\geq]
\geq\int_{(I_\eta\cup C^\geq)\cap K_0^\varepsilon}\omega_\varepsilon\left(\alpha_0,\frac{\gamma_\varepsilon}\varepsilon,|\vartheta|\right)\,\d x
+\frac{\delta^2}{2\varepsilon}\int_{(I_\eta\cup C^\geq)\setminus K_0^\varepsilon}|\vartheta|^2\,\d x\\
\geq(\eta-\delta)^2\int_{I_\eta\cup C^\geq}\alpha_0|\vartheta|\,\d x
-\alpha_0m_0
+\frac{\delta^2}{2\varepsilon\hdone((I_\eta\cup C^\geq)\setminus K_0^\varepsilon)}m_0^2\,,
\end{multline*}
where we have employed \eqref{eqn:largePhase field2} and Jensen's inequality.
Upon minimizing in $m_0$, which yields the optimal value $\alpha_0\varepsilon\hdone((I_\eta\cup C^\geq)\setminus K_0^\varepsilon)/\delta^2$ for $m_0$, we thus obtain
\begin{equation*}
\F_\varepsilon[\vartheta,\varphi_1,\ldots,\varphi_N;I_\eta\cup C^\geq]
\geq(\eta-\delta)^2\int_{I_\eta\cup C^\geq}\alpha_0|\vartheta|\,\d x-\frac{\alpha_0^2\varepsilon\hdone(I_\eta\cup C^\geq)}{2\delta^2}\,.
\end{equation*}

Next consider for each $C\in\mathcal C_\eta\setminus\mathcal C_\eta^\geq$ and $i=1,\ldots,N$ the subsets
\begin{align*}
C_i^\geq&=C\cap K_i^\varepsilon\cap\{\varphi_i\geq\delta\}\,,&
C_i^<&=C\cap K_i^\varepsilon\cap\{\varphi_i<\delta\}\,,
\end{align*}
and abbreviate $m_A=\int_A|\vartheta|\,\d x$ for any $A\subset I$.
Using Young's inequality, for $i,j=1,\ldots,N$ we have
\begin{multline*}
\int_{C_i^\geq}\frac{\varphi_i^2+\alpha_i^2\varepsilon^2/\beta_i}{2\varepsilon}|\vartheta|^2\,\d x
\geq\int_{C_i^\geq}\frac{\delta^2+\alpha_i^2\varepsilon^2/\beta_i}{2\varepsilon}|\vartheta|^2\,\d x\\
\geq\int_{C_i^\geq}\alpha_j|\vartheta|-\frac{\alpha_j^2\varepsilon/2}{\delta^2+\alpha_i^2\varepsilon^2/\beta_i}\,\d x
\geq\alpha_jm_{C_i^\geq}-\frac{\alpha_j^2\varepsilon}{2\delta^2}\hdone(C_i^\geq)\,.
\end{multline*}
Similarly, using Jensen's inequality we have
\begin{multline*}
\int_{C_i^<}\frac{\varphi_i^2+\alpha_i^2\varepsilon^2/\beta_i}{2\varepsilon}|\vartheta|^2+\frac{\beta_i}{2\varepsilon}(\varphi_i-1)^2\,\d x
\geq\int_{C_i^<}\frac{\alpha_i^2\varepsilon}{2\beta_i}|\vartheta|^2+\frac{\beta_i}{2\varepsilon}(\delta-1)^2\,\d x\\
\geq\frac{\alpha_i^2\varepsilon}{2\beta_i}\frac1{\hdone(C_i^<)}\left(\int_{C_i^<}|\vartheta|\,\d x\right)^2
+\frac{\beta_i}{2\varepsilon}(1-\delta)^2\hdone(C_i^<)
\geq\alpha_i(1-\delta)m_{C_i^<}\,,
\end{multline*}
where in the last step we optimized for $\hdone(C_i^<)$.
Finally, if $\inf_C\varphi_i\leq\delta$ we have (using Young's inequality)
\begin{multline*}
\int_{C\setminus C_i^<}\frac{\beta_i}2\left(\varepsilon|\varphi_i'|^2+\frac{(\varphi_i-1)^2}\varepsilon\right)\,\d x
\geq\beta_i\int_{C\setminus C_i^<}|\varphi_i'|\,|1-\varphi_i|\,\d x\\
\geq\beta_i\left(\int_c^d|\varphi_i'|\,|1-\varphi_i|\,\d x+\int_e^f|\varphi_i'|\,|1-\varphi_i|\,\d x\right)
\geq\beta_i\left(\int_\eta^\delta\varphi_i-1\,\d\varphi_i+\int_\delta^\eta1-\varphi_i\,\d\varphi_i\right)\\
=\beta_i((1-\delta)^2-(1-\eta)^2)
\geq\beta_i(\eta-\delta)^2\,,
\end{multline*}
where $(c,d)$ and $(e,f)$ denote the first and the last connected component of $C\setminus C_i^<$.

Next, for $C\in\mathcal C_\eta\setminus\mathcal C_\eta^\geq$ define $j(C)=\max\{j\in\{1,\ldots,N\}\,|\,\inf_C\varphi_j<\delta\}$. Summarizing the previous estimates we obtain
\begin{align*}
&\F_\varepsilon[\vartheta,\varphi_1,\ldots,\varphi_N;I\setminus(I_\eta\cup C^\geq)]\\
&\geq\sum_{C\in\mathcal C_\eta\setminus\mathcal C_\eta^\geq}\Bigg(\int_{C\cap K_0^\varepsilon}\omega_\varepsilon\left(\alpha_0,\frac{\gamma_\varepsilon}\varepsilon,|\vartheta|\right)\,\d x+\sum_{i=1}^N\bigg(
\int_{C_i^\geq}\frac{\varphi_i^2+\alpha_i^2\varepsilon^2/\beta_i}{2\varepsilon}|\vartheta|^2\,\d x\\
&\quad+\int_{C_i^<}\frac{\varphi_i^2+\alpha_i^2\varepsilon^2/\beta_i}{2\varepsilon}|\vartheta|^2+\frac{\beta_i}{2\varepsilon}(\varphi_i-1)^2\,\d x
+\int_{C\setminus C_i^<}\frac{\beta_i}2\left(\varepsilon|\varphi_i'|^2+\frac{(\varphi_i-1)^2}\varepsilon\right)\,\d x\bigg)\Bigg)\\
&\geq\sum_{C\in\mathcal C_\eta\setminus\mathcal C_\eta^\geq}\left((\eta-\delta)^2\alpha_0m_{C\cap K_0^\varepsilon}+\sum_{i=1}^N\left(
\alpha_{j(C)}m_{C_i^\geq}-\frac{\alpha_{j(C)}^2\varepsilon}{2\delta^2}\hdone(C_i^\geq)
+\alpha_i(1-\delta)m_{C_i^<}\right)
+\sum_{\substack{i=1\\\inf_C\varphi_i\leq\delta}}^N\beta_i(\eta-\delta)^2\right)\\
&\geq\sum_{C\in\mathcal C_\eta\setminus\mathcal C_\eta^\geq}\left((\eta-\delta)^2(\alpha_{j(C)}m_{C}+\beta_{j(C)})-\frac{\alpha_{j(C)}^2\varepsilon}{2\delta^2}\hdone(C)\right)\\
&\geq(\eta-\delta)^2\sum_{C\in\mathcal C_\eta\setminus\mathcal C_\eta^\geq}\max\left\{\beta_1,\tau\left(\int_C|\vartheta|\,\d x\right)\right\}-\frac{\alpha_1^2\varepsilon}{2\delta^2}\hdone(I)\,.
\end{align*}

Finally, we obtain the desired estimate,
\begin{multline*}
\F_\varepsilon[\vartheta,\varphi_1,\ldots,\varphi_N;I]
=\F_\varepsilon[\vartheta,\varphi_1,\ldots,\varphi_N;I_\eta\cup C^\geq]+\F_\varepsilon[\vartheta,\varphi_1,\ldots,\varphi_N;I\setminus(I_\eta\cup C^\geq)]\\
\geq(\eta-\delta)^2\int_{I_\eta\cup C^\geq}\alpha_0|\vartheta|\,\d x-\frac{\alpha_0^2\varepsilon\hdone(I)}{2\delta^2}
+(\eta-\delta)^2\sum_{C\in\mathcal C_\eta\setminus\mathcal C_\eta^\geq}\max\left\{\beta_1,\tau\left(\int_C|\vartheta|\,\d x\right)\right\}-\frac{\alpha_1^2\varepsilon}{2\delta^2}\hdone(I)\,.
\end{multline*}

\medskip
\emph{\ref{enm:lbInfiniteA0}.\ ($\alpha_0=\infty $)} In this case the set $K_0^\varepsilon$ is empty, and the cost functional reduces to
\begin{equation*}
\F_\varepsilon[\vartheta,\varphi_1,\ldots,\varphi_N;I]
=\int_{I}\frac{\gamma_\varepsilon(x)|\vartheta(x)|^2}{2\varepsilon}
+\sum_{i=1}^N\frac{\beta_i}2\left[\varepsilon|\varphi_i'(x)|^2+\frac{(\varphi_i(x)-1)^2}\varepsilon\right]\,\d x.
\end{equation*}
With Jensen's inequality we thus compute
\begin{multline*}
\F_\varepsilon[\vartheta,\varphi_1,\ldots,\varphi_N;I_\eta\cup C^\geq]
\geq\int_{I_\eta\cup C^\geq}\frac{\gamma_\varepsilon}{2\varepsilon}|\vartheta|^2\,\d x
\geq \frac{\delta^2}{2\varepsilon}\int_{I_\eta\cup C^\geq}|\vartheta|^2\,\d x\\
\geq \frac{\delta^2}{2\varepsilon}\frac{\left(\int_{I_\eta\cup C^\geq}|\vartheta|\,\d x\right)^2}{\hdone(I_\eta\cup C^\geq)}
\geq \frac{\delta^2}{2\varepsilon}\frac{\left(\int_{I_\eta\cup C^\geq}|\vartheta|\,\d x\right)^2}{\hdone(I)}\,.
\end{multline*}
Furthermore, the same calculation as in the case $\alpha_0<\infty$ yields
\begin{align*}
\F_\varepsilon[\tilde\vartheta,\varphi_1,\ldots,\varphi_N;I\setminus(I_\eta\cup C^\geq)]
\geq(\eta-\delta)^2\sum_{C\in\mathcal C_\eta\setminus\mathcal C_\eta^\geq}\max\left\{\beta_1,\tau\left(\int_C|\vartheta|\,\d x\right)\right\}-\frac{\alpha_1^2\varepsilon}{2\delta^2}\hdone(I)
\end{align*}
so that we obtain the desired estimate
\begin{multline*}
\F_\varepsilon[\vartheta,\varphi_1,\ldots,\varphi_N;I]
=\F_\varepsilon[\vartheta,\varphi_1,\ldots,\varphi_N;I_\eta\cup C^\geq]+\F_\varepsilon[\vartheta,\varphi_1,\ldots,\varphi_N;I\setminus(I_\eta\cup C^\geq)]\\
\geq\frac{\delta^2}{2\varepsilon}\frac{\left(\int_{I_\eta\cup C^\geq}|\vartheta|\,\d x\right)^2}{\hdone(I)}+(\eta-\delta)^2\sum_{C\in\mathcal C_\eta\setminus\mathcal C_\eta^\geq}\max\left\{\beta_1,\tau\left(\int_C|\vartheta|\,\d x\right)\right\}-\frac{\alpha_1^2\varepsilon}{\delta^2}\hdone(I)\,.
\qedhere
\end{multline*}
\end{proof}

\begin{corollary}[$\Gamma-\liminf$ inequality for reduced functionals]\label{thm:GammaLiminfReduced}
Let $J\subset\R$ be open and bounded, $\vartheta\in\meas(\overline J)$, and $\varphi_1,\ldots,\varphi_N\in L^1(\Omega)$. Then
\begin{equation*}
\Gamma-\liminf_{\varepsilon\to0}\f_\varepsilon[\vartheta,\varphi_1,\ldots,\varphi_N;J]\geq \f[\vartheta,\varphi_1,\ldots,\varphi_N;J]
\end{equation*}
with respect to weak-$*$ convergence in $\meas(\overline J)$ and strong convergence in $L^1(J)^N$.
\end{corollary}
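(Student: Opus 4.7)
The plan is to apply \Cref{thm:lowerBoundReduced} pointwise along a recovery sequence, pass to the limit in the resulting finite-$\varepsilon$ lower bound, and finally let $\delta\to 0$ and $\eta\to 1$. Take any sequence $(\vartheta^\varepsilon,\varphi_1^\varepsilon,\ldots,\varphi_N^\varepsilon)\to(\vartheta,\varphi_1,\ldots,\varphi_N)$ in the stated topology that achieves the $\liminf$, which we may assume is finite. The $\L_\varepsilon$-contributions are uniformly bounded, so standard Modica--Mortola arguments give $\varphi_i^\varepsilon\to 1$ in $L^2(J)$ (and, up to subsequence, pointwise a.e.); in particular $\varphi_i=1$ a.e., since otherwise both sides of the inequality are $+\infty$.

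Fix $\eta\in(0,1)$ and $\delta\in(0,\eta)$. Using pointwise convergence I select, for each small $\varepsilon$, an inner interval $J^\varepsilon=(a^\varepsilon,b^\varepsilon)\subset J$ with $|J\setminus J^\varepsilon|\to 0$ and $\varphi_i^\varepsilon(a^\varepsilon),\varphi_i^\varepsilon(b^\varepsilon)\geq\eta$ for all $i$, so that \Cref{thm:lowerBoundReduced} is applicable on $J^\varepsilon$ with $I_\eta^\varepsilon=\{x\in J^\varepsilon:\min_i\varphi_i^\varepsilon(x)\geq\eta\}$ (an open set, since $\varphi_i^\varepsilon$ is continuous in one dimension). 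Each bad component in $\mathcal{C}_\eta^\varepsilon\setminus\mathcal{C}_\eta^{\geq,\varepsilon}$ contributes at least $\beta_1(\eta-\delta)^2$ to the energy, so their number $M^\varepsilon$ is uniformly bounded. Passing to a subsequence, $M^\varepsilon\equiv M$, the bad components $C_1^\varepsilon,\ldots,C_M^\varepsilon$ converge in Hausdorff distance to compact sets $K_k$, and the masses $\int_{C_k^\varepsilon}|\vartheta^\varepsilon|\,\d x$ converge to limits $m_k$. A union-bound argument using $\sum_i\hdone(\{\varphi_i^\varepsilon<\eta\}\cap C_k^\varepsilon)\geq\hdone(C_k^\varepsilon)$ together with the Modica--Mortola lower bound $\beta_i(1-\eta)^2\hdone(\{\varphi_i^\varepsilon<\eta\}\cap C_k^\varepsilon)/(2\varepsilon)$ forces $\hdone(C_k^\varepsilon)\to 0$, so each $K_k=\{x_k\}$ for some $x_k\in\overline J$.

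For small $\rho>0$ with the $\overline B_\rho(x_k)$ disjoint and compactly contained in $J$, eventually $C_k^\varepsilon\subset\overline B_\rho(x_k)$, hence $I_\eta^\varepsilon\cup C^{\geq,\varepsilon}\supset U_\rho:=J\setminus\bigcup_k\overline B_\rho(x_k)$, and weak-$*$ lower semi-continuity of total variation on the open set $U_\rho$ yields
\begin{equation*}
\liminf_\varepsilon\int_{I_\eta^\varepsilon\cup C^{\geq,\varepsilon}}|\vartheta^\varepsilon|\,\d x\geq|\vartheta|(U_\rho)\xrightarrow{\rho\to 0}|\vartheta^\perp|(J)+\sum_{y\in S_\vartheta\setminus\{x_1,\ldots,x_M\}}|m_\vartheta(y)|\,.
\end{equation*}
Combined with $\alpha_0 m\geq\tau(m)$, this estimates the diffuse plus non-captured atomic cost from below. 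For each captured atom, a separate argument shows $|m_\vartheta(x_k)|\leq m_k$: the quadratic part of $\omega_\varepsilon$ bounds $\|\vartheta^\varepsilon\|_{L^2(I_\eta^\varepsilon\cup C^{\geq,\varepsilon})}$ and the linear part bounds the $L^1$-mass on $K_0^\varepsilon$, which together make the $|\vartheta^\varepsilon|$-mass on $\overline B_\rho(x_k)\setminus C_k^\varepsilon$ vanish as $\varepsilon\to 0$; then $\vartheta^\varepsilon\weakstarto\vartheta$ gives $m_\vartheta(x_k)=\lim_\varepsilon\int_{C_k^\varepsilon}\vartheta^\varepsilon\,\d x$, whence $|m_\vartheta(x_k)|\leq m_k$ and by monotonicity of $\tau$, $\tau(|m_\vartheta(x_k)|)\leq\max\{\beta_1,\tau(m_k)\}$. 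Assembling the two contributions and letting $\delta\to 0$, $\eta\to 1$ yields the claim. The case $\alpha_0=\infty$ is analogous, using part \ref{enm:lbInfiniteA0} of \Cref{thm:lowerBoundReduced}, with the quadratic first term forcing $\int_{I_\eta^\varepsilon\cup C^{\geq,\varepsilon}}|\vartheta^\varepsilon|\,\d x\to 0$ whenever the energy stays bounded, which is consistent with the requirement $\vartheta^\perp=0$ for finite $\f$.

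The main obstacle is the bound $|m_\vartheta(x_k)|\leq m_k$: weak-$*$ convergence of signed measures does not automatically control the total variation near a concentration point, and one must carefully exploit the quadratic plus linear structure of $\omega_\varepsilon$ (and, for $\alpha_0<\infty$, the regularizing term $\varepsilon^p|\sigma|^2$) to rule out the possibility that $|\vartheta^\varepsilon|$-mass leaks from the bad component into the adjacent good region where $\vartheta^\varepsilon$ could otherwise concentrate at $x_k$.
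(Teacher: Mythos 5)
Your overall strategy (apply \Cref{thm:lowerBoundReduced}, bound the number of bad components, let them shrink to points $x_k$, assemble diffuse and atomic contributions, then send $\delta\to0$, $\eta\to1$) is the same as the paper's, but the step you yourself flag as the main obstacle is where the argument genuinely breaks down when $\alpha_0<\infty$. You claim that the $|\vartheta^\varepsilon|$-mass on $\overline B_\rho(x_k)\setminus C_k^\varepsilon$ vanishes and deduce $|m_\vartheta(x_k)|\le m_k$. This is false: on $K_0^\varepsilon$ the integrand $\omega_\varepsilon$ is only \emph{linear} in $|\vartheta^\varepsilon|$, so bounded energy yields only $\int_{K_0^\varepsilon\cap B_\rho(x_k)}|\vartheta^\varepsilon|\,\d x\le C/\alpha_0$, not $o(1)$. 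A competitor may legitimately route a fixed fraction of the atom's mass through the good region adjacent to $C_k^\varepsilon$ at finite cost $\alpha_0$ per unit mass (e.g.\ $|\vartheta^\varepsilon|=\tfrac{m}{2h'}$ on an adjacent interval of length $h'$ costs $\alpha_0 m/2$), so mass really can leak out of the bad component and $|m_\vartheta(x_k)|\le m_k$ need not hold. The regularizer $\varepsilon^p|\sigma|^2$ does not help (it gives $\int_A|\vartheta^\varepsilon|\lesssim\varepsilon^{-p/2}\sqrt{|A|}$), and the quadratic part of $\omega_\varepsilon$ only controls the sub-threshold portion of the good region.

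The correct repair --- and what the paper does --- is not to rule out leakage but to absorb it: the leaked mass is already charged at rate $\alpha_0$ in the first term of \Cref{thm:lowerBoundReduced}, and since $\alpha_0 m\ge\tau(m)$ and $\tau$ is subadditive one has, for disjoint $A,B,C$,
\begin{equation*}
\int_{A\cup B}\alpha_0|\vartheta^\varepsilon|\,\d x+\tau\Bigl(\int_C|\vartheta^\varepsilon|\,\d x\Bigr)\;\ge\;\int_A\alpha_0|\vartheta^\varepsilon|\,\d x+\tau\Bigl(\int_{B\cup C}|\vartheta^\varepsilon|\,\d x\Bigr)\,,
\end{equation*}
applied with $B=B_\zeta(x_k)\setminus C_k^\varepsilon$ and $C=C_k^\varepsilon$. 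This replaces $\tau(m_k)$ by $\tau\bigl(\int_{B_\zeta(x_k)}|\vartheta^\varepsilon|\,\d x\bigr)$, to which lower semicontinuity of the mass on open sets applies directly, and letting $\zeta\to0$ then recovers $\tau(|m_\vartheta(x_k)|)$ while the remaining mass on $I\setminus B_\zeta$ is charged at $\alpha_0\ge\tau'(0)$. Your treatment of $\alpha_0=\infty$ is essentially sound, since there the quadratic term does force the good-region mass to vanish (the paper additionally quantifies the error via $\tau(m_1+m_2)\le\tau(m_1)+\alpha_1m_2$ and an optimization against $\delta^2/\varepsilon$). The remaining ingredients of your proof --- Egorov/inner interval, uniformly bounded number of bad components shrinking to points, the choice of $\delta,\eta$ (the paper couples them as $\delta=\varepsilon^{1/3}$, $\eta=1-\varepsilon$, which is equivalent) --- match the paper and are fine.
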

\begin{proof}
Let $(\vartheta^\varepsilon,\varphi_1^\varepsilon,\ldots,\varphi_N^\varepsilon)$ be an arbitrary sequence converging to $(\vartheta,\varphi_1,\ldots,\varphi_N)$ in the considered topology as $\varepsilon\to0$,
and assume without loss of generality that the limit inferior of $\f_\varepsilon[\vartheta^\varepsilon,\varphi_1^\varepsilon,\ldots,\varphi_N^\varepsilon;J]$ is actually a limit and is finite (else there is nothing to show).
Further we may assume $\f_\varepsilon[\vartheta^\varepsilon,\varphi_1^\varepsilon,\ldots,\varphi_N^\varepsilon;J]=\F_\varepsilon[\vartheta^\varepsilon,\varphi_1^\varepsilon,\ldots,\varphi_N^\varepsilon;J]$ along the sequence.

It suffices to show the $\liminf$-inequality for each connected component $\tilde I=(a,b)$ of $J$ separately.
Due to $\F_\varepsilon[\vartheta^\varepsilon,\varphi_1^\varepsilon,\ldots,\varphi_N^\varepsilon;\tilde I]\geq\frac{\beta_i}{2\varepsilon}\|\varphi_i^\varepsilon-1\|_{L^2}^2$ for $i=1,\ldots,N$
we must have $\varphi_i^\varepsilon\to1$ in $L^2(\tilde I)$ and thus also in $L^1(\tilde I)$ so that $\varphi_i=1$.
Even more, after passing to another subsequence, by Egorov's theorem all $\varphi_i^\varepsilon$ converge uniformly to $1$ outside a set of arbitrarily small measure.
In particular, for any $\xi>0$ we can find an open interval $(a+\xi,b-\xi)\subset I\subset\tilde I$ such that $\varphi_i^\varepsilon\to1$ uniformly on $\partial I$,
and for any $\eta<1$ there is an open set $I_\eta\subset I$ with $\hdone(I\setminus I_\eta)\leq1-\eta$ such that $\varphi_i^\varepsilon\geq\eta$ on $I_\eta\cup\partial I$ for all $i=1,\ldots,N$ and all $\varepsilon$ small enough.

We now choose $\delta=\varepsilon^{1/3}$ and $\eta=1-\varepsilon$ and denote by $\mathcal C_\eta(\varepsilon)$ and $\mathcal C_\eta^\geq(\varepsilon)$ the collections of connected components of $I\setminus I_\eta$ from \Cref{thm:lowerBoundReduced} (which now depend on $\varepsilon$).
Further we abbreviate $\mathcal C_\eta^<(\varepsilon)=\mathcal C_\eta(\varepsilon)\setminus\mathcal C_\eta^\geq(\varepsilon)$.
The bound of \Cref{thm:lowerBoundReduced} implies that the number of elements in $\mathcal C_\eta^<(\varepsilon)$ is bounded uniformly in $\varepsilon$ and $\eta$.
Passing to another subsequence we may assume $\mathcal C_\eta^<(\varepsilon)$ to contain $K$ sets $C_1(\varepsilon),\ldots,C_K(\varepsilon)$ whose midpoints converge to $x_1,\ldots,x_K\in\overline I$, respectively.
Thus for an arbitrary $\zeta>0$ we have that for all $\varepsilon$ small enough each $C\in\mathcal C_\eta^<(\varepsilon)$ lies inside the closed $\zeta$-neighbourhood $B_\zeta(\{x_1,\ldots,x_K\})$ of $\{x_1,\ldots,x_K\}$.

Now for $\alpha_0<\infty$ we obtain from \Cref{thm:lowerBoundReduced}
\begin{align*}
&\liminf_{\varepsilon\to0}\F_\varepsilon[\vartheta^\varepsilon,\varphi_1^\varepsilon,\ldots,\varphi_N^\varepsilon;I]\\
&\geq\liminf_{\varepsilon\to0}(\eta-\delta)^2\int_{I_\eta\cup C^\geq(\varepsilon)}\alpha_0|\vartheta^\varepsilon|\,\d x\,+\,(\eta-\delta)^2\sum_{i=1}^K\max\left\{\beta_1,\tau\left(\int_{C_i(\varepsilon)}|\vartheta^\varepsilon|\,\d x\right)\right\}\\
&\geq\liminf_{\varepsilon\to0}(\eta-\delta)^2\int_{I\setminus B_\zeta(\{x_1,\ldots,x_K\})}\alpha_0|\vartheta^\varepsilon|\,\d x\,+\,(\eta-\delta)^2\sum_{i=1}^K\tau\left(\int_{B_\zeta(\{x_i\})}|\vartheta^\varepsilon|\,\d x\right)\\
&\geq\alpha_0|\vartheta|(I\setminus B_\zeta(\{x_1,\ldots,x_K\}))+\sum_{i=1}^K\tau\left(|\vartheta|(B_\zeta(\{x_i\}))\right)\,,
\end{align*}
where in the second inequality we used
\begin{multline*}
\int_{A\cup B}\alpha_0|\vartheta|\,\d x+\tau\left(\int_C|\vartheta|\,\d x\right)
\geq\int_{A}\alpha_0|\vartheta|\,\d x+\tau\left(\int_{B}|\vartheta|\,\d x\right)+\tau\left(\int_{C}|\vartheta|\,\d x\right)\\
\geq\int_{A}\alpha_0|\vartheta|\,\d x+\tau\left(\int_{B\cup C}|\vartheta|\,\d x\right)
\end{multline*}
for all measurable $A,B,C\subset I$ (due to the subadditivity of $\tau$) and in the third inequality we used $\tau(m)\leq\alpha_0m$ as well as the lower semi-continuity of the mass on an open set.
Letting now $\zeta\to0$ (so that by the $\sigma$-continuity of $\vartheta$ we have $|\vartheta|(I\setminus B_\zeta(\{x_1,\ldots,x_K\}))\to|\vartheta|(I\setminus\{x_1,\ldots,x_K\})$) we obtain
\begin{multline*}
\liminf_{\varepsilon\to0}\F_\varepsilon[\vartheta^\varepsilon,\varphi_1^\varepsilon,\ldots,\varphi_N^\varepsilon;I]
\geq\alpha_0|\vartheta|(I\setminus\{x_1,\ldots,x_K\})+\sum_{i=1}^K\tau(|m_\vartheta(x_i)|)\\
\geq\alpha_0|\vartheta^\perp|(I)+\int_{S_\vartheta\cap I}\tau(|m_\vartheta|)\,\d\hd^0
=\F[\vartheta;I]\,.
\end{multline*}

If on the other hand $\alpha_0=\infty$ we obtain from \Cref{thm:lowerBoundReduced}
\begin{multline*}
\liminf_{\varepsilon\to0}\F_\varepsilon[\vartheta^\varepsilon,\varphi_1^\varepsilon,\ldots,\varphi_N^\varepsilon;I]
\geq\liminf_{\varepsilon\to0}\frac{\delta^2}{2\varepsilon\hdone(I)}\left(\int_{I_\eta\cup C^\geq(\varepsilon)}|\vartheta^\varepsilon|\,\d x\right)^2\\
\geq\liminf_{\varepsilon\to0}\frac{\delta^2}{2\varepsilon\hdone(I)}\left(\int_{I\setminus B_\zeta(\{x_1,\ldots,x_K\})}|\vartheta^\varepsilon|\,\d x\right)^2
\end{multline*}
which implies $|\vartheta|(I\setminus\{x_1,\ldots,x_K\})=0$ and thus $|\vartheta^\perp|(\overline I)=0$ as well as $S_\vartheta\cap\overline I\subset\{x_1,\ldots,x_K\}$.
Next note that for all $i\in\{1,\ldots,K\}$ with $|m_\vartheta(x_i)|>0$ we have $\int_{C_i(\varepsilon)}|\vartheta^\varepsilon|\,\d x>0$ for all $\varepsilon$ small enough.
Indeed,
\begin{equation*}
|m_\vartheta(x_i)|
\leq\liminf_{\varepsilon\to0}\int_{B_\zeta(\{x_i\})}|\vartheta^\varepsilon|\,\d x
=\liminf_{\varepsilon\to0}\int_{C_i(\varepsilon)}|\vartheta^\varepsilon|\,\d x+\int_{B_\zeta(\{x_i\})\setminus C_i(\varepsilon)}|\vartheta^\varepsilon|\,\d x\,,
\end{equation*}
where $\int_{B_\zeta(\{x_i\})\setminus C_i(\varepsilon)}|\vartheta^\varepsilon|\,\d x$ decreases to zero by \Cref{thm:lowerBoundReduced}.
Therefore, \Cref{thm:lowerBoundReduced} implies
\begin{align*}
&\liminf_{\varepsilon\to0}\F_\varepsilon[\vartheta^\varepsilon,\varphi_1^\varepsilon,\ldots,\varphi_N^\varepsilon;I]\\
&\geq\liminf_{\varepsilon\to0}\frac{\delta^2}{2\varepsilon\hdone(I)}\left(\int_{I_\eta\cup C^\geq(\varepsilon)}|\vartheta^\varepsilon|\,\d x\right)^2+(\eta-\delta)^2\sum_{\substack{i=1\\|m_\vartheta(x_i)|>0}}^K\tau\left(\int_{C_i(\varepsilon)}|\vartheta^\varepsilon|\,\d x\right)\\
&\geq\liminf_{\varepsilon\to0}\frac{\delta^2}{2\varepsilon\hdone(I)}\sum_{\substack{i=1\\|m_\vartheta(x_i)|>0}}^K\left(\int_{B_\zeta(\{x_i\})\setminus C_i(\varepsilon)}|\vartheta^\varepsilon|\,\d x\right)^2\\
&\qquad\qquad\qquad+\sum_{\substack{i=1\\|m_\vartheta(x_i)|>0}}^K\left(\tau\left(\int_{B_\zeta(\{x_i\})}|\vartheta^\varepsilon|\,\d x\right)-\alpha_1\int_{B_\zeta(\{x_i\})\setminus C_i(\varepsilon)}|\vartheta^\varepsilon|\,\d x\right)\\
&\geq\liminf_{\varepsilon\to0}\sum_{\substack{i=1\\|m_\vartheta(x_i)|>0}}^K\left(\tau\left(\int_{B_\zeta(\{x_i\})}|\vartheta^\varepsilon|\,\d x\right)-\frac{\alpha_1^2\varepsilon\hdone(I)}{2\delta^2}\right)\\
&\geq\sum_{\substack{i=1\\|m_\vartheta(x_i)|>0}}^K\tau\left(|\vartheta|(B_\zeta(\{x_i\}))\right)\\
&=\int_{S_\vartheta}\tau(|m_\vartheta|)\,\d\hd^0
=\F[\vartheta;I]\,,
\end{align*}
where in the second inequality we used $\tau(m_1+m_2)\leq\tau(m_1)+\alpha_1m_2$ for any $m_1>0$, $m_2\geq0$ and in the third we optimized in $\int_{B_\zeta(\{x_i\})\setminus C_i(\varepsilon)}|\vartheta^\varepsilon|\,\d x$.

The proof is concluded by letting $\xi\to0$ and noting $\liminf_{\xi\to0}\F[\vartheta;I]\geq\F[\vartheta;\tilde I]$.
\end{proof}

\subsection{Slicing of vector-valued measures}
We derive now some technical construction for divergence measure vector fields which are needed to reduce the $\Gamma-\liminf$ inequality to the lower-dimensional setting of the previous section.
In particular, we will introduce slices of a divergence measure vector field, which in the language of geometric measure theory correspond to slices of currents. We will slice in the direction of a unitary vector  $\xi\in \Disk^{n-1}$ with orthogonal hyperplanes of the form
\begin{equation*}
H_{\xi,t}=\pi_{\xi}^{-1}(t)
\qquad\text{for the projection }
\pi_\xi:\R^{n}\to \R,\;\pi_\xi(x)=x\cdot \xi\,.
\end{equation*}
The orthogonal projection onto $H_{\xi,t}$ is denoted
\begin{equation*}
\pi_{H_{\xi,t}}(x)=(I-\xi\otimes\xi)x+t\xi\,.
\end{equation*}
The slicing will essentially be performed via disintegration.
Let $\sigma$ be a compactly supported divergence measure vector field.
By the Disintegration Theorem~\cite[Thm.~2.28]{AmFuPa00}, for all $\xi\in \Disk^{n-1}$ and almost all $t\in\R$ there exists a unique measure $\nu_{\xi,t}\in\meas(H_{\xi,t})$ such that 
\begin{equation*}
\|\nu_{\xi,t}\|_\meas =1\quad \text{ and }\quad \sigma\cdot\xi= \nu_{\xi,t}\otimes \pushforward{\pi_\xi} |\sigma\cdot\xi|(t)\,.
\end{equation*}
We decompose $\pushforward{\pi_\xi} |\sigma\cdot\xi|$ into its absolutely continuous and singular part according to
\begin{equation*}
\pushforward{\pi_\xi} |\sigma\cdot\xi|=\sigma_\xi(t)\,\d t+\sigma_\xi^\perp
\end{equation*}
for $\d t$ the Lebesgue measure on $\R$.
\begin{lemma}\label{lemma:L1density}
For any $\xi\in\Disk^{n-1}$ and any compactly supported divergence measure vector field $\sigma$ we have $\sigma_\xi^\perp=0$, that is,
the measure $\pushforward{\pi_\xi}{|\sigma \cdot\xi|}=\sigma_\xi(t)\,\d t$ is absolutely continuous with respect to the Lebesgue measure on $\R$.
Moreover, for almost all $t\in\R$ and any compactly supported $\theta\in\cont^\infty(\R^n)$ we have
\begin{equation}\label{eqn:slicingCharacterization}
\sigma_\xi(t)\int_{H_{\xi,t}}\theta\,\d\nu_{\xi,t}
=\int_{\{\xi\cdot x<t\}}\nabla\theta\cdot\d\sigma+\int_{\{\xi\cdot x<t\}}\theta\,\d\dive\sigma\,.
\end{equation}
\end{lemma}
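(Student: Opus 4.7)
\medskip
\textbf{Proof plan.} The plan is to deduce both claims from a single integration-by-parts identity. For $\theta\in\cont^\infty(\R^n)$ with compact support, write
$$F_\theta(t)=\int_{\{\xi\cdot x<t\}}\nabla\theta\cdot\d\sigma+\int_{\{\xi\cdot x<t\}}\theta\,\d\dive\sigma,\qquad g_\theta(t)=\int_{H_{\xi,t}}\theta\,\d\nu_{\xi,t}.$$
Both are bounded in $t$ and compactly supported: $g_\theta$ is bounded because $\|\nu_{\xi,t}\|_\meas=1$, and $F_\theta$ vanishes outside a bounded set since $\sigma$ is compactly supported and, by the weak-divergence identity applied to $\theta$ itself, $\int\nabla\theta\cdot\d\sigma+\int\theta\,\d\dive\sigma=0$. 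For any $\psi\in\cont_c^1(\R)$, Fubini together with $\int_{\xi\cdot x}^{\infty}\psi'(t)\,\d t=-\psi(\xi\cdot x)$ gives
$$\int_\R\psi'(t)F_\theta(t)\,\d t=-\int_{\R^n}\psi(\xi\cdot x)\,\nabla\theta\cdot\d\sigma-\int_{\R^n}\psi(\xi\cdot x)\,\theta\,\d\dive\sigma.$$
Testing the weak divergence against the $\cont^1$ compactly supported function $x\mapsto\psi(\xi\cdot x)\theta(x)$, whose gradient equals $\psi'(\xi\cdot x)\theta(x)\xi+\psi(\xi\cdot x)\nabla\theta(x)$, converts the right-hand side into $\int_{\R^n}\psi'(\xi\cdot x)\theta(x)\,\d(\sigma\cdot\xi)(x)$. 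Disintegrating $\sigma\cdot\xi$ over $\pi_\xi$ and writing $\lambda=\pushforward{\pi_\xi}|\sigma\cdot\xi|=\sigma_\xi(t)\,\d t+\sigma_\xi^\perp$, this equals $\int_\R\psi'(t)g_\theta(t)\,\d\lambda(t)$; hence
$$\int_\R\psi'(t)F_\theta(t)\,\d t=\int_\R\psi'(t)g_\theta(t)\,\d\lambda(t)\qquad\forall\,\psi\in\cont_c^1(\R).$$

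Consequently the finite signed measure $g_\theta\lambda-F_\theta\,\d t$ on $\R$ has vanishing distributional derivative and therefore must itself vanish. The Lebesgue decomposition of $g_\theta\lambda$ is $g_\theta\sigma_\xi\,\d t+g_\theta\,\d\sigma_\xi^\perp$, with the first summand absolutely continuous and the second singular with respect to Lebesgue measure. Uniqueness of the Lebesgue decomposition therefore forces
$$F_\theta(t)=g_\theta(t)\sigma_\xi(t)\text{ for a.e. }t\in\R,\qquad g_\theta=0\ \sigma_\xi^\perp\text{-a.e.}$$
The first equality is exactly the claimed identity \eqref{eqn:slicingCharacterization} once $\sigma_\xi^\perp=0$ has been established.

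To rule out $\sigma_\xi^\perp\neq 0$, I would fix a countable family $\{\theta_k\}\subset\cont^\infty_c(\R^n)$ that separates finite Radon measures on any fixed ball containing $\supp\sigma$ (for instance, any countable sup-norm-dense subset of $\cont_c(\overline B_R)$ for $R$ large). The previous step produces, for each $k$, a $\sigma_\xi^\perp$-null set $M_k$ outside of which $\int_{H_{\xi,t}}\theta_k\,\d\nu_{\xi,t}=0$. Off the still-$\sigma_\xi^\perp$-null union $\bigcup_k M_k$, the measure $\nu_{\xi,t}$ is annihilated by every $\theta_k$ and therefore is the zero measure. But $\sigma_\xi^\perp\ll\lambda$, so the Disintegration Theorem supplies $\|\nu_{\xi,t}\|_\meas=1$ for $\sigma_\xi^\perp$-a.e.\ $t$---a contradiction unless $\sigma_\xi^\perp=0$, in which case $\lambda=\sigma_\xi(t)\,\d t$ and \eqref{eqn:slicingCharacterization} is proved.

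I expect the main obstacle to be the bookkeeping for the Lebesgue-decomposition-plus-separation argument: one must confirm that the same $\sigma_\xi^\perp$-null exceptional set works simultaneously for a countable family densely covering $\cont_c(\R^n)$, and that the normalization $\|\nu_{\xi,t}\|_\meas=1$ supplied by the disintegration is available on the singular part of $\lambda$ (and not only on its absolutely continuous part). The integration-by-parts step itself is routine once one checks that $F_\theta$ is a bounded, compactly supported function of $t$ and that the weak-divergence identity may indeed be tested against $\psi(\xi\cdot x)\theta(x)$.
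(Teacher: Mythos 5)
Your argument is correct, and at its core it rests on the same mechanism as the paper's proof: test the weak divergence against a product of a function of $\pi_\xi(x)=\xi\cdot x$ with another smooth function, and compare the outcome with the disintegration $\sigma\cdot\xi=\nu_{\xi,t}\otimes\lambda$. The execution differs in two useful ways. First, the paper restricts to tensor products $\phi(\pi_H(x))\psi(\pi_\xi(x))$ (with $H=\xi^\perp$) and works with the antiderivative $\Psi$ of $\psi$, so it initially obtains the identity only for functions constant in the $\xi$-direction and must then invoke density of tensor products in $\cont^0$ and $\cont^1$; you test directly against $\psi(\xi\cdot x)\theta(x)$ for arbitrary $\theta\in\cont^\infty$ with compact support, which removes that step. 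Second, and more substantially, to pass from the resulting identity of measures in $t$ to the pointwise statement \eqref{eqn:slicingCharacterization}, the paper establishes left-continuity of $t\mapsto\sigma_\xi(t)\nu_{\xi,t}$ and runs a mollification argument with the cutoffs $\chi^\rho$, whereas you obtain $\sigma_\xi^\perp=0$ and the almost-everywhere identity in one stroke from the du Bois--Reymond lemma (a compactly supported finite measure on $\R$ with vanishing distributional derivative equals $c\,\d t$ and hence vanishes) together with uniqueness of the Lebesgue decomposition. Your route is shorter and avoids the continuity analysis entirely; the paper's route has the side benefit of exhibiting $\sigma_\xi(t)\nu_{\xi,t}$ as a left-continuous representative.

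Two pieces of the bookkeeping you flagged do need to be carried out, but neither is an obstacle. Your argument yields, for each fixed $\theta$, a Lebesgue-null exceptional set of $t$; since the lemma is quantified as ``for almost all $t$ and any $\theta$'' (and is later used with varying $\theta$ at a fixed $t$), you should run the identity over a countable family dense in the $\cont^1$-norm and use that both sides of \eqref{eqn:slicingCharacterization} depend continuously on $\theta$, the left-hand side being bounded by $\sigma_\xi(t)\,\|\theta\|_\infty$ and the right-hand side by $\|\nabla\theta\|_\infty\|\sigma\|_\meas+\|\theta\|_\infty\|\dive\sigma\|_\meas$. And the normalization $\|\nu_{\xi,t}\|_\meas=1$ is supplied by the Disintegration Theorem for $\lambda$-a.e.\ $t$, hence in particular $\sigma_\xi^\perp$-a.e.\ since $\sigma_\xi^\perp\leq\lambda$, so the contradiction you draw on the singular part is legitimate.
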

\begin{proof}
Abbreviate $H=\xi^\perp=H_{\xi,0}$ with corresponding orthogonal projection $\pi_H$, let $\phi\in\cont^\infty(H)$ and $\psi\in\cont^\infty(\R)$ be compactly supported, and define
\begin{equation*}
I(\phi,\psi)=\int_{\R^n}\phi(\pi_H(x))\psi(\pi_\xi(x))\,\d(\sigma\cdot\xi)(x)\,.
\end{equation*}
Introducing $\Psi(t)=\int_{-\infty}^t\psi(s)\,\d s$ we obtain via the chain and product rule
\begin{equation*}
(\phi\circ\pi_H)(\psi\circ\pi_\xi)\xi
=(\phi\circ\pi_H)\nabla[\Psi\circ\pi_\xi]
=\nabla[(\phi\circ\pi_H)(\Psi\circ\pi_\xi)]-\nabla[\phi\circ\pi_H](\Psi\circ\pi_\xi)
\end{equation*}
so that (denoting by $\chi_A$ the characteristic function of a set $A$)
\begin{align*}
I(\phi,\psi)
&=\int_{\R^n}\nabla[(\phi\circ\pi_H)(\Psi\circ\pi_\xi)]\cdot\d\sigma-\int_{\R^n}\nabla[\phi\circ\pi_H](\Psi\circ\pi_\xi)\cdot\d\sigma\\
&=-\int_{\R^n}(\phi\circ\pi_H)(\Psi\circ\pi_\xi)\,\d\dive\sigma-\int_{\R^n}\nabla[\phi\circ\pi_H](\Psi\circ\pi_\xi)\cdot\d\sigma\\
&=-\int_{\R^n}\phi(\pi_H(x))\left(\int_{-\infty}^{\pi_\xi(x)}\psi(s)\,\d s\right)\,\d\dive\sigma(x)\\
&\qquad-\int_{\R^n}\nabla[\phi\circ\pi_H](x)\left(\int_{-\infty}^{\pi_\xi(x)}\psi(s)\,\d s\right)\cdot\d\sigma\\
&=-\int_{\R^n}\phi(\pi_H(x))\left(\int_\R\chi_{\{\xi\cdot x\geq s\}}\psi(s)\,\d s\right)\,\d\dive\sigma(x)\\
&\qquad-\int_{\R^n}\nabla[\phi\circ\pi_H](x)\left(\int_\R\chi_{\{\xi\cdot x\geq s\}}\psi(s)\,\d s\right)\cdot\d\sigma\,.
\end{align*}
(Note that we could just as well have used $\chi_{\{\xi\cdot x>s\}}$ instead of $\chi_{\{\xi\cdot x\geq s\}}$,
which would ultimately lead to integration domains $\{\xi\cdot x\leq t\}$ in \eqref{eqn:slicingCharacterization};
for almost all $t$ this will be the same.)
Applying the Fubini--Tonelli Theorem we obtain
\begin{align*}
I(\phi,\psi)
&=-\int_\R\psi(s)\left[\int_{\{\xi\cdot x\geq s\}}\nabla[\phi\circ\pi_H](x)\cdot\d\sigma(x)
+\int_{\{\xi\cdot x\geq s\}}\phi(\pi_H(x))\,\d\dive\sigma(x)\right]\,\d s\\
&=\int_\R\psi(s)\left[\int_{\{\xi\cdot x<s\}}\nabla[\phi\circ\pi_H](x)\cdot\d\sigma(x)
+\int_{\{\xi\cdot x<s\}}\phi(\pi_H(x))\,\d\dive\sigma(x)\right]\,\d s\,,
\end{align*}
where in the second step we just added $0=\int_{\R^n}\nabla[\phi\circ\pi_H]\cdot\d\sigma+\int_{\R^n}\phi\circ\pi_H\,\d\dive\sigma$ in the square brackets.
On the other hand, using the disintegration of $\sigma\cdot\xi$ we also have
\begin{align*}
I(\phi,\psi)
&=\int_{\R^n}\phi(\pi_H(x))\psi(\pi_\xi(x))\,\d(\sigma\cdot\xi)(x)\\
&=\int_\R\left[\psi(s)\int_{H_{\xi,s}}\phi(\pi_H(y))\,\d\nu_{\xi,s}(y)\right](\sigma_\xi(s)\,\d s+\d\sigma_\xi^\perp(s))\,.
\end{align*}
Comparing both expressions for $I(\phi,\psi)$ we can identify
\begin{multline*}
\left[\int_{H_{\xi,s}}\phi(\pi_H(y))\,\d\nu_{\xi,s}(y)\right](\sigma_\xi(s)\,\d s+\d\sigma_\xi^\perp(s))\\
=\left[\int_{\{\xi\cdot x<s\}}\nabla[\phi\circ\pi_H](x)\cdot\d\sigma(x)
+\int_{\{\xi\cdot x<s\}}\phi(\pi_H(x))\,\d\dive\sigma(x)\right]\,\d s\,.
\end{multline*}
Since the right-hand side has no singular component with respect to the Lebesgue measure, we deduce $\left[\int_{H_{\xi,s}}\phi(\pi_H(y))\,\d\nu_{\xi,s}(y)\right]\sigma_\xi^\perp(s)=0$.
Now note that any compactly supported function in $\cont^0(\R^n)$ or $\cont^1(\R^n)$ can be arbitrarily well approximated (in the respective norm)
by finite linear combinations of tensor products $(\phi\circ\pi_H)(\psi\circ\pi_\xi)$ with $\phi\in\cont^\infty(H)$ and $\psi\in\cont^\infty(\R)$ with compact support.
Thus, the above implies $\int_\R\left[\int_{H_{\xi,s}}\theta(y)\,\d\nu_{\xi,s}(y)\right]\d\sigma_\xi^\perp(s)=0$ for any compactly supported $\theta\in\cont^0(\R^n)$ so that
\begin{equation*}
\nu_{\xi,s}\otimes\sigma_\xi^\perp(s)=0
\qquad\text{and thus}\qquad
\sigma_\xi^\perp(s)=0\,.
\end{equation*}
Summarizing, we have $\sigma\cdot\xi=\sigma_\xi(s)\nu_{\xi,s}\otimes\d s$ and
\begin{multline*}
\int_{H_{\xi,s}}\phi(\pi_H(x))\sigma_\xi(s)\,\d\nu_{\xi,s}(x)\,\d s\\
=\int_{\{\xi\cdot x<s\}}\nabla[\phi\circ\pi_H](x)\cdot\d\sigma(x)
+\int_{\{\xi\cdot x<s\}}\phi(\pi_H(x))\d\dive\sigma(x)
\end{multline*}
for all compactly supported $\phi\in\cont^\infty(H)$.
Note that the right-hand side is left-continuous in $s$ so that the left-hand side is as well.
Consequently, $\sigma_\xi(s)\nu_{\xi,s}$ is left-continuous in $s$ with respect to weak-* convergence.
Now let $\chi\in\cont^\infty(\R)$ with $\chi=1$ on $(-\infty,0]$, $\chi=0$ on $[1,\infty)$, and $0\leq\chi\leq1$, and define for $\rho>0$
\begin{equation*}
\chi^\rho(x)=\chi\left(\tfrac{\pi_\xi(x)-t}\rho\right),\quad
\sigma^\rho=\chi^\rho\sigma,\quad
\mu^\rho=\chi^\rho\dive\sigma,\quad
\sigma_{\xi,s}^\rho=\tfrac1\rho\chi'\left(\tfrac{\pi_\xi(\cdot)-t}\rho\right)\sigma_\xi(s)\nu_{\xi,s}.
\end{equation*}
In the distributional sense we have
\begin{equation*}
\dive\sigma^\rho=\mu^\rho+\sigma_{\xi,s}^\rho\otimes\d s
\end{equation*}
so that for any compactly supported $\theta\in\cont^\infty(\R^n)$ we have
\begin{equation*}
\int_{\R^n}\nabla\theta\cdot\d\sigma^\rho+\int_{\R^n}\theta\,\d\mu^\rho=-\int_\R\int_{H_{\xi,s}}\theta\,\d\sigma_{\xi,s}^\rho\,\d s\,.
\end{equation*}
Letting $\rho\to0$ and using the left-continuity of $\sigma_\xi(s)\nu_{\xi,s}$ in $s$ we arrive at \eqref{eqn:slicingCharacterization}.
\end{proof}

We now define the slice of a divergence measure vector field as the measure obtained via disintegration with respect to the one-dimensional Lebesgue measure.

\begin{definition}[Sliced sets, functions, and measures]\label{def:slicing}
Let $\xi\in \Disk^{n-1}$ and $t\in\R$.
\begin{enumerate}
\item
For $A\subset\R^n$ we define the \emph{sliced set} $A_{\xi,t}=A\cap H_{\xi,t}$.
\item
For $f:A\to\R$ we define the \emph{sliced function} $f_{\xi,t}:A_{\xi,t}\to\R$, $f_{\xi,t}=f|_{A_{\xi,t}}$.
For $f:A\to\R^n$ we define $f_{\xi,t}:A_{\xi,t}\to\R^n$, $f_{\xi,t}=\xi\cdot f|_{A_{\xi,t}}$.
\item
We define the \emph{sliced measure} of a compactly supported divergence measure vector field $\sigma$ as
\begin{equation*}
\sigma_{\xi,t}=\sigma_\xi(t)\;\nu_{\xi,t}\,.
\end{equation*}
By \Cref{lemma:L1density} it holds $\sigma\cdot\xi=\sigma_{\xi,t}\otimes\d t$.
\end{enumerate}
\end{definition}

\begin{remark}[Properties of sliced functions and measures]\label{rem:slicedFunction}
\begin{enumerate}
\item
By Fubini's theorem it follows that for any function $f$ of Sobolev-type $W^{m,p}$
the corresponding sliced function $f_{\xi,t}$ is well-defined and also of Sobolev-type $W^{m,p}$ for almost all $\xi\in \Disk^{n-1}$ and $t\in\R$.
For the same reason, strong convergence $f_j\to_{j\to\infty} f$ in $W^{m,p}$ implies strong convergence $(f_j)_{\xi,t}\to f_{\xi,t}$ in $W^{m,p}$ on the sliced domain.
\item
The definitions of sliced functions and measures are consistent in the following sense.
If we identify a Lebesgue function $f$ with the measure $\chi=f\mathcal L$ for $\mathcal L$ the Lebesgue measure,
then the same identification holds between $f_{\xi,t}$ and $\chi_{\xi,t}$ for almost all $\xi\in \Disk^{n-1}$ and $t\in\R$.
\item\label{enm:disintegrationProperty}
Let $\sigma$ be a divergence measure vector field, then the properties \cite[Thm.~2.28]{AmFuPa00} of the disintegration $\sigma\cdot\xi=\nu_{\xi,t}\otimes \pushforward{\pi_\xi} |\sigma\cdot\xi|(t)=\nu_{\xi,t}\otimes\sigma_\xi(t)\,\d t=\sigma_{\xi,t}\otimes\d t$ immediately imply the following.
The map $t\mapsto \|\sigma_{\xi,t}\|_{\meas}$ is integrable and satisfies
\begin{equation*}\label{eq:sigmaDis1}
\int_{\R}\|\sigma_{\xi,t}\|_\meas\,\d t = \int_{\R} \sigma_\xi(t)\,\d t =\|\sigma\cdot \xi\|_\meas\,.
\end{equation*}
Furthermore, for any measurable function $f:\R^n\to\R$, absolutely integrable with respect to $|\sigma\cdot \xi|$, it holds
\begin{equation*}\label{eq:sigmaDis2}
\int_{\R^n}f(x)\,\d \sigma\cdot\xi
=\int_\R \int_{H_{\xi,t}} f(x)\,\d \nu_{\xi,t}(x)\,\d\pushforward{\pi_\xi}|\sigma\cdot\xi|(t)
=\int_\R \int_{H_{\xi,t}} f(x)\,\d \sigma_{\xi,t}(x)\,\d t\,.
\end{equation*}
\end{enumerate}
\end{remark}

We briefly relate our definition of sliced measures to other notions of slices from the literature.

\begin{remark}[Notions of slices]\label{rem:notionsSlicing}
\begin{enumerate}
\item
Let $\Lip(A)$ denote the set of bounded Lipschitz functions on $A\subset\R^n$.
An alternative definition of the slice of a divergence measure vector field $\sigma$ was introduced by \v{S}ilhav\'y \cite{Si07} as the linear operator
\begin{equation}\label{eqn:sliceSilhavy}
\sigma_{\xi,t}:\Lip(H_{\xi,t})\to\R,\quad
\sigma_{\xi,t}(\varphi|_{H_{\xi,t}})=\lim_{\delta\searrow0}\frac1\delta\int_{\{x\in\R^n\,|\,t-\delta<x\cdot\xi<t\}}\varphi\xi\cdot\d\sigma
\end{equation}
for all $\varphi\in\Lip(\R^n)$ (the right-hand side is well-defined and only depends on $\varphi|_{H_{\xi,t}}$ \cite[Thm.\,3.5 \& Thm.\,3.6]{Si07}).
This $\sigma_{\xi,t}$ equals the so-called normal trace of $\sigma$ on $H_{\xi,t}$ (see \cite{Si07} for its definition and properties).
In general it is not a measure but continuous on $\Lip(H_{\xi,t})$ in the sense
\begin{equation*}
\sigma_{\xi,t}(\varphi)\leq(\|\sigma\|_\meas+\|\dive\sigma\|_\meas)\|\varphi\|_{W^{1,\infty}}
\quad\text{for all }\varphi\in\Lip(H_{\xi,t})\,.
\end{equation*}
\item
Interpreting a divergence measure vector field as a $1$-current or a flat $1$-chain,
\v{S}ilhav\'y's definition of $\sigma_{\xi,t}$ is identical to the classical slice of $\sigma$ on $H_{\xi,t}$ as for instance defined in \cite{Wh99} or \cite[4.3.1]{Fe69}
(note that \v{S}ilhav\'y's definition corresponds to \cite[(3.8)]{Si07}, whose analogue for currents is \cite[4.3.2(5)]{Fe69}).
\item\label{enm:divCharacterization}
Our notion of a sliced measure from \Cref{def:slicing} is equivalent to both above-mentioned notions.
Indeed,
\eqref{eqn:slicingCharacterization} implies
\begin{equation*}
\sigma_{\xi,t}=(\dive\sigma)\restr\{x\cdot\xi<t\}-\dive(\sigma\restr\{x\cdot\xi<t\})\,,
\end{equation*}
which shows that the sliced measure represents the normal flux through the hyperplane $H_{\xi,t}=\{x\cdot\xi=t\}$.
This, however, is the same characterization as given in \cite[(3.6)]{Si07} and \cite[4.2.1]{Fe69} for both above notions of slices.
\end{enumerate}
\end{remark}

We conclude the section with several properties needed for the $\Gamma-\liminf$ inequality.
The following result makes use of the Kantorovich--Rubinstein norm (see for instance \cite[eq.\,(2)\,\&\,(5)]{LeloScVa14}; in geometric measure theory it is known as the flat norm) on $\meas(\R^n)$, defined by
\begin{align*}
\|\mu\|_{\mathrm{KR}}
&=\inf\{\|\mu_1\|_\meas+\|\mu_2\|_\meas\,|\,\mu_1\in\meas(\R^n),\,\mu_2\in\meas(\R^n;\R^n),\,\mu=\mu_1+\dive\mu_2\}\\
&=\sup\left\{\int_\Omega f\,\d\mu\,\middle|\,f\text{ Lipschitz with constant }1,\,|f|\leq1\right\}\,.
\end{align*}
For measures of uniformly bounded support and uniformly bounded mass it is known to metrize weak-$*$ convergence (see for instance \cite[Rem.\,2.29(3)-(4)]{BrWi17}).
We will furthermore make use of the following fact.
Let $T_s:x\mapsto x-s\xi$ be the translation by $s$ in direction $-\xi$.
It is straightforward to check that for any divergence measure vector field $\mu\in\meas(\R^n;\R^n)$ we have
\begin{equation*}
\dive(\pushforward{\pi_{H_{\xi,t}}}(\mu-\mu\cdot\xi\,\xi))=\pushforward{\pi_{H_{\xi,t}}}(\dive\mu)\,.
\end{equation*}
As a consequence, for any $\mu\in\meas(H_{\xi,t})$ and $\nu\in\meas(H_{\xi,t+s})$ we have
\begin{equation*}
\|\mu-\nu\|_{\mathrm{KR}}
\geq\|\mu-\pushforward{T_s}\nu\|_{\mathrm{KR}}\,.
\end{equation*}
Indeed, let $\delta>0$ arbitrary and $\mu_1\in\meas(\R^n)$, $\mu_2\in\meas(\R^n;\R^n)$ with $\mu-\nu=\mu_1+\dive\mu_2$ such that $\|\mu-\nu\|_{\mathrm{KR}}\geq\|\mu_1\|_\meas+\|\mu_2\|_\meas-\delta$,
then $\tilde\mu_1=\pushforward{\pi_{H_{\xi,t}}}{\mu_1}$ and $\tilde\mu_2=\pushforward{\pi_{H_{\xi,t}}}(\mu_2-\mu_2\cdot\xi\,\xi)$ satisfy $\mu-\pushforward{T_s}\nu=\tilde\mu_1+\dive\tilde\mu_2$
and thus $$\|\mu-\pushforward{T_s}\nu\|_{\mathrm{KR}}\leq\|\tilde\mu_1\|_\meas+\|\tilde\mu_2\|_\meas\leq\|\mu_1\|_\meas+\|\mu_2\|_\meas\leq\|\mu-\nu\|_{\mathrm{KR}}+\delta\,.$$

\begin{theorem}[Weak convergence of sliced measures]\label{thm:weakConvergenceSlices}
Let $\sigma^j\weakstarto\sigma$ as $j\to\infty$ for a sequence $\{\sigma^j\}$ of compactly supported divergence measure vector fields with uniformly bounded $\|\dive\sigma^j\|_\meas$.
Then for almost all $\xi\in\Disk^{n-1}$ and $t\in\R$ we have $$\sigma_{\xi,t}^j\weakstarto\sigma_{\xi,t}\,.$$
\end{theorem}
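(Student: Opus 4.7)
The plan is to reduce weak-$*$ convergence of the slices $\sigma^j_{\xi,t}$ to the pointwise-in-$t$ convergence of their actions on a countable dense family of Lipschitz test functions, using Lemma~\ref{lemma:L1density} as the key algebraic bridge. Concretely, for every $\xi\in\Disk^{n-1}$, every compactly supported Lipschitz $\theta$, and a.e.\ $t\in\R$, Lemma~\ref{lemma:L1density} gives
\[
\sigma^j_{\xi,t}(\theta)=\int_{\{x\cdot\xi<t\}}\nabla\theta\cdot\d\sigma^j+\int_{\{x\cdot\xi<t\}}\theta\,\d\dive\sigma^j\,,
\]
each summand being a cumulative distribution of a finite signed measure on $\R$ of total variation bounded by $\|\nabla\theta\|_\infty\|\sigma^j\|_\meas$ or $\|\theta\|_\infty\|\dive\sigma^j\|_\meas$. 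Hence the function $t\mapsto\sigma^j_{\xi,t}(\theta)$ is uniformly bounded in $BV(\R)$ and satisfies the uniform bound $|\sigma^j_{\xi,t}(\theta)|\leq(\|\sigma^j\|_\meas+\|\dive\sigma^j\|_\meas)\max\{\|\theta\|_\infty,\|\nabla\theta\|_\infty\}$ in $j$ and $t$.

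The next step is to pass weak-$*$ convergence through the right-hand side by a Portmanteau argument. Distributional differentiation together with the uniform divergence bound shows that $\sigma^j\weakstarto\sigma$ implies $\dive\sigma^j\weakstarto\dive\sigma$ in $\meas(\R^n)$. For each fixed $\xi$, the pushforward measures $\pushforward{\pi_\xi}|\sigma|$ and $\pushforward{\pi_\xi}|\dive\sigma|$ are finite Radon measures on $\R$ and thus have only countably many atoms, so $|\sigma|(H_{\xi,t})=|\dive\sigma|(H_{\xi,t})=0$ for all but countably many $t$. Since the bounded Borel integrands $\nabla\theta\,\chi_{\{x\cdot\xi<t\}}$ and $\theta\,\chi_{\{x\cdot\xi<t\}}$ have discontinuity set contained in $H_{\xi,t}$, the Portmanteau theorem yields $\sigma^j_{\xi,t}(\theta)\to\sigma_{\xi,t}(\theta)$ for a.e.\ $t$, for every fixed compactly supported Lipschitz $\theta$.

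A diagonal argument over a countable family $\{\theta_k\}_{k\in\N}$ of Lipschitz compactly supported functions, dense in $\cont^0_c(\R^n)$ in supremum norm, then furnishes, for each $\xi$, a full-measure set $T_\xi\subset\R$ on which $\sigma^j_{\xi,t}(\theta_k)\to\sigma_{\xi,t}(\theta_k)$ for every $k$. The uniform Lipschitz bound from the first step extends this convergence to all Lipschitz test functions, which in view of Remark~\ref{rem:flatChains} (applied to flat $0$-chains) is exactly the claimed weak-$*$ convergence $\sigma^j_{\xi,t}\weakstarto\sigma_{\xi,t}$. The main obstacle lies in the Portmanteau step: one must test weak-$*$ convergence of vector-valued Radon measures against the discontinuous half-space indicator $\chi_{\{x\cdot\xi<t\}}$, and this is precisely what the uniform divergence bound makes possible, both ensuring $\dive\sigma^j\weakstarto\dive\sigma$ and restricting the exceptional hyperplanes with nonvanishing $|\sigma|$- or $|\dive\sigma|$-mass to a countable set.
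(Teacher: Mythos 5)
Your overall route---expressing $\sigma^j_{\xi,t}(\theta)$ through the half-space identity \eqref{eqn:slicingCharacterization}, passing to the limit in $j$ for fixed $t$ and $\theta$, and then running a diagonal argument over a countable family of test functions---is genuinely different from the paper's proof (which mollifies the slices in the $t$-variable and estimates $\|\sigma^j_{\xi,t}-\sigma^j_{\xi,\zeta,t}\|_{\mathrm{KR}}$), and it could be made to work. But the Portmanteau step is a real gap. For signed or vector-valued measures, $\sigma^j\weakstarto\sigma$ together with $|\sigma|(H_{\xi,t})=0$ does \emph{not} imply $\int\chi_{\{x\cdot\xi<t\}}f\,\d\sigma^j\to\int\chi_{\{x\cdot\xi<t\}}f\,\d\sigma$; the correct hypothesis is that a weak-$*$ limit of the \emph{total variations} $|\sigma^j|$ assigns no mass to the discontinuity set, and this limit can charge $H_{\xi,t}$ even when $|\sigma|$ does not, because positive and negative concentrations on either side of the hyperplane may cancel in the limit. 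A one-dimensional caricature is $\mu^j=\delta_{1/j}-\delta_{-1/j}\weakstarto0$ with $\int\chi_{(-\infty,0)}\,\d\mu^j=-1\not\to0$. In the present setting, let $\sigma^j$ be the unit counterclockwise circulation along the boundary of $[-1/j,1/j]\times[0,1]$: it is divergence-free, has uniformly bounded mass, and $\sigma^j\weakstarto\sigma=0$, yet $\sigma^j_{e_1,0}=\delta_{(0,0)}-\delta_{(0,1)}$ for every $j$, so the slices at $t=0$ do not converge to $\sigma_{e_1,0}=0$ even though $|\sigma|(H_{e_1,0})=|\dive\sigma|(H_{e_1,0})=0$. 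Your criterion would wrongly certify this $t$ as good.

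The theorem survives because the genuine exceptional set is still countable, but to see this you must pass to a subsequence along which $\nu^j=|\sigma^j|+|\dive\sigma^j|\weakstarto\nu$ and exclude the countably many $t$ with $\nu(H_{\xi,t})>0$ (in the example above $\nu=2\hdone\restr(\{0\}\times[0,1])$, which correctly flags $t=0$). This is precisely the device the paper uses, and with it your scheme goes through; alternatively, your observation that $t\mapsto\sigma^j_{\xi,t}(\theta)$ is uniformly $BV$ with distributional derivatives converging weakly-$*$ yields $L^1_{\mathrm{loc}}$ convergence of these functions and hence a.e.\ convergence along a further subsequence, bypassing Portmanteau altogether. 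A secondary point to tighten: convergence against a sup-norm-dense family of Lipschitz functions upgrades to weak-$*$ convergence of the slices only once $\|\sigma^j_{\xi,t}\|_\meas$ is controlled for the fixed $t$ in question (your uniform estimate degrades with $\|\nabla\theta\|_\infty$); this can be arranged for a.e.\ $t$ via $\int_\R\|\sigma^j_{\xi,t}\|_\meas\,\d t=\|\sigma^j\cdot\xi\|_\meas\leq C$ and Fatou, but it should be stated.
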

\begin{proof}
It suffices to show $\sigma_{\xi,t}^j\weakstarto\sigma_{\xi,t}$ for a subsequence.

Consider the measures $\nu^j=|\sigma^j|+|\dive\sigma^j|$.
Since $\|\nu^j\|_\meas$ is uniformly bounded, a subsequence converges weakly-$*$ to some compactly supported nonnegative $\nu\in\meas(\R^n)$ (the subsequence is still indexed by $j$).
For $I\subset\R$ introduce the notation $H_{\xi,I}=\bigcup_{t\in I}H_{\xi,t}$.
Then for almost all $t\in\R$, $\nu(H_{\xi,[t-s,t+s]})\to0$ as well as $(|\sigma|+|\dive\sigma|)(H_{\xi,[t-s,t+s]})\to0$ as $s\searrow0$.
For such a $t$ we show convergence of $\sigma_{\xi,t}^j-\sigma_{\xi,t}$ to zero in the Kantorovich--Rubinstein norm which implies weak-$*$ convergence.
To this end fix some arbitrary $\delta>0$.
Given $\zeta>0$ let $\rho_\zeta=\rho(\cdot/\zeta)/\zeta$ for a nonnegative smoothing kernel $\rho\in\cont^\infty(\R)$ with support in $[-1,1]$ and $\int_\R\rho\,\d t=1$.
For any compactly supported divergence measure vector field $\lambda$ we now define the convolved slice $\lambda_{\xi,\zeta,t}$ by
\begin{align*}
\int_{H_{\xi,t}}g\,\d\lambda_{\xi,\zeta,t}
&=\int_\R\rho_\zeta(-s)\int_{H_{\xi,t}}g\,\d\pushforward{T_s}{\lambda_{\xi,t+s}}\,\d s\\
&=\int_\R\rho_\zeta(-s)\int_{H_{\xi,t}}g\circ T_s\,\d\lambda_{\xi,t+s}\,\d s
\quad\forall g\in\cont(H_{\xi,t})\,,
\end{align*}
where $T_s:x\mapsto x-s\xi$ is the translation by $s$ in direction $-\xi$.
By \Cref{rem:slicedFunction}\eqref{enm:disintegrationProperty} we have $\sigma^j_{\xi,\zeta,t}\weakstarto\sigma_{\xi,\zeta,t}$.
Furthermore, there exist $\zeta>0$ and $J\in\N$ such that $\|\sigma_{\xi,t}-\sigma_{\xi,\zeta,t}\|_{\mathrm{KR}}\leq\frac\delta3$ and $\|\sigma_{\xi,t}^j-\sigma^j_{\xi,\zeta,t}\|_{\mathrm{KR}}\leq\frac\delta3$ for all $j\geq J$.
Indeed, for a compactly supported divergence measure vector field $\lambda$ we have
\begin{align*}
\|\lambda_{\xi,t}-\lambda_{\xi,\zeta,t}\|_{\mathrm{KR}}
&\leq\int_\R\rho_\zeta(-s)\|\lambda_{\xi,t}-\pushforward{T_s}{\lambda_{\xi,t+s}}\|_{\mathrm{KR}}\,\d s\\
&\leq\int_\R\rho_\zeta(-s)\|\lambda_{\xi,t}-\lambda_{\xi,t+s}\|_{\mathrm{KR}}\,\d s\\
&=\int_\R\rho_\zeta(-s)\|\dive(\lambda\restr H_{\xi,[t,t+s)})-(\dive\lambda)\restr H_{\xi,[t,t+s)}\|_{\mathrm{KR}}\,\d s\\
&\leq\int_\R\rho_\zeta(-s)\left[|\lambda|(H_{\xi,[t,t+s)})+|\dive\lambda|(H_{\xi,[t,t+s)})\right]\,\d s\\
&\leq|\lambda|(H_{\xi,[t-\zeta,t+\zeta]})+|\dive\lambda|(H_{\xi,[t-\zeta,t+\zeta]})\,,
\end{align*}
where in the equality we employed \Cref{rem:notionsSlicing}\eqref{enm:divCharacterization}.
Thus, we can simply pick $\zeta$ such that $|\sigma|(H_{\xi,[t-\zeta,t+\zeta]})+|\dive\sigma|(H_{\xi,[t-\zeta,t+\zeta]})\leq\frac\delta3$ and $\nu(H_{\xi,[t-\zeta,t+\zeta]})\leq\frac\delta6$,
while we choose $J$ such that $(\nu^j-\nu)(H_{\xi,[t-\zeta,t+\zeta]})\leq\frac\delta6$ for all $j>J$.
Now let $\bar J\geq J$ such that $\|\sigma^j_{\xi,\zeta,t}-\sigma_{\xi,\zeta,t}\|_{\mathrm{KR}}\leq\frac\delta3$ for all $j\geq\bar J$, then we obtain
\begin{equation*}
\|\sigma_{\xi,t}^j-\sigma_{\xi,t}\|_{\mathrm{KR}}
\leq\|\sigma_{\xi,t}^j-\sigma^j_{\xi,\zeta,t}\|_{\mathrm{KR}}
+\|\sigma_{\xi,\zeta,t}^j-\sigma_{\xi,\zeta,t}\|_{\mathrm{KR}}
+\|\sigma_{\xi,\zeta,t}-\sigma_{\xi,t}\|_{\mathrm{KR}}
\leq\delta
\end{equation*}
for all $j>\bar J$.
The arbitrariness of $\delta$ concludes the proof.
\end{proof}

\begin{remark}[Flat convergence of sliced currents]
The convergence from \Cref{thm:weakConvergenceSlices} is consistent with the following property of slices of $1$-currents:
If $\sigma^j$, $j\in\N$, is a sequence of $1$-currents of finite mass with $\sigma^j\to\sigma$ in the flat norm,
then (potentially after choosing a subsequence) $\sigma^j_{\xi,t}\to\sigma_{\xi,t}$ in the flat norm for almost every $\xi\in \Disk^{n-1}$, $t\in\R$ (see \cite[step\,2 in proof of Prop.\,2.5]{CoRoMa17} or \cite[Sec.\,3]{Wh99}).
\end{remark}

\begin{remark}[Characterization of sliced measures]\label{rem:characterizationSlicing}
\begin{enumerate}
\item
Let the compactly supported divergence measure vector field $\sigma$ be countably $1$-rectifiable, that is, $\sigma=\theta m\hdone\restr S$ for a countably $1$-rectifiable set $S\subset\R^n$ and $\hd^1\restr S$-measurable functions $m:S\to[0,\infty)$ and $\theta:S\to \Disk^{n-1}$, tangent to $S$ $\hdone$-almost everywhere.
Then the coarea formula for rectifiable sets \cite[Thm.\,3.2.22]{Fe69} implies
$|\theta\cdot\xi|\hdone\restr S=\hd^0\restr S_{\xi,t}\otimes\hd^1(t)$ so that
\begin{equation*}
\int_{\R^n}f\,\d\sigma\cdot\xi
=\int_Sfm\theta\cdot\xi\,\d\hd^1
=\int_\R\int_{S_{\xi,t}}fm\mathop{\mathrm{sgn}}(\xi\cdot\theta)\,\d\hd^0\,\d t
\end{equation*}
for any Borel function $f$. Hence, for almost all $t$,
\begin{equation*}
\sigma_{\xi,t}=\mathop{\mathrm{sgn}}(\xi\cdot\theta)\,m\hd^0\restr S_{\xi,t}\,.
\end{equation*}
The choice $f=\frac{\tau(m)}m\mathop{\mathrm{sgn}}(\xi\cdot\theta)$ yields
\begin{equation*}
\int_S\tau(m)|\theta\cdot\xi|\,\d\hdone
=\int_{\R}\int_{S_{\xi,t}}\tau(m)\,\d\hd^0\,\d t\,.
\end{equation*}
\item
Let the compactly supported divergence measure vector field $\sigma$ be $\hdone$-diffuse, that is, it is singular with respect to the one-dimensional Hausdorff measure on any countably $1$-rectifiable set.
Then for almost all $\xi\in \Disk^{n-1}$ and $t\in\R$, $\sigma_{\xi,t}$ is $\hd^0$-diffuse, that is, it does not contain any atoms.
Indeed, let $\sigma_{\xi,t}$ have an atom at $x\in H_{\xi,t}$, then
\begin{equation*}
x\in\Theta(\sigma)=\left\{x\in\R^n\,\middle|\,\liminf_{\rho\searrow0}|\sigma|(B_{\rho}(x))/\rho>0\right\}\,,
\end{equation*}
where $B_{\rho}(x)$ denotes the open ball of radius $\rho$ centred at $x$.
This can be deduced as follows.
Let $\phi\in\cont^\infty(\R)$ be smooth and even with support in $(-1,1)$ and $\phi(0)=\mathop{\mathrm{sgn}}(\sigma_{\xi,t}(\{x\}))$.
Further abbreviate $K=\max_{x\in\R}|\phi'(x)|>0$ and $\phi_\rho=\phi(|\cdot-x|/\rho)$ for any $\rho>0$.
Equation\,\eqref{eqn:slicingCharacterization} now implies
\begin{align*}
\int_{H_{\xi,t}}\phi_\rho\,\d\sigma_{\xi,t}
&=\int_{\{\xi\cdot x<t\}}\phi_\rho\,\d\dive\sigma+\int_{\{\xi\cdot x<t\}}\nabla\phi_\rho\cdot\d\sigma\\
&\leq\int_{\{\xi\cdot x<t\}}\phi_\rho\,\d\dive\sigma+K\frac{|\sigma|(B_\rho(x))}\rho\,.
\end{align*}
Taking on both sides the limit inferior as $\rho\to0$ we obtain $|\sigma_{\xi,t}|(\{x\})\leq K\liminf_{\rho\searrow0}|\sigma|(B_{\rho}(x))/\rho$, as desired.

As a result, for a given $\xi$ the set of $t$ such that $\sigma_{\xi,t}$ is not $\hd^0$-diffuse is a subset of $\pi_\xi(\Theta)$.
Thus it remains to show that for almost all $\xi\in \Disk^{n-1}$ the set $\pi_\xi(\Theta)$ is a Lebesgue-nullset.
Writing
\begin{equation*}
\Theta=\bigcup_{p\in\N}\Theta_p
\quad\text{for }
\Theta_p=\left\{x\in\R^n\,\middle|\,\liminf_{\rho\searrow0}\sigma(B_{\rho}(x))/\rho\geq\frac1p\right\}\,,
\end{equation*}
it actually suffices to show that $\pi_\xi(\Theta)$ is a Lebesgue-nullset for any $p\in\N$.
Now by the properties of the $1$-dimensional density of a measure \cite[Thm.\,256]{AmFuPa00},
\begin{equation*}
\hd^1(\Theta_p)\leq\frac p2|\sigma|(\Theta_p)
\end{equation*}
so that $\Theta_p$ can be decomposed into a countably $1$-rectifiable and a purely $1$-unrectifiable set \cite[p.\,83]{AmFuPa00},
\begin{equation*}
\Theta_p=\Theta_p^r\cup\Theta_p^u
\end{equation*}
($\Theta_p^u$ purely $1$-unrectifiable means $\hd^1(\Theta_p^u\cap f(\R))=0$ for any Lipschitz $f:\R\to\R^n$).
By the $\hdone$-diffusivity assumption on $\sigma$ we have (abbreviating the Lebesgue measure by $\mathcal L$)
\begin{equation*}
\mathcal L(\pi_\xi(\Theta_p^r))\leq\hd^1(\Theta_p^r)\leq\frac p2|\sigma|(\Theta_p^r)=0\,,
\end{equation*}
and by a result due to Besicovitch \cite[Thm.\,2.65]{AmFuPa00} we have
\begin{equation*}
\mathcal L(\pi_\xi(\Theta_p^u))=0
\end{equation*}
for almost all $\xi\in\Disk^{n-1}$.
Thus, for almost all $\xi\in\Disk^{n-1}$ we have $\mathcal L(\pi_\xi(\Theta_p))=0$, as desired.

\begin{remark}[Characterization of divergence measure vector fields]
By a result due to Smirnov \cite{Sm93}, any divergence measure vector field $\sigma$ can be decomposed into simple oriented curves
$\sigma_\gamma=\pushforward{\gamma}{\dot\gamma\d s\restr[0,1]}$ with $\gamma:[0,1]\to\R^n$ a Lipschitz curve and $\d s$ the Lebesgue measure, that is,
\begin{equation*}
\sigma=\int_{ J} \sigma_\gamma\,\d\mu_\sigma(\gamma)
\end{equation*}
with $J$ the set of Lipschitz curves and $\mu_\sigma$ a nonnegative Borel measure.
The results of this section can alternatively be derived by resorting to this characterization,
since the slice of a simple oriented curve $\sigma_\gamma$ can be explicitly calculated.
\end{remark}

\end{enumerate}
\end{remark}

\subsection{The $\Gamma-\liminf$ inequality}

We now prove the desired $\liminf$-inequality, which as already anticipated will be obtained by slicing.

\begin{proposition}[$\Gamma-\liminf$ of phase field functional]\label{thm:GammaLiminf}
Let $\mu_+,\mu_-\in\prob(\overline\Omega)$.
We have
\begin{equation*}
\Gamma-\liminf_{\varepsilon\to0} E_\varepsilon^{\mu_+,\mu_-}= E^{\mu_+,\mu_-}
\end{equation*}
with respect to weak-$*$ convergence in $\meas(\overline\Omega;\R^2)$ and strong convergence in $L^1(\Omega)^N$.
\end{proposition}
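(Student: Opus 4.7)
The plan is to reduce the two-dimensional $\Gamma-\liminf$ inequality to the one-dimensional result \Cref{thm:GammaLiminfReduced} via slicing in every direction $\xi\in\Disk^1$, and then to recover the full cost $E^{\mu_+,\mu_-}[\sigma]$ by averaging over $\xi$.
Fix a sequence $(\sigma^\varepsilon,\varphi_1^\varepsilon,\ldots,\varphi_N^\varepsilon)\to(\sigma,1,\ldots,1)$ in the ambient topology and, after passage to a subsequence, assume the energies converge to their liminf and that this value is finite.
The latter immediately forces $\sigma^\varepsilon\in X_\varepsilon^{\mu_+,\mu_-}$ and $\|\sigma^\varepsilon\|_\meas$ to be uniformly bounded (thanks to \Cref{thm:equicoercivity}, stated below, whose proof does not rely on the present result), so that $\dive\sigma=\mu_+-\mu_-$ in the distributional limit and $\sigma\in X^{\mu_+,\mu_-}$.

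The first step is to slice. For every $\xi\in\Disk^1$ and almost every $t\in\R$ the hyperplane $H_{\xi,t}$ meets $\Omega$ in a countable union $I(\xi,t)$ of open intervals. Using $|\sigma^\varepsilon|\geq|\sigma^\varepsilon\cdot\xi|$ together with the monotonicity of $\omega_\varepsilon(\alpha_0,\gamma_\varepsilon/\varepsilon,\cdot)$, and $|\nabla\varphi_i^\varepsilon|^2\geq|\partial_{\xi^\perp}\varphi_i^\varepsilon|^2$ with $\xi^\perp$ the tangential direction on the slice, Fubini's theorem yields
\begin{equation*}
E_\varepsilon^{\mu_+,\mu_-}[\sigma^\varepsilon,\varphi_1^\varepsilon,\ldots,\varphi_N^\varepsilon]
\;\geq\;\int_\R\f_\varepsilon[(\sigma^\varepsilon\cdot\xi)|_{H_{\xi,t}},\varphi_1^\varepsilon|_{H_{\xi,t}},\ldots,\varphi_N^\varepsilon|_{H_{\xi,t}};I(\xi,t)]\,\d t\,.
\end{equation*}
By \Cref{thm:weakConvergenceSlices} and \Cref{rem:slicedFunction} (applied after extracting a further subsequence that may depend on $\xi$), for almost every $t$ the sliced data converge to $(\sigma_{\xi,t},1,\ldots,1)$ in the topology required by \Cref{thm:GammaLiminfReduced}. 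Combining Fatou's lemma in $t$ with that corollary produces
\begin{equation*}
\liminf_{\varepsilon\to0}E_\varepsilon^{\mu_+,\mu_-}[\sigma^\varepsilon,\varphi_1^\varepsilon,\ldots,\varphi_N^\varepsilon]
\;\geq\;\int_\R\F[\sigma_{\xi,t};I(\xi,t)]\,\d t
\qquad\text{for a.e.\ }\xi\in\Disk^1.
\end{equation*}

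The second step is to identify this direction-dependent lower bound with the cost $E^{\mu_+,\mu_-}[\sigma]$ after averaging in $\xi$. Decompose $\sigma=m_\sigma\theta_\sigma\hdone\restr S_\sigma+\sigma^\perp$ as in the excerpt. By \Cref{rem:characterizationSlicing}, for almost every $(\xi,t)$ the slice splits as $\sigma_{\xi,t}=\mathrm{sgn}(\xi\cdot\theta_\sigma)m_\sigma\hd^0\restr(S_\sigma)_{\xi,t}+(\sigma^\perp)_{\xi,t}$ with the second summand $\hd^0$-diffuse. Plugging into $\F$ and applying the coarea formula for rectifiable sets together with the disintegration identity $\|\sigma^\perp\cdot\xi\|_\meas=\int_\R|(\sigma^\perp)_{\xi,t}|(H_{\xi,t})\,\d t$ gives
\begin{equation*}
\int_\R\F[\sigma_{\xi,t};I(\xi,t)]\,\d t
\;\geq\;\int_{S_\sigma}\tau(m_\sigma)|\theta_\sigma\cdot\xi|\,\d\hdone+\alpha_0\,|\sigma^\perp\cdot\xi|(\overline\Omega)\,.
\end{equation*}
Integrating over $\xi\in\Disk^1$ and exchanging the order of integration using the elementary identity $\int_{\Disk^1}|v\cdot\xi|\,\d\hdone(\xi)=4|v|$ yields $4E^{\mu_+,\mu_-}[\sigma]$ on the right-hand side, while the left-hand side is bounded by $4\liminf_{\varepsilon\to0}E_\varepsilon^{\mu_+,\mu_-}$ via a further application of Fatou.

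The main technical obstacle is the joint measurability in $(\xi,t)$: the subsequence along which \Cref{thm:GammaLiminfReduced} applies a priori depends on both variables. I would handle this by first extracting one subsequence (independent of $\xi$) for which $\sigma^\varepsilon\weakstarto\sigma$ and $\|\dive\sigma^\varepsilon\|_\meas$ is uniformly bounded, then invoking \Cref{thm:weakConvergenceSlices} together with Fubini applied to the $L^1$ convergence of the $\varphi_i^\varepsilon$ to obtain, for a.e.\ $\xi$, convergence of the sliced data on a.e.\ slice; the liminf-argument above is carried out along this fixed subsequence. A minor remaining point, concerning boundary values of $\varphi_i^\varepsilon$ on the truncated slice intervals, is already absorbed in the reduction from $\tilde I$ to $(a+\xi,b-\xi)$ at the start of the proof of \Cref{thm:GammaLiminfReduced}, and requires no further work here.
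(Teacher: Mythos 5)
Your first step---slicing via Fubini, passing the $\liminf$ inside with Fatou, invoking \Cref{thm:weakConvergenceSlices} for the convergence of the sliced data, and applying \Cref{thm:GammaLiminfReduced} on almost every slice---is exactly the route the paper takes, and the measurability/subsequence issues you flag are handled as you describe. The gap is in your second step. Averaging the directional lower bounds over $\xi\in\Disk^1$ cannot recover the full cost: for each fixed $\xi$ you have
\begin{equation*}
\liminf_{\varepsilon\to0}E_\varepsilon^{\mu_+,\mu_-}[\sigma^\varepsilon,\varphi_1^\varepsilon,\ldots,\varphi_N^\varepsilon]
\;\geq\;\int_{S_\sigma}\tau(m_\sigma)\,|\theta_\sigma\cdot\xi|\,\d\hdone+\alpha_0|\sigma^\perp\cdot\xi|(\overline\Omega)\,,
\end{equation*}
so integrating over $\Disk^1$ bounds the left-hand side by $2\pi$ times the $\liminf$ (not by $4$ times, as you claim), while the right-hand side becomes $4\,E^{\mu_+,\mu_-}[\sigma]$ by the identity $\int_{\Disk^1}|v\cdot\xi|\,\d\hdone(\xi)=4|v|$. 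The net result is only $\liminf_{\varepsilon\to0}E_\varepsilon^{\mu_+,\mu_-}\geq\tfrac2\pi E^{\mu_+,\mu_-}[\sigma]$, and $\tfrac2\pi<1$. Taking a supremum over $\xi$ instead of an average does not help either, because $\theta_\sigma(x)$ varies with $x$, so no single direction realizes $|\theta_\sigma\cdot\xi|=1$ everywhere.

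The missing ingredient is the standard localization argument. One must keep the estimate local: define the superadditive set function $\kappa(A)=\liminf_{\varepsilon\to0}\E_\varepsilon^{\mu_+,\mu_-}[\sigma^\varepsilon,\varphi_1^\varepsilon,\ldots,\varphi_N^\varepsilon;A]$ on open sets $A$, the measure $\lambda=\alpha_0|\sigma^\perp|+\tau(m_\sigma)\hdone\restr S_\sigma$, and the densities $\psi_j=|\tfrac{\sigma}{|\sigma|}\cdot\xi^j|$ for a countable dense family $\{\xi^j\}\subset\Disk^1$. The slicing argument you already have gives $\kappa(A)\geq\int_A\psi_j\,\d\lambda$ for every $j$ and every open $A$, and then the supremum-of-measures lemma \cite[Prop.\,1.16]{Br98} upgrades this to $\kappa(A)\geq\int_A\sup_j\psi_j\,\d\lambda=\lambda(A)$, since $\sup_j\psi_j=1$ $\lambda$-almost everywhere. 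Applied to a neighbourhood of $\Omega$ this yields the sharp inequality. Without this step your argument proves a strictly weaker statement.
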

\begin{proof}
Let $(\sigma^\varepsilon,\varphi_1^\varepsilon,\ldots,\varphi_N^\varepsilon)$ converge to $(\sigma,\varphi_1,\ldots,\varphi_N)$ in the considered topology.
We first extend $\sigma^\varepsilon$ and $\sigma$ to $\R^2\setminus\overline\Omega$ by zero and $\varphi_i^\varepsilon$ and $\varphi_i$ to $\R^2\setminus\Omega$ by $1$ for $i=1,\ldots,N$.
The phase field cost functional and the cost functional are extended to $\R^2$ in the obvious way (their values do not change).
Without loss of generality (potentially after extracting a subsequence)
we may assume $\lim_{\varepsilon\to0}E_\varepsilon^{\mu_+,\mu_-}[\sigma^\varepsilon,\varphi_1^\varepsilon,\ldots,\varphi_N^\varepsilon]$ to exist and to be finite (else there is nothing to show).
As a consequence we have $\dive\sigma^\varepsilon=\mu_+^\varepsilon-\mu_-^\varepsilon$ as well as $\dive\sigma=\mu_+-\mu_-$
and $\varphi_1\equiv\ldots\equiv\varphi_N\equiv1$ (since the phase field cost functional is bounded below by $\sum_{i=1}^N\frac{\beta_i}{2\varepsilon}\|\varphi_i^\varepsilon-1\|_{L^2}^2$).

Now let $A\subset\R^2$ open and bounded; the restriction of the phase field cost functional to a domain $A$ will be denoted $\E_\varepsilon^{\mu_+,\mu_-}[\cdot;A]$.
Choosing some $\xi\in S^1$, by Fubini's decomposition theorem we have
\begin{align*}
&\E_\varepsilon^{\mu_+,\mu_-}[\sigma^\varepsilon,\varphi_1^\varepsilon,\ldots,\varphi_N^\varepsilon;A]\\
&=\int_{-\infty}^\infty\int_{A_{\xi,t}}\bigg(\omega_\varepsilon\left(\alpha_0,\frac{\gamma_\varepsilon(x)}{\varepsilon},|\sigma^\varepsilon(x)|\right)
+\sum_{i=1}^N\frac{\beta_i}2\left[\varepsilon|\nabla\varphi_i|^2+\frac{(\varphi_i-1)^2}\varepsilon\right]\bigg)_{\xi,t}\,\d x\,\d t\\
&\geq\int_{-\infty}^\infty\int_{A_{\xi,t}}\omega_\varepsilon\left(\alpha_0,\frac{(\gamma_\varepsilon)_{\xi,t}}{\varepsilon},|\sigma^\varepsilon_{\xi,t}|\right)
+\sum_{i=1}^N\frac{\beta_i}2\left[\varepsilon|(\varphi_i^\varepsilon)_{\xi,t}'|^2+\frac{((\varphi_i^\varepsilon)_{\xi,t}-1)^2}\varepsilon\right]\,\d x\,\d t\\
&=\int_{-\infty}^\infty\F_\varepsilon[\sigma^\varepsilon_{\xi,t},(\varphi_1^\varepsilon)_{\xi,t},\ldots,(\varphi_N^\varepsilon)_{\xi,t};A_{\xi,t}]\,\d t\,,
\end{align*}
where $\F_\varepsilon$ is the dimension-reduced phase field energy from \Cref{def:reducedFunctionals} and for simplicity we identified the domain $A_{\xi,t}$ of the sliced functions with an open subset of the real line.
Fatou's lemma thus implies
\begin{equation*}
\liminf_{\varepsilon\to0}\E_\varepsilon^{\mu_+,\mu_-}[\sigma^\varepsilon,\varphi_1^\varepsilon,\ldots,\varphi_N^\varepsilon;A]
\geq\int_{-\infty}^\infty\liminf_{\varepsilon\to0}\F_\varepsilon[\sigma^\varepsilon_{\xi,t},(\varphi_1^\varepsilon)_{\xi,t},\ldots,(\varphi_N^\varepsilon)_{\xi,t};A_{\xi,t}]\,\d t\,.
\end{equation*}
By assumption, the left-hand side is finite so that the right-hand side integrand is finite for almost all $t\in\R$ as well.
Pick any such $t$ and pass to a subsequence such that $\liminf$ turns into $\lim$.
Indeed $\sigma^\varepsilon_{\xi,t}\weakstarto\sigma_{\xi,t}$  for every $\xi$ and almost all $t$, as $\sigma^\varepsilon\weakstarto\sigma$ and~\Cref{thm:weakConvergenceSlices}. Thus, \Cref{thm:GammaLiminfReduced} on the reduced dimension problem implies
\begin{equation*}
\liminf_{\varepsilon\to0}\F_\varepsilon[\sigma^\varepsilon_{\xi,t},(\varphi_1^\varepsilon)_{\xi,t},\ldots,(\varphi_N^\varepsilon)_{\xi,t};A_{\xi,t}]
\geq\F[\sigma_{\xi,t},(\varphi_1)_{\xi,t},\ldots,(\varphi_N)_{\xi,t};A_{\xi,t}]
\end{equation*}
for almost all $t\in\R$ so that
\begin{equation*}
\liminf_{\varepsilon\to0}\E_\varepsilon^{\mu_+,\mu_-}[\sigma^\varepsilon,\varphi_1^\varepsilon,\ldots,\varphi_N^\varepsilon;A]
\geq\int_{-\infty}^\infty\F[\sigma_{\xi,t},(\varphi_1)_{\xi,t},\ldots,(\varphi_N)_{\xi,t};A_{\xi,t}]\,\d t\,.
\end{equation*}

For notational convenience let us now define the auxiliary function $\kappa$, defined for open subsets $A\subset\R^2$, as
\begin{equation*}
\kappa(A)=\liminf_{\varepsilon\to0}\E_\varepsilon^{\mu_+,\mu_-}[\sigma^\varepsilon,\varphi_1^\varepsilon,\ldots,\varphi_N^\varepsilon;A]\,.
\end{equation*}
Furthermore, introduce the nonnegative Borel measure
\begin{equation*}
\lambda(A)=\alpha_0|\sigma^\perp|(A)+\int_{S_\sigma\cap A}\tau(m_\sigma)\,\d\hdone
\end{equation*}
as well as the $|\sigma|$-measureable Borel functions
\begin{equation*}
\psi_j:\R^2\to\R,\qquad
\psi_j=\left|\frac{\sigma}{|\sigma|}\cdot\xi^j\right|
\end{equation*}
for some sequence $\xi^j$, $j\in\N$, dense in $S^1$.

Since $\sigma$ is a divergence measure vector field, we have
\begin{align*}
\kappa(A)
&\geq\int_{-\infty}^\infty\F[\sigma_{\xi^j,t},(\varphi_1)_{\xi^j,t},\ldots,(\varphi_N)_{\xi^j,t};A_{\xi^j,t}]\,\d t\\
&=\int_{-\infty}^\infty\alpha_0|(\sigma_{\xi^j,t})^\perp|(A_{\xi^j,t})+\int_{S_{\sigma_{\xi^j,t}}\cap A_{\xi^j,t}}\tau(|m_{\sigma_{\xi^j,t}}|)\,\d\hd^0\,\d t\\
&=\alpha_0|\sigma^\perp\cdot\xi^j|(A)+\int_{S_\sigma\cap A}\tau(|m_\sigma|)|\theta_\sigma\cdot\xi^j|\,\d\hdone
\geq\int_A\psi_j\,\d\lambda
\end{align*}
for all $j\in\N$ where we used~\Cref{rem:characterizationSlicing} in the last equality. By \cite[Prop.\,1.16]{Br98} the above inequality implies
\begin{equation*}
\kappa(A)\geq\int_A\sup_j\psi_j\,\d\lambda
\end{equation*}
for any open $A\subset\R^2$.
In particular, choosing $A$ as the $1$-neighbourhood of $\Omega$ we obtain
\begin{multline*}
\liminf_{\varepsilon\to0}\E_\varepsilon^{\mu_+,\mu_-}[\sigma^\varepsilon,\varphi_1^\varepsilon,\ldots,\varphi_N^\varepsilon;\Omega]
=\kappa(A)
\geq\int_A\sup_j\psi_j\,\d\lambda\\
=\alpha_0|\sigma^\perp|(A)+\int_{S_\sigma\cap A}\tau(m_\sigma)\,\d\hdone
=\E^{\mu_+,\mu_-}[\sigma]\,,
\end{multline*}
the desired result.
\end{proof}


\subsection{Equicoercivity}
\begin{proof}[Proof of \Cref{thm:equicoercivity}]
Due to $C>E_\varepsilon^{\mu_+,\mu_-}[\sigma^\varepsilon,\varphi_1^\varepsilon,\ldots,\varphi_N^\varepsilon]\geq\frac{\beta_i}{2\varepsilon}\|\varphi_i^\varepsilon-1\|_{L^2}^2$ for all $i=1,\ldots,N$,
we have $\varphi_i^\varepsilon\to1$ in $L^2(\Omega)$ and thus also in $L^1(\Omega)$.
Furthermore, we will show further below that $\|\sigma^\varepsilon\|_{L^1}=\|\sigma^\varepsilon\|_\meas$ is uniformly bounded,
which by the Banach--Alaoglu theorem implies existence of a weakly-* converging subsequence (still denoted $\sigma^\varepsilon$) with limit $\sigma\in\meas(\overline\Omega;\R^2)$.
It is now a standard property of $\Gamma$-convergence that, due to the above equicoercivity,
any sequence of minimizers of $E_\varepsilon^{\mu_+,\mu_-}$ contains a subsequence converging to a minimizer of $E^{\mu_+,\mu_-}$.

To finish the proof we show uniform boundedness of $\sigma^\varepsilon$ in $\meas(\overline\Omega;\R^2)$.
Indeed, using $\omega_\varepsilon(\alpha_0,\frac{\gamma_\varepsilon(x)}\varepsilon,|\sigma(x)|)\geq\frac{\alpha_0}2|\sigma(x)|$ for $x\in K_0^\varepsilon$
(remember that $K_0^\varepsilon=\emptyset$ for $\alpha_0=\infty$) we obtain
\begin{equation*}
\|\sigma^\varepsilon\|_\meas
=\sum_{i=0}^N\int_{K_i^\varepsilon}|\sigma^\varepsilon|\,\d x
\leq\frac{2C}{\alpha_0}+\sum_{i=1}^N\int_{K_i^\varepsilon}|\sigma^\varepsilon|\,\d x
\end{equation*}
(the first term is interpreted as zero for $\alpha_0=\infty$).
Furthermore, by H\"older's inequality we have
\begin{multline*}
\left(\int_{K_i^\varepsilon}|\sigma^\varepsilon|\,\d x\right)^2
\leq\left(\int_{K_i^\varepsilon}\frac{(\varphi_i^\varepsilon)^2+\alpha_i^2\varepsilon^2/\beta_i}{2\varepsilon}|\sigma^\varepsilon|^2\,\d x\right)
\left(\int_{K_i^\varepsilon}\frac{2\varepsilon}{(\varphi_i^\varepsilon)^2+\alpha_i^2\varepsilon^2/\beta_i}\,\d x\right)\\
\leq C\left(\int_{K_i^\varepsilon}\frac{2\varepsilon}{(\varphi_i^\varepsilon)^2+\alpha_i^2\varepsilon^2/\beta_i}\,\d x\right)\,.
\end{multline*}
Choosing now some arbitrary $\lambda\in(0,1)$ we can estimate
\begin{multline*}
\int_{K_i^\varepsilon}\frac{2\varepsilon}{(\varphi_i^\varepsilon)^2+\alpha_i^2\varepsilon^2/\beta_i}\,\d x
=\int_{K_i^\varepsilon\cap\{\varphi_i^\varepsilon<\lambda\}}\frac{2\varepsilon}{(\varphi_i^\varepsilon)^2+\alpha_i^2\varepsilon^2/\beta_i}\,\d x
+\int_{K_i^\varepsilon\cap\{\varphi_i^\varepsilon\geq\lambda\}}\frac{2\varepsilon}{(\varphi_i^\varepsilon)^2+\alpha_i^2\varepsilon^2/\beta_i}\,\d x\\
\leq\frac4{\alpha_i^2(1-\lambda)^2}\int_{K_i^\varepsilon\cap\{\varphi_i^\varepsilon<\lambda\}}\frac{\beta_i(1-\varphi_i^\varepsilon)^2}{2\varepsilon}\,\d x
+\frac{2\varepsilon}{\lambda^2}\hd^2(\Omega)
\leq\frac{4C}{\alpha_i^2(1-\lambda)^2}+\frac{2\varepsilon}{\lambda^2}\hd^2(\Omega)\,.
\end{multline*}
Summarizing, $\|\sigma^\varepsilon\|_\meas\leq\frac C{2\alpha_0}+\sum_{i=1}^N\sqrt{\frac{4C^2}{\alpha_i^2(1-\lambda)^2}+\frac{2\varepsilon C}{\lambda^2}\hd^2(\Omega)}$.
\end{proof}

\subsection{The $\Gamma-\limsup$ inequality}
\begin{proposition}[$\Gamma-\limsup$ of phase field functional]\label{thm:GammaLimsup}
Let $\mu_+,\mu_-\in\prob(\overline\Omega)$ be an admissible source and sink.
We have
\begin{equation*}
\Gamma-\limsup_{\varepsilon\to0} E_\varepsilon^{\mu_+,\mu_-}= E^{\mu_+,\mu_-}
\end{equation*}
with respect to weak-$*$ convergence in $\meas(\overline\Omega;\R^2)$ and strong convergence in $L^1(\Omega)^N$.
\end{proposition}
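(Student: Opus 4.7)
The plan is to produce a recovery sequence $(\sigma^\varepsilon,\varphi_1^\varepsilon,\ldots,\varphi_N^\varepsilon)\in X_\varepsilon^{\mu_+,\mu_-}$ converging to $(\flux,1,\ldots,1)$ in the weak-$*$ times $L^1$ topology with $\limsup_{\varepsilon\to0}E_\varepsilon^{\mu_+,\mu_-}[\sigma^\varepsilon,\varphi_1^\varepsilon,\ldots,\varphi_N^\varepsilon]\leq E^{\mu_+,\mu_-}[\flux]$ for every $\flux\in X^{\mu_+,\mu_-}$ with $\E^{\mu_+,\mu_-}[\flux]<\infty$. Since the $\Gamma$-limsup is itself lower semi-continuous along the weak-$*\times L^1$ topology, it suffices to carry out the construction on a subclass that is dense in the cost sense.

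First I would reduce to polyhedral fluxes. By the standard density results for branched transport (as in the references cited in \Cref{rem:flatChains} and the works on urban planning/branched transport cited in the introduction), any admissible $\flux$ with finite cost can be approximated by polyhedral fluxes $\flux_k=\sum_j m_j^k\theta_j^k\hdone\restr e_j^k$ between Dirac source and sink $\mu_\pm^k$, so that $\flux_k\weakstarto\flux$, $\mu_\pm^k\weakstarto\mu_\pm$, and $\E^{\mu_+^k,\mu_-^k}[\flux_k]\to\E^{\mu_+,\mu_-}[\flux]$. In the case $\alpha_0<\infty$ the diffuse part $\flux^\perp$ is covered by this approximation because $\tau(m)\leq\alpha_0 m$, so $|\flux^\perp|$ can be replaced by a fine cloud of short edges of type $i=0$; in the case $\alpha_0=\infty$ the admissibility assumption forces $\flux^\perp=0$ and $\mu_\pm$ to be atomic already.

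Second, for a polyhedral $\flux$ I would construct the recovery sequence edge by edge. For each edge $e_j$ select an optimal index $i_j\in\{0,1,\ldots,N\}$ with $\tau(m_j)=\alpha_{i_j}m_j+\beta_{i_j}$. If $i_j\geq1$, place a rescaled one-sided Ambrosio--Tortorelli profile $\varphi_{i_j}^\varepsilon(x)=\psi^*(\dist(x,e_j)/\varepsilon)$ with $\psi^*(0)=0$ and $\psi^*(\infty)=1$, taken as the pointwise minimum over all edges so that $\varphi_{i}^\varepsilon\to1$ in $L^1$; the rescaling of $\psi^*$ is chosen so that $\beta_{i_j}\L_\varepsilon[\varphi_{i_j}^\varepsilon]$ asymptotically yields $\beta_{i_j}\hdone(e_j)$ per edge. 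Concentrate $\sigma^\varepsilon$ in a tube of width $O(\varepsilon)$ around $e_j$ via a transverse bump carrying mass $m_j$ per unit length in direction $\theta_j$, chosen so that the active slot in $\gamma_\varepsilon$ is exactly $\varphi_{i_j}^{\varepsilon 2}+\alpha_{i_j}^2\varepsilon^2/\beta_{i_j}$; the transverse integral of $\omega_\varepsilon$ then contributes $\alpha_{i_j}m_j\hdone(e_j)$ by the standard Young-type cross-term computation in Ambrosio--Tortorelli approximations, giving the total edge cost $\tau(m_j)\hdone(e_j)$. For $i_j=0$ (only for $\alpha_0<\infty$) instead leave every $\varphi_i^\varepsilon\equiv1$ and spread $\sigma^\varepsilon$ in a tube of width $\varepsilon$ with $|\sigma^\varepsilon|$ in the linear regime of $\omega_\varepsilon$; the regularisation term $\varepsilon^p|\sigma^\varepsilon|^2$ contributes $o(1)$, and the transverse integral gives $\alpha_0 m_j\hdone(e_j)$ asymptotically.

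Third, adjust $\sigma^\varepsilon$ inside balls of radius $O(\varepsilon)$ around each vertex (endpoints of edges, interior junctions, support points of $\mu_\pm^\varepsilon$) by an explicit radial cone-type vector field that enforces the required condition $\dive\sigma^\varepsilon=\mu_+^\varepsilon-\mu_-^\varepsilon$ after mollification with $\rho_\varepsilon$; since such a ball has area $O(\varepsilon^2)$ while $|\sigma^\varepsilon|=O(1/\varepsilon)$, its contribution to $\omega_\varepsilon$ is $O(\varepsilon)$ per vertex and vanishes in the limit. A diagonal argument combining the polyhedral approximation index $k$ and the phase field parameter $\varepsilon$ then yields the desired $\Gamma$-limsup bound. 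The main obstacle is precisely this vertex and endpoint construction: one must interpolate between the tube fields coming from incident edges and a divergence-carrying radial field on each ball of radius $O(\varepsilon)$, verifying that the cross-regime of $\omega_\varepsilon$ (where $|\sigma^\varepsilon|$ crosses the threshold $\alpha_0\varepsilon/\gamma_\varepsilon$) contributes negligibly; a secondary delicate point is the density step for $\alpha_0<\infty$, where the diffuse part $\flux^\perp$ must be approximated by polyhedral fluxes while keeping total mass and $\alpha_0$-weighted cost under tight control.
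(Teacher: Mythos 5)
Your overall strategy coincides with the paper's: approximate $\flux$ by polyhedral fluxes whose cost converges (the paper invokes \cite[Def.\,2.2 \& Prop.\,2.32]{BrWi17} for exactly the density statement you sketch), then build the recovery sequence edge by edge with a tube of width $O(\varepsilon)$ for $\sigma^\varepsilon$ and a widened Ambrosio--Tortorelli profile for the active phase field $\varphi_{\bar\iota}^\varepsilon$ (the paper's optimal half-width is $a^\varepsilon_{\bar\iota}=\alpha_{\bar\iota}m\varepsilon/(2\beta_{\bar\iota})$, which splits the $\alpha_{\bar\iota}m\hdone(e)$ contribution evenly between the $\omega_\varepsilon$ term and the $(\varphi-1)^2/\varepsilon$ term on the tube), then repair the divergence near vertices at cost $O(\varepsilon)$ (the paper uses rescaled Neumann problems on balls $B_{r\varepsilon}(s^\pm)$ rather than explicit cones, but this is immaterial), and finally extract a diagonal sequence. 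Up to these cosmetic differences your proposal reproduces Steps 1, 2 and 4 of the paper's proof.

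There is, however, one genuine omission, corresponding to the paper's Step 3. For $\alpha_0<\infty$ the admissible $\mu_\pm$ need not be atomic, so the polyhedral approximants $\flux_k$ have marginals $\mu_\pm^k\neq\mu_\pm$; your local corrections on $O(\varepsilon)$-balls around vertices can only produce a field with $\dive\sigma^\varepsilon=\rho_\varepsilon*(\mu_+^k-\mu_-^k)$, which is \emph{not} admissible for $E_\varepsilon^{\mu_+,\mu_-}$ (membership in $X_\varepsilon^{\mu_+,\mu_-}$ requires $\dive\sigma^\varepsilon=\rho_\varepsilon*(\mu_+-\mu_-)$ exactly, and the functional is $+\infty$ otherwise), and no diagonal choice of $k(\varepsilon)$ repairs this. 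The fix is a global, not local, correction: add $\rho_\varepsilon*(\lambda_+^k-\lambda_-^k)$ to $\sigma^\varepsilon$, where $\lambda_\pm^k$ is the optimal Wasserstein-$1$ flux between $\mu_\pm^k$ and $\mu_\pm$. Its additional cost is controlled by $\alpha_0\|\lambda_+^k-\lambda_-^k\|_\meas\leq\alpha_0\bigl(W_1(\mu_+^k,\mu_+)+W_1(\mu_-^k,\mu_-)\bigr)\to0$ as $k\to\infty$ (plus an $\varepsilon^{p-1}$-order term from the regularizer, using $\|\rho_\varepsilon*\lambda_\pm^k\|_{L^\infty}=O(1/\varepsilon)$), and this is where the sublinearity $\tau(m)\leq\alpha_0m$ is used a second time, beyond the polyhedral density step you mention. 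Without this term your construction only establishes the $\limsup$ inequality when $\mu_\pm$ are finite combinations of Dirac masses.
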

\begin{proof}
Consider a mass flux $\sigma$ between the measures $\mu_+$ and $\mu_-$.
We will construct a recovery sequence $(\sigma^\varepsilon, \varphi^\varepsilon_1,\ldots, \varphi^\varepsilon_N )$ such that $\sigma^\varepsilon \weakstarto\sigma$ and $\varphi_1^\varepsilon\to 1,\ldots,\varphi_N^\varepsilon\to 1$ in the desired topology as $\varepsilon\to0$
as well as $\limsup_{\varepsilon\to0} E_\varepsilon^{\mu_+,\mu_-}[\sigma^\varepsilon, \varphi^\varepsilon_1,\ldots, \varphi^\varepsilon_N]\leq  E^{\mu_+,\mu_-}[\sigma,1,\ldots,1]$.
Without loss of generality we may restrict our attention to fluxes for which
\begin{equation*}
\E^{\mu_+,\mu_-}[\sigma]\leq C<\infty
\end{equation*}
since otherwise there is nothing to prove.
By \cite[Def.\,2.2 \& Prop.\,2.32]{BrWi17} there exists a sequence
\begin{equation*}\label{def:sigmaj}
\sigma_j=\sum_{k=1}^{M_j}m_{k,j}\theta_{k,j}\hdone\restr\Sigma_{k,j}
\end{equation*}
of polyhedral divergence measure vector fields in $\Omega$
such that
\begin{equation*}
\begin{aligned}
&\sigma_j\weakstarto\sigma,&\qquad\qquad& \dive\sigma_j=\mu_+^j-\mu_-^j,\\
&\mu_\pm^j\weakstarto\mu_\pm,&\qquad \qquad&\E^{\mu_+^j,\mu_-^j}[\sigma_j]\to\E^{\mu_+,\mu_-}[\sigma].
\end{aligned}
\end{equation*}
If $\mu_+$ and $\mu_-$ are finite linear combinations of Dirac masses (which we have assumed in the case $\alpha_0=\infty$), we may even choose $\mu_\pm^j=\mu_\pm$.
We will construct the recovery sequence based on those polyhedral divergence measure vector fields.
In the following we will use the notation
\begin{equation*}
\E_\varepsilon[\sigma,\varphi_1,\ldots,\varphi_N]
=\int_\Omega\omega_\varepsilon\left(\alpha_0,\frac{\gamma_\varepsilon(x)}{\varepsilon},|\sigma(x)|\right)\,\d x
+\sum_{i=1}^N\beta_i\L_\varepsilon[\varphi_i]
\end{equation*}
for the phase field cost functional without divergence constraints.

\medskip\noindent\textit{Step 1. Initial construction for a single polyhedral segment}\\
In this step we approximate a single line element $m_{k,j}\theta_{k,j}\hdone\restr\Sigma_{k,j}$ of $\sigma_j$ by a phase field version.
To this end we fix $j$ and $0\leq k\leq M_j$ and drop these indices from now on in the notation for the sake of legibility.
Without loss of generality we may assume $\Sigma=[0,L]\times \{0\}$, $m>0$, and $\theta=e_1$ the standard Euclidean basis vector. Set 
\begin{equation*}
\bar\iota=\alphargmin\{\alpha_i\,m+\beta_i\,|\,i=0,\ldots,N\}
\end{equation*}
to identify the phase field that will be \textit{active} on $\Sigma$ ($\bar\iota=0$ means that no phase field is active).
We first specify (a preliminary version of) the vector field $\sigma^\varepsilon$.
To this end let $d_\Sigma$ denote the distance function associated with $\Sigma$ and define the width
\begin{equation}\label{eqn:supportWidth}
a^\varepsilon_{\bar\iota}= \frac{\alpha_{\bar\iota}m\varepsilon}{2\beta_{\bar\iota}} \quad\text{ if }\bar\iota>0\quad\text{ and } a^\varepsilon_{\bar\iota}= \alpha_{0}m\varepsilon \quad\mbox{otherwise}
\end{equation}
over which the vector field will be diffused.
We now set
\begin{equation*}
\overline \sigma^\varepsilon=\frac{m}{2a^\varepsilon_{\bar\iota}}\;\chi_{\{d_\Sigma\leq a_{\bar\iota}^\varepsilon\}}\;e_1\,,
\end{equation*}
where $\chi_A$ shall denote the characteristic function of a set $A$.
Note that this vector field encodes a total mass flux of $m$ that is evenly spread over the $a_{\bar\iota}^\varepsilon$-enlargement of $\Sigma$.
The corresponding active phase field will be zero in that region.
Indeed, consider the auxiliary Cauchy problem
\begin{equation*}
\phi'=\frac{1}{\varepsilon}(1-\phi),\qquad
\phi(0)=0,
\end{equation*}
whose solution $\phi_\varepsilon(t)=1-\exp\left(-\frac{t}{\varepsilon}\right)$ represents the well-known optimal Ambrosio--Tortorelli phase field profile.
Then we set $\overline\varphi_i^\varepsilon=0$ for all $i\neq \bar\iota$ and,
if $\bar\iota\neq 0$,
\begin{equation*}
\overline\varphi^\varepsilon_{\bar\iota}(x)
=\phi_\varepsilon(\max\{0,d_\Sigma(x)-a^\varepsilon_{\bar\iota}\})=
\begin{cases}
0,&\mbox{ if } d_{\Sigma}(x)<a^\varepsilon_{\bar\iota},\\
1-\exp\left(\frac{a^\varepsilon_{\bar\iota}-d_{\Sigma}(x)}{\varepsilon}\right),&\mbox{ if } d_{\Sigma}(x)\geq a^\varepsilon_{\bar\iota}\,.
\end{cases}
\end{equation*}

\begin{figure}
\centering
\includegraphics[width = 0.45\textwidth,trim=65 15 45 25,clip]{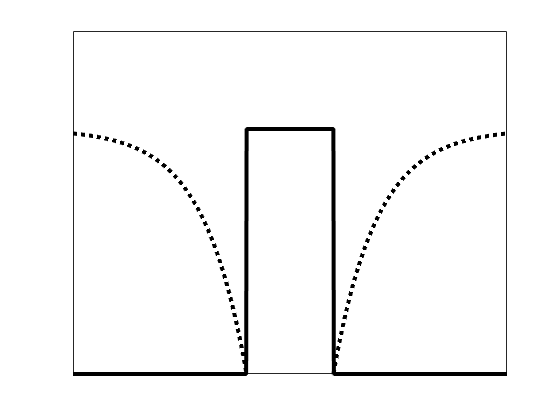}
\begin{picture}(0,0)(1,0)
\put(-155,10.3){\vector(1,0){157}} \put(-150,0){\vector(0,1){120}} 
\put(-95,5){\vector(1,0){33}} \put(-66,5){\vector(-1,0){30}} \put(-87,-5){\small$\frac{\alpha_{\bar\iota}m\varepsilon}{\beta_{\bar\iota}}$} 
\put(-90,35){\vector(0,1){60}} \put(-90,95){\vector(0,-1){80}} \put(-90,68){\small$\frac{\beta_{\bar\iota}}{\alpha_{\bar\iota}\varepsilon}$}  
\put(-90,105){\small$\varepsilon|\overline\sigma^\varepsilon|$} 
\put(-35,74){\small$|\overline\varphi_{\bar\iota}^\varepsilon|$} 
\put(-158,97){\small$1$} 
\end{picture} 
\hspace{0.5cm}
\includegraphics[width = 0.45\textwidth,trim=65 15 45 25,clip]{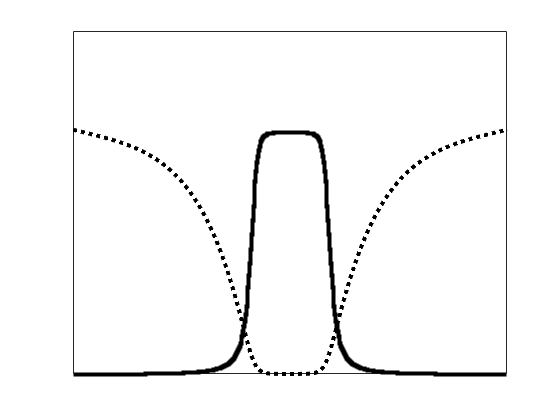} 
\begin{picture}(0,0)(1,0)
\put(-155,10.3){\vector(1,0){157}} \put(-150,0){\vector(0,1){120}} 
\put(-90,105){\small$\varepsilon|\overline\sigma^\varepsilon|$} 
\put(-35,74){\small$|\varphi^\varepsilon|$} 
\put(-158,97){\small$1$} 
\end{picture} 
\caption{Left: Sketch of the optimal profile of $|\overline\sigma^\varepsilon|$ and a phase field $\overline\varphi_{\bar\iota}^\varepsilon$ for some $\bar\iota>0$ with $m=2$, $\varepsilon=0.1$, $\alpha_{\bar\iota}=1$, $\beta_{\bar\iota}=1$. Right: Sketch of the numerical solution to the 1D problem with the same parameters.}
\end{figure}


Let us now evaluate the corresponding phase field cost. In the case $\bar\iota=0$ (which can only occur for $\alpha_0<\infty$) we obtain
\begin{multline*}
\E_\varepsilon[\overline\sigma^\varepsilon(x),\overline\varphi_1^\varepsilon,\ldots,\overline\varphi_N^\varepsilon]
=\int_{\Omega}\omega_\varepsilon\left(\alpha_0,\frac{\overline\gamma_\varepsilon}{\varepsilon},|\overline\sigma^\varepsilon|\right)\,\d x
\leq\int_\Omega\alpha_0|\overline\sigma^\varepsilon|+\varepsilon^p|\overline\sigma^\varepsilon|^2\,\d x\\
=\int_{\{d_\Sigma\leq a_0^\varepsilon\}}\frac{1+\tfrac{\varepsilon^{p-1}}{2\alpha_0^2}}{2\varepsilon}\,\d x
=(\alpha_0\,m\,L+\pi\,\alpha_0^2\,m^2\,\varepsilon)(1+\tfrac{\varepsilon^{p-1}}{2\alpha_0^2})
=\alpha_0\,m\,L+C(m,L)\varepsilon^{q}\,,
\end{multline*}
where we abbreviated $q=\min\{1,p-1\}>0$ and $C(m,L)>0$ denotes a constant depending on $m$ and $L$.
In the case $\bar\iota\neq 0$ we have $|\overline\sigma^\varepsilon|=\beta_{\bar\iota}/(\alpha_{\bar\iota}\varepsilon)$ as well as $\overline\gamma_\varepsilon=\alpha_{\bar\iota}^2\varepsilon^2/\beta_{\bar\iota}$ on the support of $|\overline\sigma^\varepsilon|$ so that
\begin{align*}
\int_{\Omega}\omega_\varepsilon\left(\alpha_0,\frac{\overline\gamma_\varepsilon}{\varepsilon},|\overline\sigma^\varepsilon|\right)\,\d x
&=\left(\frac{\alpha_{\bar\iota}^2\varepsilon^2}{\beta_{\bar\iota}}\left|\frac{\beta_{\bar\iota}}{\alpha_{\bar\iota}\varepsilon}\right|^2\frac{1}{2\varepsilon}+\varepsilon^{p-2}\left|\frac{\beta_{\bar\iota}}{\alpha_{\bar\iota}}\right|^2\right) (2\,a_{\bar\iota}^\varepsilon\,L+\pi \,(a_{\bar\iota}^\varepsilon)^2)\\
&=\frac{\alpha_{\bar\iota}}{2}\,mL+C(m,L)\varepsilon^q\,.
\end{align*}
Furthermore we have $\L_{\varepsilon}(\overline\varphi^\varepsilon_i)=0$ for $i\neq \bar\iota$ and, employing the coarea formula,
\begin{equation*}
\begin{aligned}
\beta_{\bar\iota}\L_{\varepsilon}(\overline\varphi^\varepsilon_{\bar\iota})&=\frac{\beta_{\bar\iota}}{2}\int_{\Omega}\left[\varepsilon|\nabla \overline\varphi^\varepsilon_{\bar\iota}|^2+\frac{(1-\overline\varphi_{\bar\iota}^\varepsilon)^2}{\varepsilon}\right]\d x\\
&=\frac{\beta_{\bar\iota}}{2\varepsilon}\left(2La^\varepsilon_{\bar\iota}+\pi(a^\varepsilon_{\bar\iota})^2\right)+\frac{\beta_{\bar\iota}}2\int_{a^\varepsilon_{\bar\iota}}^\infty\int_{\{d_\Sigma=t\}}\left[\varepsilon|\phi_\varepsilon'(t-a_{\bar\iota}^\varepsilon)|^2+\frac{(\phi_\varepsilon(t-a_{\bar\iota}^\varepsilon)-1)^2}\varepsilon\right]\,\d\hdone(x)\,\d t\\
  &=\frac{\beta_{\bar\iota}}{2\varepsilon}\left(2La^\varepsilon_{\bar\iota}+\pi(a^\varepsilon_{\bar\iota})^2\right)+\beta_{\bar\iota}\int_{a^\varepsilon_{\bar\iota}}^\infty \frac{1}{\varepsilon}\exp\left(\frac{2(a^\varepsilon_{\bar\iota}-t)}{\varepsilon}\right)(2L+2\pi t)\,\d t\\
  &=\left(\frac{\alpha_{\bar\iota}}{2}m+\beta_{\bar\iota}\right)L+C(m,L)\varepsilon\,.
\end{aligned}
\end{equation*}
Summarizing,
\begin{equation*}\label{firstestimate}
\E_\varepsilon[\overline\sigma^\varepsilon,\overline\varphi^\varepsilon_1,\ldots, \overline\varphi^\varepsilon_N]\leq\tau(m)\,L+C(m,L)\varepsilon^q\,.
\end{equation*}

\medskip\noindent\textit{Step 2. Adapting sources and sinks of all polyhedral segments}\\
The vector field constructions $\overline\sigma^\varepsilon_{k,j}$ from the previous step for each polyhedral segment $\Sigma_{k,j}$ are not compatible with the divergence constraint associated with the measure $\sigma^j$, that is,
\begin{equation*}
\dive\left(\sum_{k=1}^{M_j}\overline\sigma_{k,j}^\varepsilon\right )\neq\rho_\varepsilon*\left(\mu_+^j-\mu_-^j\right)\,.
\end{equation*}
We remedy this by adapting the source and sink of each $\overline\sigma^\varepsilon_{k,j}$.
Set
\begin{equation*}
r(j)= \max_{k=1,\ldots,M_j}|m_{k,j}|\cdot\begin{cases}
\frac{\alpha_1}{\beta_1}&\text{if }\alpha_0=\infty,\\
\max\left\{\frac{\alpha_1}{\beta_1},\alpha_0\right\}&\text{else,}
\end{cases}
\end{equation*}
then all vector fields $\overline\sigma^\varepsilon_{k,j}$ have support in a band around $\Sigma_{k,j}$ of width no larger than $r(j)\varepsilon$.
Without loss of generality we assume $r(j)\geq1$ (else we just increase it).
Again we concentrate on a single segment with fixed $j$ and $k$ and drop these indices in the following (we will also write $r$ instead of $r(j)$).
Denote by $s^+$ and $s^-$ the starting and ending point of the segment $\Sigma$ with respect to the orientation induced by $\theta$.
Consider the elliptic boundary value problems
\begin{equation*}\label{eq:problems}
\begin{cases}
\Delta u^\pm(x) = \pm m\,(\rho * \delta_{0})(x) & \mbox{ on } B_{r}(0),\\
\nabla u^\pm(x) \cdot \nu(x) =\varepsilon \overline\sigma^\varepsilon(\varepsilon x+s^\pm) \cdot \nu(x)& \mbox{ on } \partial B_{r}(0),
\end{cases}
\end{equation*}
where $\delta_y$ denotes a Dirac mass centered at $y$, $B_{r}(y)$ denotes the open ball of radius $r$ around $y$, and $\nu$ denotes the outer unit normal to $\partial B_{r}(0)$.
Note that the boundary value problems and their solutions $u^+$ and $u^-$ are independent of $\varepsilon$ due to the definition of $\overline\sigma^\varepsilon$.
Setting
\begin{equation*}
\tilde\sigma^\varepsilon(x)
=\begin{cases}
\nabla u^\pm((x-s^\pm)/\varepsilon)/\varepsilon&\text{if }x\in B_{\varepsilon r}(s^\pm),\\
\overline\sigma^\varepsilon(x)&\text{else,}
\end{cases}
\end{equation*}
(where we assume $\varepsilon$ small enough such that $B_{\varepsilon r}(s^+)$ and $B_{\varepsilon r}(s^-)$ do not intersect)
it is straightforward to check
\begin{equation*}
\dive(\tilde\sigma^\varepsilon)=m\,\rho_\varepsilon * (\delta_{s^-} - \delta_{s^-})\,.
\end{equation*}
Furthermore, to have at least one phase field zero on the new additional support $B_{\varepsilon r}(s^+)\cup B_{\varepsilon r}(s^-)$ of the vector field we set
\begin{equation*}
\varphi^\varepsilon_{1}(x)= \min\left\{\overline\varphi^\varepsilon_{1}(x),P(|x-s^-|),P(|x-s^+|)\right\}
\text{ with }
P(t)=
\begin{cases}
0&\text{if } t\leq r\,\varepsilon,\\
\frac{t}{r\,\varepsilon} &\text{else}
\end{cases}
\end{equation*}
and $\varphi_2^\varepsilon=\overline\varphi_2^\varepsilon,\ldots,\varphi_N^\varepsilon=\overline\varphi_N^\varepsilon$.
Reintroducing now the indices $k$ and $j$, we set
\begin{equation*}
\tilde\sigma_j^\varepsilon=\sum_{k=1}^{M_j}\tilde\sigma_{k,j}^\varepsilon
\quad\text{and}\quad
(\varphi_i^\varepsilon)_j=\min\{(\varphi_i^\varepsilon)_{j,1},\ldots,(\varphi_i^\varepsilon)_{j,M_j}\}
\end{equation*}
for $i=1,\ldots,N$.
Obviously, we have, as desired,
\begin{equation*}
\dive\tilde\sigma_j
=\dive(\rho_\varepsilon*\sigma_j)
=\rho_\varepsilon*\left(\mu_+^j-\mu_-^j\right)\,.
\end{equation*}
Let us now estimate the costs.
Let us assume that $\varepsilon$ is small enough so that all balls $B_{\varepsilon r(j)}(s^{\pm}_{k,j})$ are disjoint
as are the supports $\supp\tilde\sigma_{k,j}^\varepsilon\setminus(B_{\varepsilon r(j)}(s^{+}_{k,j})\cup B_{\varepsilon r(j)}(s^{-}_{k,j}))$ for all $k$.
An upper bound can then be achieved via
\begin{align*}
\E_\varepsilon[\tilde\sigma^\varepsilon_j,(\varphi_1^\varepsilon)_{j},\ldots,(\varphi_N^\varepsilon)_{j}]
&\leq\sum_{k=1}^{M_j}\E_\varepsilon[\overline\sigma^\varepsilon_{j,k},(\overline\varphi_1^\varepsilon)_{k,j},\ldots,(\overline\varphi_N^\varepsilon)_{k,j}]\\
&\quad+\int_{B_{\varepsilon r(j)}(s^+_{j,k})\cup B_{\varepsilon r(j)}(s^-_{j,k})}\omega_\varepsilon\left(\alpha_0,\frac{(\tilde\gamma_\varepsilon)_j}\varepsilon,|\tilde\sigma^\varepsilon_j|\right)\,\d x\\
&\quad+\frac{\beta_1}2\int_{(B_{2\varepsilon r(j)}(s^+_{j,k})\cup B_{2\varepsilon r(j)}(s^-_{j,k}))\cap\{(\varphi_1^\varepsilon)_j<(\overline\varphi_1^\varepsilon)_j\}}\varepsilon|\nabla(\varphi_1^\varepsilon)_j|^2+\frac1\varepsilon((\varphi_1^\varepsilon)_j-1)^2\,\d x\,.
\end{align*}
The last summand can be bounded above by
\begin{equation*}
\beta_1\int_{B_{2\varepsilon r(j)}(0)}\varepsilon|P'(|x|)|^2+\frac1\varepsilon(P(|x|)-1)^2\,\d x\leq C\varepsilon
\end{equation*}
for some constant $C>0$.
For the second summand, note that $(\tilde\gamma_\varepsilon)_j\leq\alpha_1^2\varepsilon^2/\beta_1$ on $B_{\varepsilon r(j)}(s^\pm_{k,j})$ due to $(\varphi_1^\varepsilon)_j=0$ there;
furthermore,
$$
\omega_\varepsilon(\alpha_0,(\tilde\gamma_\varepsilon)_j/\varepsilon,|\tilde\sigma^\varepsilon_j|)
\leq\left(\frac{(\tilde\gamma_\varepsilon)_j}{2\varepsilon}+\varepsilon^p\right)|\tilde\sigma^\varepsilon_j|^2
\leq(\frac{\alpha_1^2\varepsilon}{2\beta_1}+\varepsilon^p)|\tilde\sigma^\varepsilon_j|^2\,.
$$
Thus, if we set $S^\pm=\{l\in\{1,\ldots,M_j\}\,|\,s^\pm_{j,l}=s\}$ for fixed $s=s^+_{k,j}$ or $s=s^-_{k,j}$ we have
\begin{align*}
&\int_{B_{\varepsilon r(j)}(s)}\omega_\varepsilon\left(\alpha_0,\frac{(\tilde\gamma_\varepsilon)_j}\varepsilon,|\tilde\sigma^\varepsilon_j|\right)\,\d x
\leq C\varepsilon\int_{B_{\varepsilon r(j)}(s)}|\tilde\sigma^\varepsilon_j|^2\,\d x\\
&=C\varepsilon\int_{B_{\varepsilon r(j)}(s)}\left|\sum_{l\in S^+}\frac{\nabla u^+_{j,l}(\frac{x-s}\varepsilon)}\varepsilon+\sum_{l\in S^-}\frac{\nabla u^-_{j,l}(\frac{x-s}\varepsilon)}\varepsilon\right|^2\,\d x\\
&\leq C\varepsilon M_j\left[\sum_{l\in S^+}\int_{B_{\varepsilon r(j)}(s)}\left|\frac{\nabla u^+_{j,l}(\frac{x-s}\varepsilon)}\varepsilon\right|^2\,\d x+\sum_{l\in S^-}\int_{B_{\varepsilon r(j)}(s)}\left|\frac{\nabla u^-_{j,l}(\frac{x-s}\varepsilon)}\varepsilon\right|^2\,\d x\right]\\
&= C\varepsilon M_j\left[\sum_{l\in S^+}\int_{B_{r(j)}(0)}\left|\nabla u^+_{j,l}(x)\right|^2\,\d x+\sum_{l\in S^-}\int_{B_{r(j)}(s)}\left|\nabla u^-_{j,l}(x)\right|^2\,\d x\right]\\
&=C(s,\sigma_j)\varepsilon
\end{align*}
for some constant $C(\sigma_j)>0$ depending on the polyhedral divergence measure vector field $\sigma_j$ and the considered point $s$.
In summary, we thus have
\begin{multline*}
\E_\varepsilon[\tilde\sigma^\varepsilon_j,(\varphi_1^\varepsilon)_{j},\ldots,(\varphi_N^\varepsilon)_{j}]\\
\leq\sum_{k=1}^{M_j}\big[\tau(m_{j,k})\hd^1(\Sigma_{j,k})+C(m_{j,k},L_{j,k})\varepsilon^q
+C(s_{j,k}^+,\sigma_j)\varepsilon+C(s_{j,k}^-,\sigma_j)\varepsilon
+C\varepsilon\big]
\leq\E^{\mu_+^j,\mu_-^j}[\sigma_j]+C(\sigma_j)\varepsilon^q
\end{multline*}
for some constant $C(\sigma_j)$ depending on $\sigma_j$.

\medskip\noindent\textit{Step 3. Correction of the global divergence}\\
Recall that the vector field $\sigma^\varepsilon$ to be constructed has to satisfy $\dive\sigma^\varepsilon=\rho_\varepsilon*(\mu_+-\mu_-)$.
In the case $\alpha_0=\infty$ the vector field $\tilde\sigma_j^\varepsilon$ already has that property due to $\mu_\pm^j=\mu_\pm$ (thus we set $\sigma_j^\varepsilon=\tilde\sigma_j^\varepsilon$).
However, if $\alpha_0<\infty$ (so that admissible sources and sinks $\mu_+$ and $\mu_-$ do not have to be finite linear combinations of Dirac masses)
we still need to adapt the vector field to achieve the correct divergence.
To this end, let $\lambda_\pm^j\in\meas(\{x\in\Omega\,|\,\dist(x,\partial\Omega)\geq\varepsilon;\R^2)$ be the optimal Wasserstein-1 flux between $\mu^j_\pm$ and $\mu_\pm$,
that is, $\lambda_j^\pm$ minimizes $\|\lambda\|_\meas$ among all vector-valued measures $\lambda$ with $\dive\lambda=\mu^j_\pm-\mu_\pm$.
Setting
\begin{equation*}
\sigma_j^\varepsilon=\tilde\sigma_j^\varepsilon+\rho_\varepsilon*(\lambda_+^j-\lambda_-^j)
\end{equation*}
we thus obtain $\dive\sigma_j^\varepsilon=\rho_\varepsilon*(\mu_+-\mu_-)$, as desired.
The additional cost can be estimated using the fact
\begin{equation*}
\omega_\varepsilon\left(\alpha_0,\frac{(\tilde\gamma_\varepsilon)_j}\varepsilon,|a+b|\right)
\leq\omega_\varepsilon\left(\alpha_0,\frac{(\tilde\gamma_\varepsilon)_j}\varepsilon,|a|\right)
+\alpha_0|b|+\varepsilon^p(a^2+2b^2)
\end{equation*}
as well as $\|\rho_\varepsilon*\lambda_\pm^j\|_{L^\infty}\leq C\frac{\|\mu_+^j\|_\meas+\|\mu_-^j\|_\meas}\varepsilon$,
where $\|\mu_\pm^j\|_\meas$ is an upper bound for the total mass moved by $\lambda_\pm^j$ and the constant $C>0$ depends on $\rho$.
With those ingredients we obtain
\begin{multline*}
\E_\varepsilon[\sigma_j^\varepsilon,(\varphi_1^\varepsilon)_j,\ldots,(\varphi_N^\varepsilon)_j]
\leq\E_\varepsilon[\tilde\sigma_j^\varepsilon,(\varphi_1^\varepsilon)_j,\ldots,(\varphi_N^\varepsilon)_j]\\
+\alpha_0\|\rho_\varepsilon*(\lambda_+^j-\lambda_-^j)\|_{L^1}
+\varepsilon^p(\|\tilde\sigma_j^\varepsilon\|_{L^2}^2+2\|\rho_\varepsilon*(\lambda_+^j-\lambda_-^j)\|_{L^2}^2)\,.
\end{multline*}
Now $\|\rho_\varepsilon*(\lambda_+^j-\lambda_-^j)\|_{L^1}\leq\|\lambda_+^j-\lambda_-^j\|_\meas\leq\|\lambda_+^j\|_\meas+\|\lambda_-^j\|_\meas=W_1(\mu_+^j,\mu_+)+W_1(\mu_-^j,\mu_-)=\kappa_j$ for a constant $\kappa_j>0$ satisfying
\begin{equation*}
\kappa(\sigma_j)\to0\qquad\text{as }j\to\infty
\end{equation*}
since the Wasserstein-1 distance $W_1(\cdot,\cdot)$ metrizes weak-$*$ convergence.
Furthermore, in the previous steps we have already estimated $\varepsilon^p\|\tilde\sigma_j^\varepsilon\|_{L^2}^2\leq C(\sigma_j)\varepsilon^q$.
Finally,
\begin{multline*}
\varepsilon^p\|\rho_\varepsilon*(\lambda_+^j-\lambda_-^j)\|_{L^2}^2
\leq\varepsilon^p\|\rho_\varepsilon*(\lambda_+^j-\lambda_-^j)\|_{L^\infty}\|\rho_\varepsilon*(\lambda_+^j-\lambda_-^j)\|_{L^1}\\
\leq2C\varepsilon^{p-1}(\|\mu_+^j\|_\meas+\|\mu_-^j\|_\meas)\kappa_j\,.
\end{multline*}
Summarizing,
\begin{multline*}
\E_\varepsilon[\sigma_j^\varepsilon,(\varphi_1^\varepsilon)_j,\ldots,(\varphi_N^\varepsilon)_j]
\leq\E^{\mu_+^j,\mu_-^j}[\sigma_j]+C(\sigma_j)\varepsilon^q
+\alpha_0\kappa_j
+2C\varepsilon^{p-1}(\|\mu_+^j\|_\meas+\|\mu_-^j\|_\meas)\kappa_j\\
\leq\E^{\mu_+^j,\mu_-^j}[\sigma_j]+C\kappa_j+C(\sigma_j)\varepsilon^q
\end{multline*}
for some constant $C>0$ and $C(\sigma_j)>0$ depending only on $\sigma_j$.

\medskip\noindent\textit{Step 4. Extraction of a diagonal sequence}\\
We will set $\sigma^\varepsilon=\sigma_j(\varepsilon)^\varepsilon$, $\varphi_1^\varepsilon=(\varphi_1^\varepsilon)_{j(\varepsilon)},\ldots,\varphi_N^\varepsilon=(\varphi_N^\varepsilon)_{j(\varepsilon)}$ for a suitable choice $j(\varepsilon)$.
Indeed, for a monotonic sequence $\varepsilon_1,\varepsilon_2,\ldots$ approaching zero we set $j(\varepsilon_1)=1$ and
\begin{equation*}
j(\varepsilon_{i+1})=\begin{cases}
j(\varepsilon_{i})&\text{if }C(\sigma_{j(\varepsilon_i)+1})\varepsilon_i^q>\frac1{j(\varepsilon_i)+1},\\
j(\varepsilon_{i})+1&\text{else.}
\end{cases}
\end{equation*}
Then $j(\varepsilon_i)\to\infty$ and $C(\sigma_{j(\varepsilon_i)})\varepsilon_i^q\to0$ as $i\to\infty$ so that
\begin{align*}
E^{\mu_+,\mu_-}_{\varepsilon_i}[\sigma^{\varepsilon_i},\varphi_1^{\varepsilon_i},\ldots,\varphi_N^{\varepsilon_i}]
&=\E_{\varepsilon_i}[\sigma_{j({\varepsilon_i})}^{\varepsilon_i},(\varphi_1^{\varepsilon_i})_{j({\varepsilon_i})},\ldots,(\varphi_N^{\varepsilon_i})_{j({\varepsilon_i})}]\\
&\leq\E^{\mu_+^{j({\varepsilon_i})},\mu_-^{j({\varepsilon_i})}}[\sigma_{j({\varepsilon_i})}]+C\kappa_{j({\varepsilon_i})}+C(\sigma_{j({\varepsilon_i})}){\varepsilon_i}^q\\
&\leq\E^{\mu_+^{j({\varepsilon_i})},\mu_-^{j({\varepsilon_i})}}[\sigma_{j({\varepsilon_i})}]+C\kappa_{j({\varepsilon_i})}+\frac1{j({\varepsilon_i})}
\to E^{\mu_+,\mu_-}[\sigma]\,.
\qedhere
\end{align*}
\end{proof}

\section{Numerical experiments}\label{Section4}

Here we describe the numerical discretization with finite elements and the subsequent optimization procedure used in our experiments.

\subsection{Discretization}

The proposed phase field approximation allows a simple numerical discretization with piecewise constant and piecewise linear finite elements.
Let $\mathcal{T}_h$ be a triangulation of the space $\Omega$ of grid size $h$ such that $\overline{\Omega}=\bigcup_{T\in\mathcal{T}_h}\overline T$.
Denoting by $\mathbb P^m$ the space of polynomials of degree $m$, we define the finite element spaces
\begin{align*}
X_h^0 &= \{v_h\in L^\infty(\Omega) \ | \ v_h\restriction_T \in\mathbb{P}^0 \ \forall \ T\in\mathcal{T}_h \}\,,\\
X_h^1 &= \{v_h\in C^0(\overline{\Omega}) \ | \ v_h\restriction_T \in\mathbb{P}^m \ \forall \ T\in\mathcal{T}_h \}\,.
\end{align*}
Using as before the notation
\begin{equation*}
\E_\varepsilon[\sigma,\varphi_1,\ldots,\varphi_N]
=\int_\Omega\omega_\varepsilon\left(\alpha_0,\frac{\gamma_\varepsilon(x)}{\varepsilon},|\sigma(x)|\right)\,\d x
+\sum_{i=1}^N\beta_i\L_\varepsilon[\varphi_i]\,,
\end{equation*}
the discretized phase field problem now reads
\begin{multline*}
\min_{\substack{(\sigma,\varphi_1,\ldots,\varphi_N)\in X_h^0\times(X_h^1)^N\\\varphi_1\restriction_{\partial\Omega}=\ldots=\varphi_N\restriction_{\partial\Omega}=1}}\E_\varepsilon[\sigma,\varphi_1,\ldots,\varphi_N]\\
\quad\text{such that }
\int_\Omega-\sigma\cdot\nabla\lambda\,\d x=\int_\Omega\rho_\varepsilon*(\mu_+-\mu_-)v_h\,\d x\;\forall\lambda\in X_h^1\,,
\end{multline*}
where all integrals are evaluated using midpoint quadrature and the divergence constraint is enforced in weak form.
In our numerical experiments we use $\Omega=(0,1)^2$ with a regular quadrilateral grid whose squares are all divided into two triangles.


\subsection{Optimization}

Here we describe the numerical optimization method used to find a minimizer of the objective functional.
We first elaborate the simplest case in which the transport cost $\tau$ only features a single affine segment, that is, $\alpha_0=\infty$ and $N=1$.
Afterwards we consider the setting with $N>1$ and subsequently also with $\alpha_0<\infty$, which requires more care due to the higher complexity of the energy landscape.

\paragraph{Single phase field and no diffuse mass flux ($N=1$, $\alpha_0=\infty$)}

In this case the energy reads 
\begin{equation*} \label{eq:ProblemWithOnePhi}
\mathcal{E}_\varepsilon[\sigma,\varphi] = \int_\Omega \frac{\gamma_\varepsilon(x)}{\varepsilon}\frac{|\sigma(x)|^2}2 + \frac{\beta_1}2\left(\varepsilon|\nabla\varphi(x)|^2
+ \frac{(\varphi(x)-1)^2}\varepsilon\right)\, \d x
\end{equation*}
with $\gamma_\varepsilon(x) = \varphi(x)^2+\alpha_1^2\varepsilon^2/\beta_1$. The employed optimization method is similar to the one presented in \cite{ChaFerMer17} and updates
the variables $\sigma$ and $\varphi$ alternatingly.

Let us abbreviate $f_\varepsilon=\rho_\varepsilon*(\mu_+-\mu_-)$.
For minimization with respect to $\sigma$, we use the dual variable $\lambda\in X_h^1$ to enforce the divergence constraint and write 
\begin{align*}
\min_{\substack{\sigma\in X_h^0\\\int_\Omega\sigma\cdot\nabla\lambda+\lambda f_\varepsilon\,\d x=0\,\forall\lambda\in X_h^1}}\int_\Omega \frac{\gamma_\varepsilon}{\varepsilon}\frac{|\sigma|^2}2\,\d x
&=\min_{\sigma\in X_h^0}\max_{\lambda\in X_h^1}\int_\Omega \frac{\gamma_\varepsilon}{\varepsilon}\frac{|\sigma|^2}2 - \sigma\cdot\nabla\lambda-\lambda f_\varepsilon\,\d x\\
&=\max_{\lambda\in X_h^1}\min_{\sigma\in X_h^0}\int_\Omega \frac{\gamma_\varepsilon}{\varepsilon}\frac{|\sigma|^2}2 - \sigma\cdot\nabla\lambda-\lambda f_\varepsilon\,\d x\,,
\end{align*}
where the last step follows by standard convex duality.
The minimization in $\sigma$ can be performed explicitly, yielding $\sigma=\frac{\varepsilon\nabla\lambda}{\gamma_\varepsilon}$. Inserting this solution leads to a maximization problem in $\lambda$,
\begin{equation*}
\underset{\lambda}{\max} \ \int_\Omega -\frac{\varepsilon|\nabla\lambda|^2}{2\gamma_\varepsilon} - \lambda f_\varepsilon\,\d x\,. 
\end{equation*}
The corresponding optimality conditions,
\begin{equation} \label{eq:ProblemLambda}
\int_\Omega \frac{\varepsilon\nabla\lambda\cdot\nabla\mu}{\gamma_\varepsilon}\,\d x = - \int_\Omega \mu f_\varepsilon\,\d x \quad \forall \mu\in X_h^1\,, 
\end{equation}
represent a linear system of equations that can readily be solved numerically for $\lambda$ so that subsequently $\sigma$ can be computed
(note that $\int_\Omega f_\varepsilon\,\d x=0$ so that the above equation has a solution).

Fixing $\sigma$, the optimality condition for $\varphi$ reads 
\begin{equation} \label{eq:ProblemPhi}
\int_\Omega \frac{|\sigma|^2\varphi\psi}{\varepsilon} + \beta_1\varepsilon\nabla\varphi\cdot\nabla\psi + \frac{\beta_1}{\varepsilon}(\varphi-1)\psi\,\d x = 0
\quad\forall\psi\in X_h^1\text{ with }\psi\restriction_{\partial\Omega}=0\,,
\end{equation}
which can again be solved under the constraint $\varphi_1\restriction_{\partial\Omega}=1$ by a linear system solver.

In addition to the alternating minimization a stepwise decrease of the phase field parameter $\varepsilon$ is performed,
starting from a large value $\varepsilon_{\text{start}}$ for which the energy landscape shows fewer local minima.
\Cref{alg:SPFS} summarizes the procedure.

\begin{algorithm}
	\caption{Minimization for $N=1$, $\alpha_0=\infty$}
	\label{alg:SPFS}
	\begin{algorithmic}
		\Function{SPFS}{$\varepsilon_{\text{start}}, \varepsilon_{\text{end}}, N_{\text{iter}}, \alpha_1, \beta_1, \mu_+, \mu_-, \rho_{\varepsilon_{\text{end}}}$}
		\State set $f_\varepsilon = (\mu_+-\mu_-)\ast\rho_{\varepsilon_{\text{end}}}$, $\sigma^0=0$
		\For{$j=1,\ldots,N_{\text{iter}}$}
		\State set $\varepsilon_j=\varepsilon_{\text{start}}-(j-1)\frac{\varepsilon_{\text{start}}-\varepsilon_{\text{end}}}{N_{\text{iter}}-1}$
		\State set $\varphi^j$ to the solution of \eqref{eq:ProblemPhi} for given fixed $\sigma=\sigma^{j-1}$
		\State set $\gamma^j_\varepsilon = \left(\varphi^j\right)^2+\alpha_1^2\varepsilon_j^2/\beta_1$ 
		\State set $\lambda^j$ to the solution of \eqref{eq:ProblemLambda} for given fixed $\gamma_\varepsilon=\gamma^j_\varepsilon$
		\State set $\sigma^j = \frac{\varepsilon_j\nabla\lambda^j}{2\gamma^j_\varepsilon}$
		\EndFor
		\EndFunction
		\State \textbf{return} $\sigma^{N_{\text{iter}}},\varphi^{N_{\text{iter}}},\lambda^{N_{\text{iter}}}$
	\end{algorithmic}
\end{algorithm}

\paragraph{Multiple phase fields and no diffuse mass flux ($N>1$, $\alpha_0=\infty$)}
Again we aim for an alternating minimization scheme.
Fixing $\varphi_1,\ldots,\varphi_N$, the optimization for $\sigma$ is the same as before, since only $\gamma_\varepsilon$ changes.
However, the optimization in the phase fields $\varphi_1,\ldots,\varphi_N$ is strongly nonconvex (due to the minimum in $\gamma_\varepsilon$)
and thus requires a rather good initialization and care in the alternating scheme.

To avoid minimization for phase field $\varphi_i$ with the $\min$-function inside $\gamma_\varepsilon$ we perform a heuristic operator splitting:
in each iteration of the optimization we first identify at each location which term inside $\gamma_\varepsilon=\min(\varphi_1^2+\alpha_1^2\varepsilon^2/\beta_1,\ldots,\varphi_N^2+\alpha_N^2\varepsilon^2/\beta_N)$ is the minimizer,
which is equivalent to specifying the regions
\begin{equation}\label{eqn:regionUpdate}
R_i^\varepsilon = \{x\in\Omega \, | \, \gamma_\varepsilon(x) = \varphi_i(x)^2+\alpha_i^2\varepsilon^2/\beta_i \}\,,
\quad i=1,\ldots,N.
\end{equation}
Afterwards we then optimize the energy $\E_\varepsilon$ separately for each phase field $\varphi_i$ assuming the regions $R_i^\varepsilon$ fixed, that is, we minimize
\begin{equation*}
\sum_{i=1}^N \int_\Omega \frac{\varphi_i(x)^2+\alpha_i^2\varepsilon^2/\beta_i}{\varepsilon}\frac{|\sigma(x)|^2}2\chi_{R_i^\varepsilon}(x) +
\frac{\beta_i}2\varepsilon|\nabla\varphi_i(x)|^2 + \frac{\beta_i}2\frac{(\varphi_i(x)-1)^2}{\varepsilon}\,\d x\,,
\end{equation*}
where $\chi_{R_i^\varepsilon}$ is the characteristic function of region $R_i^\varepsilon$.
Similarly to the case $N=1$ of a single phase field, this amounts to solving the linear system
\begin{equation}\label{eq:ProblemMultPhi}
\int_\Omega \frac{\varphi_i\psi}{\varepsilon}|\sigma|^2\chi_{R_i^\varepsilon} +
\beta_i\varepsilon\nabla\varphi_i\cdot\nabla\psi + \beta_i\frac{\varphi_i\psi}{\varepsilon}\,\d x
\quad\forall\psi\in X_h^1\text{ with }\psi\restriction_{\partial\Omega}=0
\end{equation}
for $\varphi_i\in X_h^1$ with $\varphi_i\restriction_{\partial\Omega}=1$.


Since in the above simple approach the regions $R_i^\varepsilon$ and phase fields $\varphi_i$ can move, but cannot nucleate within a different region
(indeed, imagine for instance $R_1^\varepsilon=\Omega$, then $\varphi_2,\ldots,\varphi_N$ will be optimized to equal $1$ everywhere
so that in the next iteration again $R_1^\varepsilon=\Omega$),
a suitable initial guess is crucial.
To provide such a guess for the initial regions $R_i^\varepsilon$, we proceed as follows.
We first generate some flux network $\sigma^0$ consistent with the given source and sink.
This can for instance be done using the previously described algorithm for just a single phase field:
in our simulations we simply ignore $\varphi_2,\ldots,\varphi_N$ and pretend only the phase field $\varphi_1$ would exist
(essentially this means we replace the cost function $\tau$ by $m\mapsto\alpha_1m+\beta_1$ for $m>0$;
of course, an alternative choice would be to take $m\mapsto\alpha m+\beta$ for some $\alpha,\beta>0$ that better approximate $\tau$ for a larger range of values $m$).
We then identify the total mass flowing through each branch of $\sigma^0$.
To this end we convolve $|\sigma^0|$ with the characteristic function $\chi_{B_r(0)}$ of a disc of radius $r$.
If $r$ is sufficiently large compared to the width of the support of $\sigma^0$, we obtain
\begin{equation*}
\left(\chi_{B_r(0)}\ast|\sigma^0|\right)(x)
= \int_{B_r(x)} |\sigma^0|(y) \,\d y
\approx 2rm(x)\,,
\end{equation*}
where $m(x)$ denotes the mass flux through the nearby branch of $\sigma^0$.
Now we can compute the regions
\begin{equation}\label{eq:OptIndex}
R_i^\varepsilon = \{x\in\Omega\,|\,i=\alphargmin_{j=1,\ldots,N}\{\alpha_jm(x)+\beta_j\}\}
\end{equation}
and furthermore use $\sigma^0$ as initial guess of the vector field.
\Cref{alg:MPFS} summarizes the full procedure.

\begin{algorithm}
	\caption{Minimization for $N>1$, $\alpha_0=\infty$}
	\label{alg:MPFS}
	\begin{algorithmic}
		\Function{MPFS}{$\varepsilon_{\text{start}}, \varepsilon_{\text{end}}, N_{\text{iter}}, \alpha_1,\ldots,\alpha_N, \beta_1,\ldots,\beta_N, \mu_+, \mu_-, \rho_{\varepsilon_{\text{end}}}$}
		\State set $f_\varepsilon = (\mu_+-\mu_-)\ast\rho_{\varepsilon_{\text{end}}}$
		\State set $\left(\sigma^0,\cdot,\cdot\right) = SPFS(\varepsilon_{\text{start}},\varepsilon_{\text{end}},N_{\text{iter}},\alpha_1, \beta_1, \mu_+, \mu_-, \rho_{\varepsilon_{\text{end}}})$ 
		\State compute regions $R_1^\varepsilon,\ldots,R_N^\varepsilon$ via \eqref{eq:OptIndex}
		\For{$j=1,\ldots,N_{\text{iter}}$}
		\State set $\varepsilon_j=\varepsilon_{\text{start}}-(j-1)\frac{\varepsilon_{\text{start}}-\varepsilon_{\text{end}}}{N_{\text{iter}}-1}$
		\State set $\varphi_i^j$ to the solution of \eqref{eq:ProblemMultPhi} for given fixed $\sigma=\sigma^{j-1}$, $i=1,\ldots,N$
		\State update regions $R_1^\varepsilon,\ldots,R_N^\varepsilon$ via \eqref{eqn:regionUpdate}
		\State set $\gamma^j_\varepsilon = \min_{i=1,\ldots,N} \left((\varphi_i^j)^2+\alpha_i^2\varepsilon^2/\beta_i\right)$ 
		\State set $\lambda^j$ to the solution of \eqref{eq:ProblemLambda} for given fixed $\gamma_\varepsilon=\gamma^j_\varepsilon$
		\State set $\sigma^j = \frac{\varepsilon_j\nabla\lambda^j}{2\gamma^j_\varepsilon}$
		\EndFor
		\EndFunction
		\State \textbf{return} $\sigma^{N_{\text{iter}}},\varphi_1^{N_{\text{iter}}},\ldots,\varphi_N^{N_{\text{iter}}}$
	\end{algorithmic}
\end{algorithm}

Note that the estimate $m(x)$ of the flowing mass is only valid in close proximity of the support of $\sigma^0$
so that the regions $R_i^\varepsilon$ are only reliable near $\sigma^0$.
However, away from $\sigma^0$ all phase fields will be close to $1$ anyway so that the regions $R_i^\varepsilon$ do not play a role there.
The effectiveness of the initialization can be further improved by an energy rescaling which we typically perform in our simulations:
Recall that the optimal width \eqref{eqn:supportWidth} of the support of the vector field $\sigma$ not only depends on the transported mass $m(x)$, but also on which phase field is active at $x$.
Thus, initializing with some vector field $\sigma$ that was computed based on preliminary active phase fields may erroneously give slight preference to incorrect phase fields.
This can be avoided by a small parameter change which assigns a different $\varepsilon$ to each phase field.
Indeed, setting $\varepsilon_i=\beta_i\varepsilon/\alpha_i$ to be the phase field parameter associated with phase field $\varphi_i$,
equation \eqref{eqn:supportWidth} shows that the support width of $\sigma$ becomes $m\varepsilon$ and thus no longer depends on the phase field.
Thus, in practice we usually work with the phase field cost
\begin{equation}\label{eqn:rescaledCost}
\tilde\E_\varepsilon[\sigma,\varphi_1,\ldots,\varphi_N]
=\int_\Omega\omega_\varepsilon\left(\alpha_0,\frac{\tilde\gamma_\varepsilon(x)}{\varepsilon},|\sigma(x)|\right)\,\d x
+\sum_{i=1}^N\beta_i\L_{\varepsilon_i}[\varphi_i]
\end{equation}
with $\tilde\gamma_\varepsilon(x)=\min_{i=1,\ldots,N}\{\varphi_i(x)^2+\alpha_i^2\varepsilon\varepsilon_i/\beta_i\}=\min_{i=1,\ldots,N}\{\varphi_i(x)^2+\alpha_i\varepsilon^2\}$.
The $\Gamma$-convergence result can readily be adapted to this case.


\paragraph{Multiple phase fields and diffuse mass flux ($N>1$, $\alpha_0<\infty$)}
The difference to the previous case is that now there may be nonnegligible mass flux $\sigma$ in regions where no phase field $\varphi_1,\ldots,\varphi_N$ is active.
Correspondingly, we adapt the previous alternating minimization scheme by introducing the set
\begin{equation*}
R_0^\varepsilon = \left\{x\in\Omega \ | \ |\sigma(x)| > \frac{\alpha_0}{\gamma_\varepsilon(x)\slash\varepsilon} \right\}\,,
\end{equation*}
which according to the form of $\omega_\varepsilon$ in \Cref{def:phase fieldFunctional} describes the region in which mass flux $\sigma$ is penalized by $\alpha_0|\sigma|$.
The regions in which the $i$\textsuperscript{th} phase field is active are thus modified to
\begin{equation}\label{eq:OptIndex2}
\tilde{R}_i^{\varepsilon} = R_i^{\varepsilon}\setminus R_0^{\varepsilon}\,.
\end{equation}
As before, we now separately minimize
\begin{equation}\label{eq:ProblemMultPhi2}
\sum_{i=1}^N \int_\Omega \frac{\varphi_i(x)^2+\alpha_i^2\varepsilon^2/\beta_i}{\varepsilon}\frac{|\sigma(x)|^2}2\chi_{\tilde{R}_i^\varepsilon}(x) +
\frac{\beta_i}2\varepsilon|\nabla\varphi_i(x)|^2 + \frac{\beta_i}2\frac{(\varphi_i(x)-1)^2}{\varepsilon}\,\d x\,,
\end{equation}
for each phase field $\varphi_i$.
The optimization for $\sigma$ changes a little compared to the previous case since the problem is no longer quadratic and thus no longer reduces to solving a linear system.
Instead we will perform a single step of Newton's method in each iteration.
The optimization problem in $\sigma$ reads
\begin{equation*}
\min_{\substack{\sigma\in X_h^0\\\int_\Omega\sigma\cdot\nabla\lambda+\lambda f_\varepsilon\,\d x=0\,\forall\lambda\in X_h^1}}\int_\Omega\omega_\varepsilon\left(\alpha_0,\frac{\gamma_\varepsilon}{\varepsilon},|\sigma|\right)\,\d x\,,
\end{equation*}
and its optimality conditions are
\begin{equation}
\begin{aligned}
0 &= \int_{\Omega} \xi(|\sigma|)\sigma\cdot\psi - \nabla\lambda\cdot\psi \, \d x
&\text{for all }\psi\in X_h^0\,,\\
0 &= \int_\Omega\sigma\cdot\nabla\mu+\mu f_\varepsilon\,\d x
&\text{for all }\mu\in X_h^1\,,
\end{aligned}
\end{equation}
where
\begin{equation*}
\xi(|\sigma|)=\frac1{|\sigma|}\partial_3\omega_\varepsilon\left(\alpha_0,\frac{\gamma_\varepsilon}{\varepsilon},|\sigma|\right)=\min\left(\frac{\gamma_\varepsilon}\varepsilon,\frac{\alpha_0}{|\sigma|}\right)+2\varepsilon^p\,.
\end{equation*}
Letting $\hat\sigma$ and $\hat\lambda$ be the coefficients of $\sigma$ and $\lambda$ in some basis $\{b_i^0\}_i$ of $X_h^0$ and $\{b_i^1\}_i$ of $X_h^1$, respectively, the optimality conditions can be expressed as
\begin{equation*}
0 = R(\hat\sigma,\hat\lambda)
= \begin{pmatrix} M\left[\xi(|\sigma|)\right] & B \\ B^T & 0 \end{pmatrix} \begin{pmatrix} \hat{\sigma} \\ \hat{\lambda} \end{pmatrix} + \begin{pmatrix} 0 \\ F \end{pmatrix}\,,
\end{equation*}
where the finite element matrices and vectors are defined as
\begin{equation*}
M[\xi]_{ij}=\int_\Omega \xi b_i^0 b_j^0\,\d x\,,\quad
B_{ij}=\int_\Omega b_i^0 \nabla b_j^1\,\d x\,,\quad
F_{i}=\int_\Omega b_i^1 f_\varepsilon\,\d x\,.
\end{equation*}
In each iteration of the alternating minimization scheme we now take one Newton step for $0=R(\hat\sigma,\hat\lambda)$.
As before, the algorithm requires a suitable initial guess, which is determined in the same way as for the case without diffuse component. \Cref{alg:MPFSD} summarizes the alternating scheme.

\begin{algorithm}
	\caption{Minimization for $N>1$, $\alpha_0<\infty$}
	\label{alg:MPFSD}
	\begin{algorithmic}
		\Function{MPFSD}{$\varepsilon_{\text{start}}, \varepsilon_{\text{end}}, N_{\text{iter}}, \alpha_1,\ldots,\alpha_N, \beta_1,\ldots,\beta_N, \mu_+, \mu_-, \rho_{\varepsilon_{\text{end}}}$}
		\State set $f_\varepsilon = (\mu_+-\mu_-)\ast\rho_{\varepsilon_{\text{end}}}$
		\State set $\left(\sigma^0,\cdot,\lambda^0\right) = SPFS(\varepsilon_{\text{start}},\varepsilon_{\text{end}},N_{\text{iter}},\alpha_1, \beta_1, \mu_+, \mu_-, \rho_{\varepsilon_{\text{end}}})$
		\State compute regions $R_0^\varepsilon,\tilde{R}_1^\varepsilon,\ldots,\tilde{R}_N^\varepsilon$ via \eqref{eq:OptIndex2}
		\For{$j=1,\ldots,N_{\text{iter}}$}
		\State set $\varepsilon_j=\varepsilon_{\text{start}}-(j-1)\frac{\varepsilon_{\text{start}}-\varepsilon_{\text{end}}}{N_{\text{iter}}-1}$
		\State set $\varphi_i^j$ to the minimizer of \eqref{eq:ProblemMultPhi2} for given fixed $\sigma=\sigma^{j-1}$, $i=1,\ldots,N$
		\State update regions $R_0^\varepsilon,\tilde{R}_1^\varepsilon,\ldots,\tilde{R}_N^\varepsilon$ via \eqref{eqn:regionUpdate} and \eqref{eq:OptIndex2}
		\State set $\gamma^j_\varepsilon = \min_{i=1,\ldots,N} \left((\varphi_i^j)^2+\alpha_i^2\varepsilon^2/\beta_i\right)$ 
		\State set $(\hat\sigma^j,\hat\lambda^j)=(\hat\sigma^{j-1},\hat\lambda^{j-1})-DR(\hat\sigma^{j-1},\hat\lambda^{j-1})^{-1}R(\hat\sigma^{j-1},\hat\lambda^{j-1})$ for $\gamma_\varepsilon=\gamma^{j-1}_\varepsilon$
		\EndFor
		\EndFunction
		\State \textbf{return} $\sigma^{N_{\text{iter}}},\varphi_1^{N_{\text{iter}}},\ldots,\varphi_N^{N_{\text{iter}}}$
	\end{algorithmic}
\end{algorithm}

\subsection{Experimental results}

The algorithms were implemented in MATLAB$^{\tiny{\textcircled{c}}}$; parameters reported in this section refer to the rescaled cost \eqref{eqn:rescaledCost}.
We first present simulation results for a single phase field and no diffuse mass flux, $N=1$ and $\alpha_0=\infty$.
\Cref{fig:SteinerProblem1,fig:SteinerProblem2} show solutions for a source and a number of equal sinks arranged as a regular polygon.
If $\alpha_1$ is small as in \Cref{fig:SteinerProblem1}, the solution looks similar to the Steiner tree (which would correspond to $\alpha_1=0$),
while the solutions become much more asymmetric for larger $\alpha_1$ as in \Cref{fig:SteinerProblem2}.
More complex examples are displayed in \Cref{fig:singlePhaseField}.


\begin{figure}
	\centering
	\setlength\unitlength{.23\textwidth}
	\reflectbox{\includegraphics[width = .23\textwidth,trim=130 85 100 80,clip]{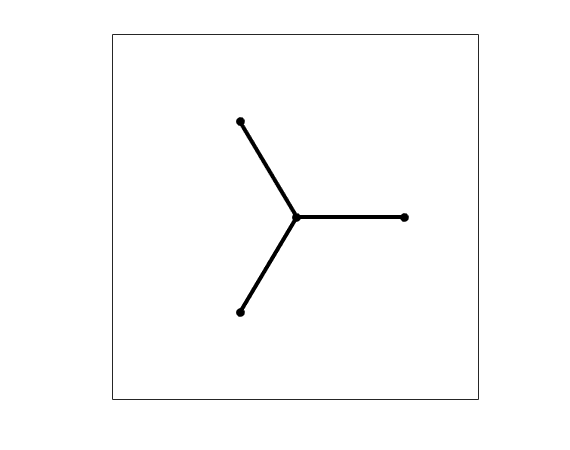}}
	\begin{picture}(0,0)(1,0)\put(-.05,.42){\small$+$}\put(.72,.0){\small$-$}\put(.72,.87){\small$-$}\end{picture}
	\reflectbox{\includegraphics[width = .23\textwidth,trim=130 85 100 80,clip]{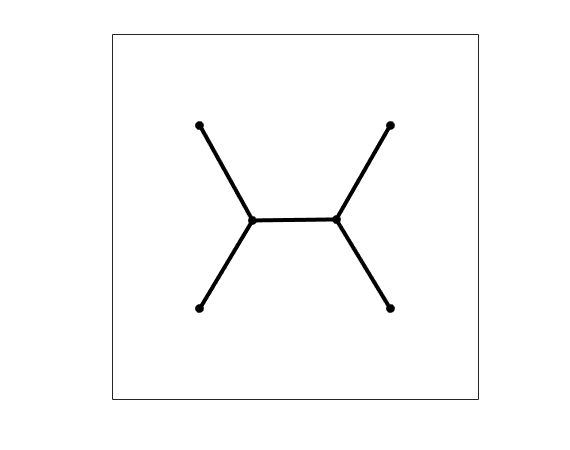}}
	\begin{picture}(0,0)(1,0)\put(.87,.01){\small$+$}\put(.02,.01){\small$-$}\put(.06,.85){\small$-$}\put(.87,.85){\small$-$}\end{picture}
	\reflectbox{\includegraphics[width = .23\textwidth,trim=130 85 110 80,clip]{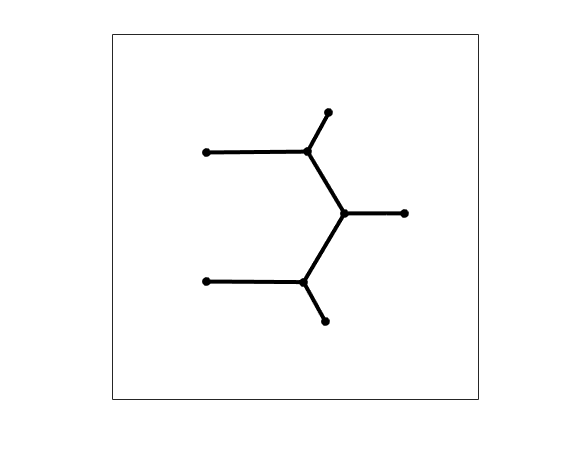}}
	\begin{picture}(0,0)(1,0)\put(-.1,.46){\small$+$}\put(.26,-.02){\small$-$}\put(.86,.14){\small$-$}\put(.86,.77){\small$-$}\put(.26,.94){\small$-$}\end{picture}
	\reflectbox{\includegraphics[width = .23\textwidth,trim=130 85 100 80,clip]{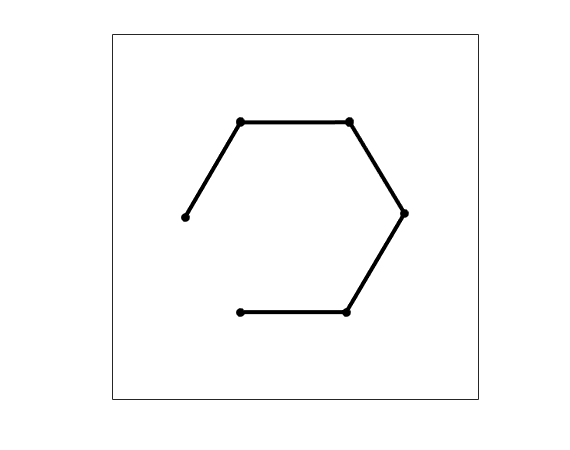}}
	\begin{picture}(0,0)(1,0)\put(-.05,.44){\small$+$}\put(.20,.0){\small$-$}\put(.68,.0){\small$-$}\put(.23,.86){\small$-$}\put(.67,.87){\small$-$}
	\put(.91,.36){\small$-$}\end{picture} \\
	\includegraphics[width = \unitlength,trim=25 15 45 15,clip]{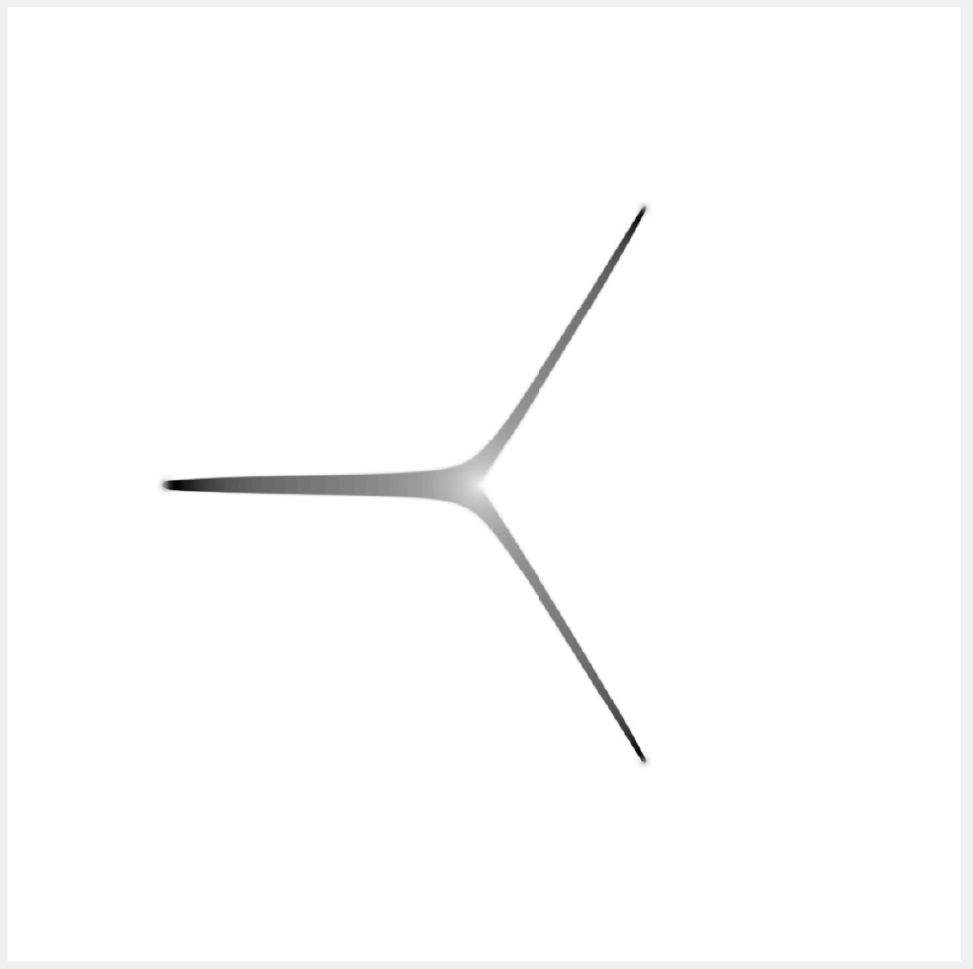}
	\begin{picture}(0,0)(1,0)\put(-.05,.57){\small$+$}\put(.73,.13){\small$-$}\put(.73,1.0){\small$-$}\end{picture}
	\includegraphics[width = \unitlength,trim=25 15 35 15,clip]{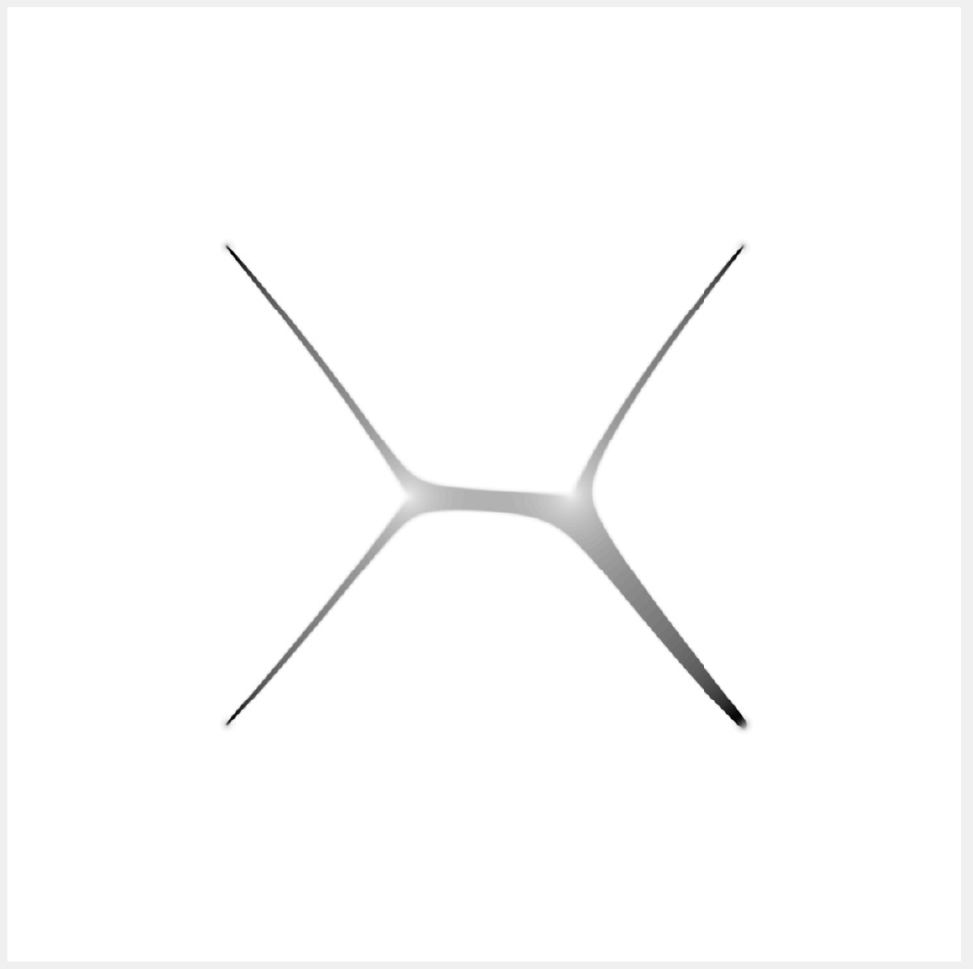} 
	\begin{picture}(0,0)(1,0)\put(.83,.15){\small$+$}\put(.06,.15){\small$-$}\put(.07,.91){\small$-$}\put(.82,.91){\small$-$}\end{picture}
	\includegraphics[width = \unitlength,trim=25 15 35 15,clip]{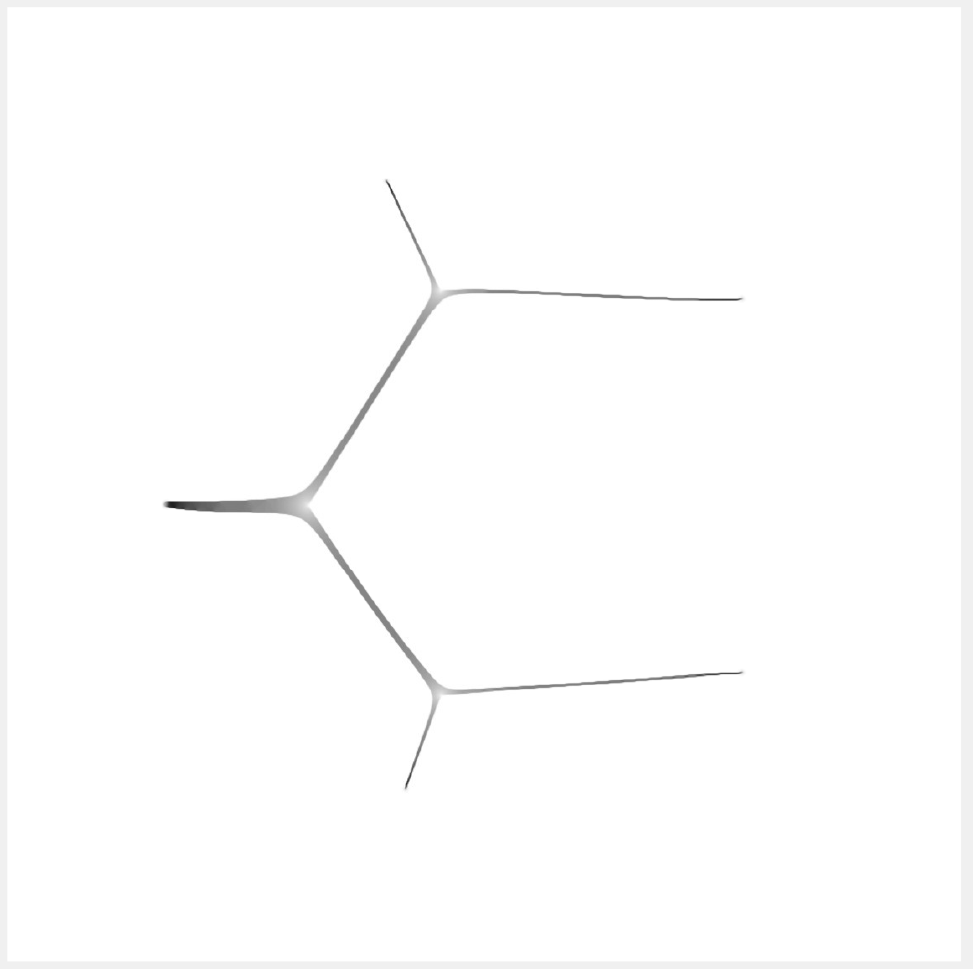} 
	\begin{picture}(0,0)(1,0)\put(-.05,.5){\small$+$}\put(.3,.09){\small$-$}\put(.85,.24){\small$-$}\put(.85,.82){\small$-$}\put(.3,.98){\small$-$}\end{picture}
	\includegraphics[width = \unitlength,trim=25 15 35 15,clip]{Result6Points1_sigma.pdf}
	\begin{picture}(0,0)(1,0)\put(-.05,.5){\small$+$}\put(.2,.11){\small$-$}\put(.65,.11){\small$-$}\put(.21,.94){\small$-$}\put(.66,.93){\small$-$}
	\put(.9,.51){\small$-$}\end{picture} \\
	\includegraphics[width = \unitlength,trim=64 36 68 25,clip]{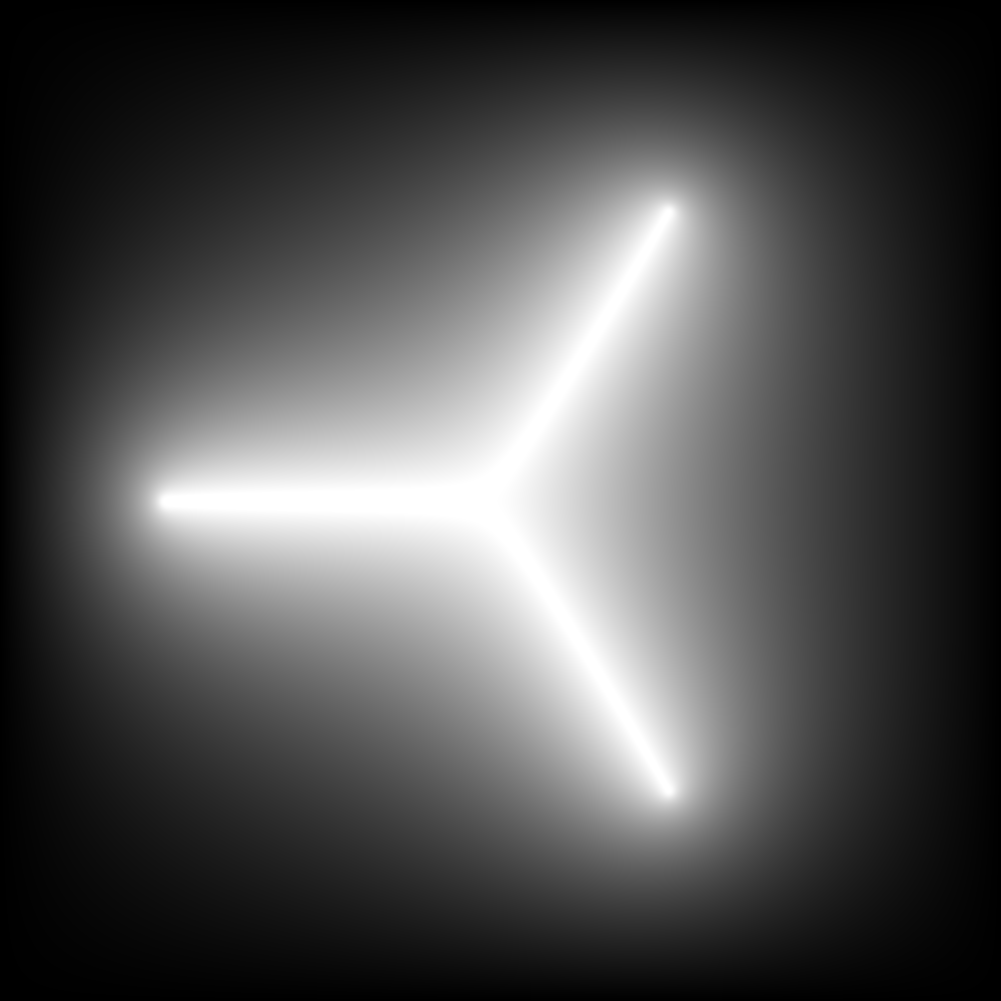} 
	\includegraphics[width = \unitlength,trim=64 36 68 25,clip]{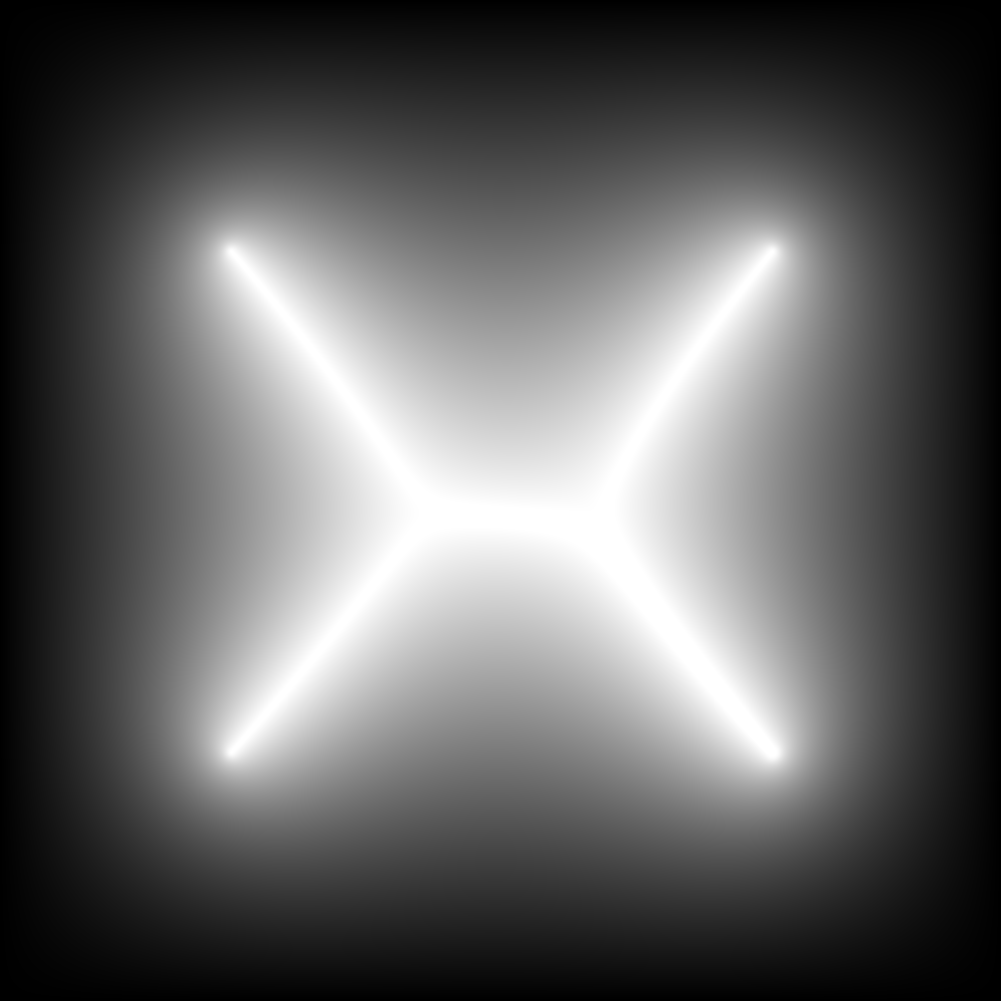} 
	\includegraphics[width = \unitlength,trim=64 36 68 25,clip]{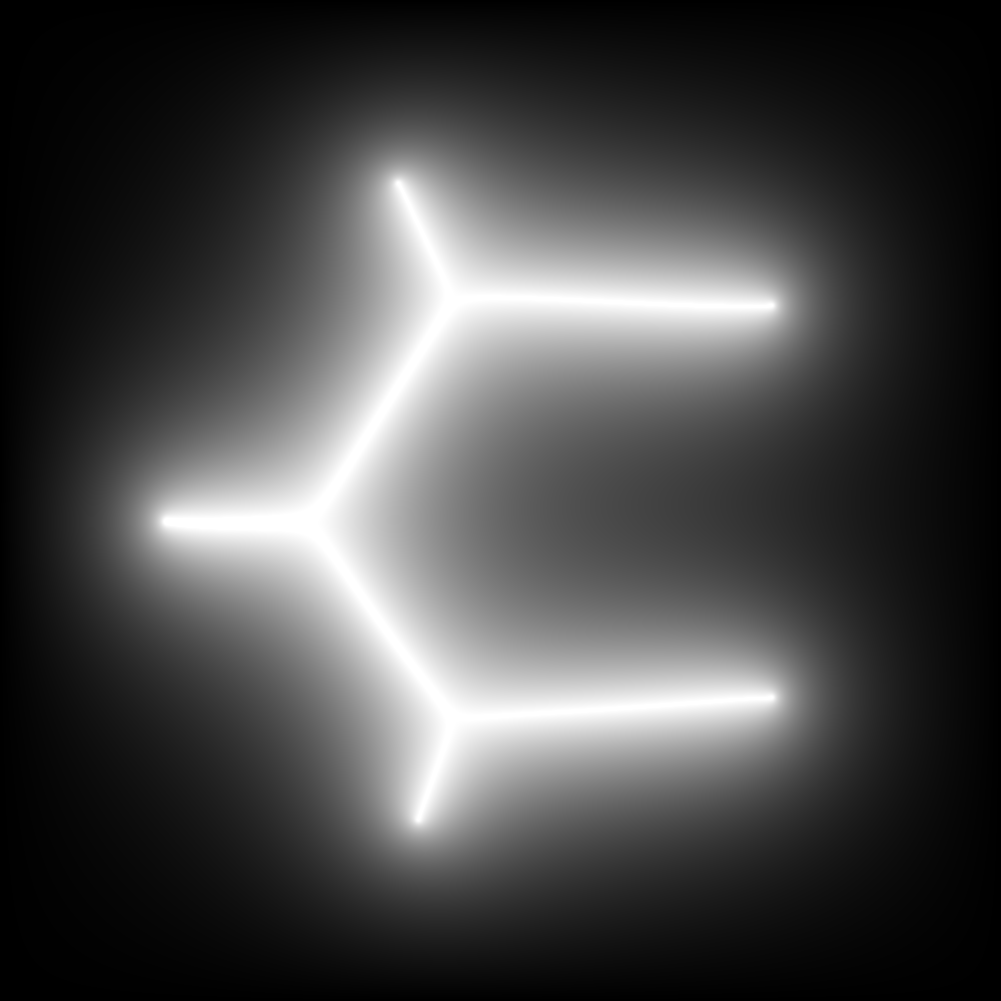} 
	\includegraphics[width = \unitlength,trim=64 36 68 25,clip]{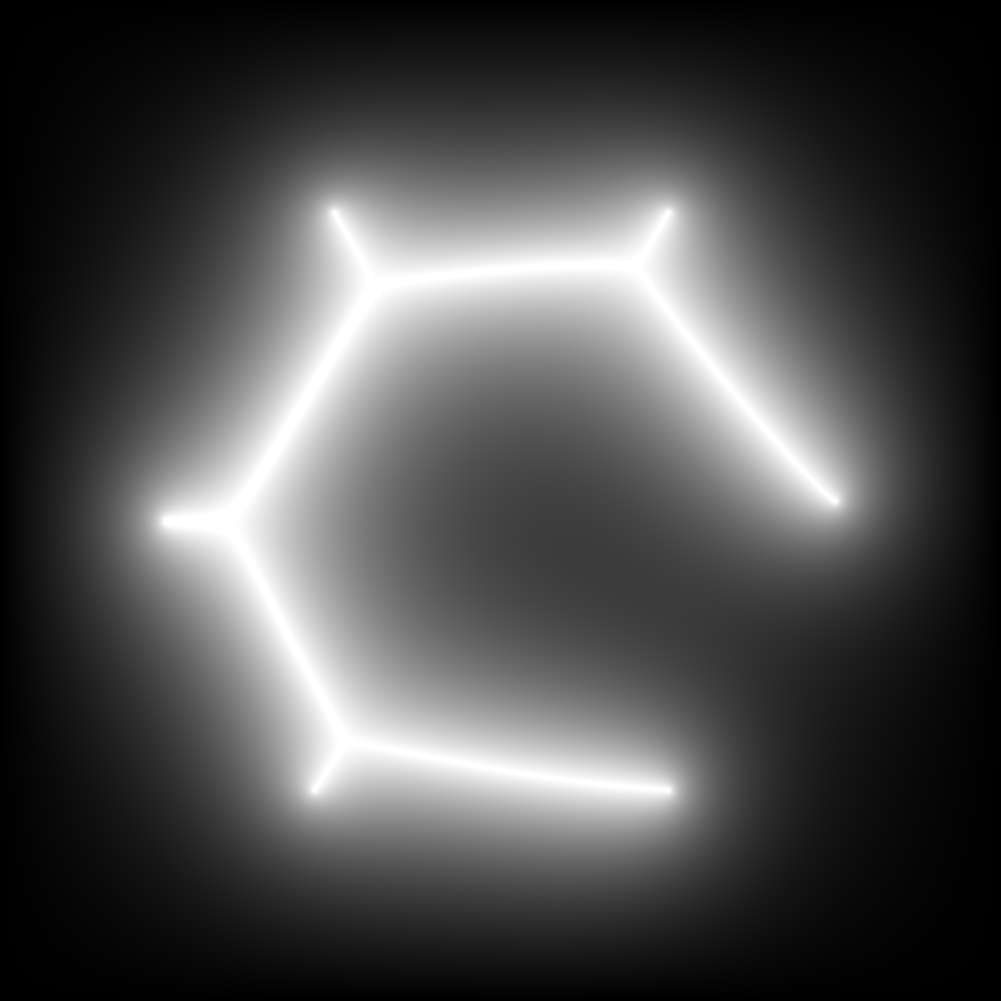} 
	\caption{Optimal transportation networks for branched transportation from a single source to a number of identical sinks at the corners of regular polygons.
	The top row shows the ground truth, computed by finite-dimensional optimization of the vertex locations in a network with straight edges,
	the bottom rows show the computation results from the phase field model, the mass flux $\sigma$ (middle, only support shown) and phase field $\varphi$ (bottom).
	Parameters were $\alpha_1=0.05$, $\beta_1=1$, $\varepsilon=0.005$.}
	\label{fig:SteinerProblem1}
\end{figure}


\begin{figure}
	\centering
	\setlength\unitlength{.23\textwidth}
	\reflectbox{\includegraphics[width = .23\textwidth,trim=130 85 100 80,clip]{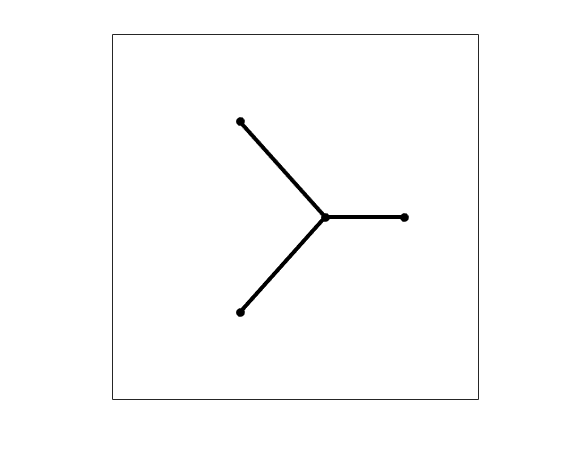}}
	\begin{picture}(0,0)(1,0)\put(-.05,.42){\small$+$}\put(.72,.0){\small$-$}\put(.72,.87){\small$-$}\end{picture}
	\reflectbox{\includegraphics[width = .23\textwidth,trim=130 85 100 80,clip]{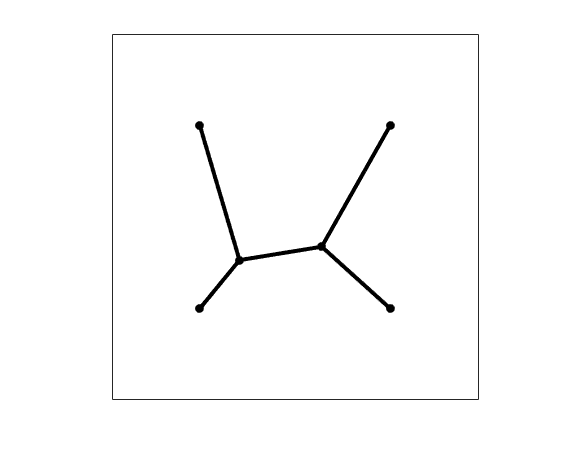}}
	\begin{picture}(0,0)(1,0)\put(.87,.01){\small$+$}\put(.02,.01){\small$-$}\put(.06,.85){\small$-$}\put(.87,.85){\small$-$}\end{picture}
	\reflectbox{\includegraphics[width = .23\textwidth,trim=130 85 110 80,clip]{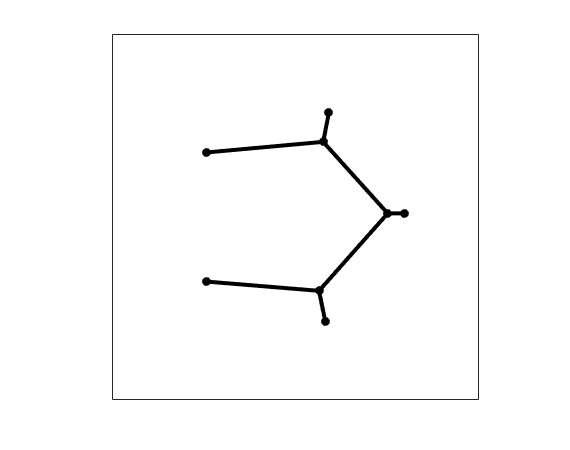}}
	\begin{picture}(0,0)(1,0)\put(-.1,.46){\small$+$}\put(.26,-.02){\small$-$}\put(.86,.14){\small$-$}\put(.86,.77){\small$-$}\put(.26,.94){\small$-$}\end{picture}
	\includegraphics[width = .23\textwidth,trim=130 85 100 80,clip,angle=180,origin=0]{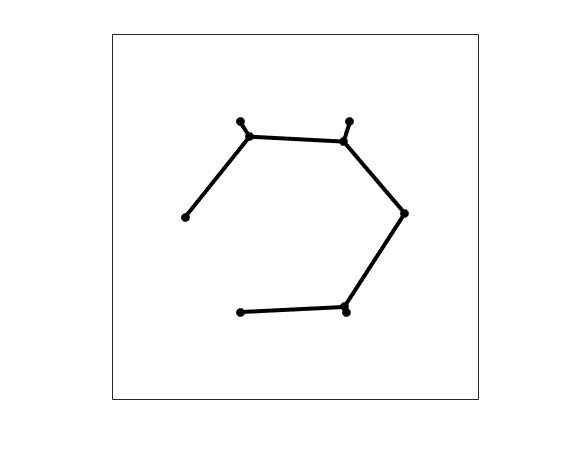}
	\begin{picture}(0,0)(1,0)\put(-.05,.38){\small$+$}\put(.22,-.04){\small$-$}\put(.68,-.04){\small$-$}\put(.23,.82){\small$-$}\put(.67,.82){\small$-$}
	\put(.94,.37){\small$-$}\end{picture} \\	
	\includegraphics[width = \unitlength,trim=25 15 45 15,clip]{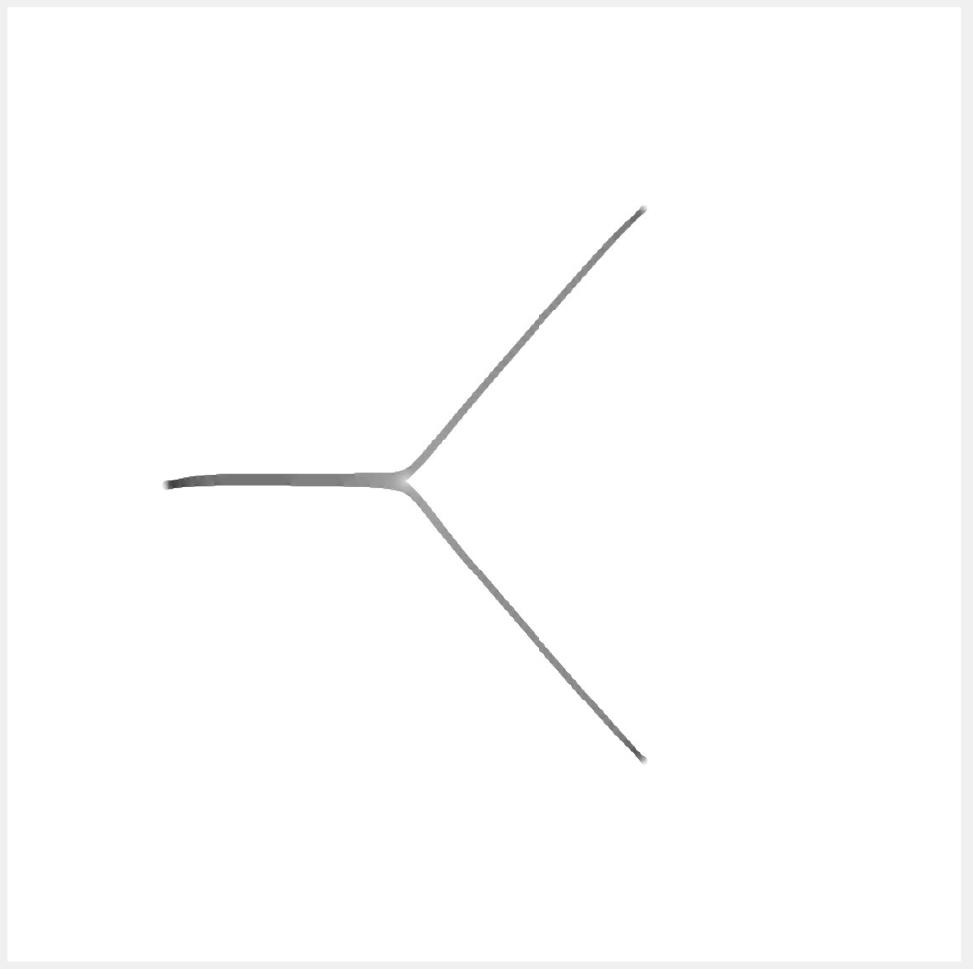} 
	\begin{picture}(0,0)(1,0)\put(-.04,.57){\small$+$}\put(.73,.13){\small$-$}\put(.73,1.0){\small$-$}\end{picture}
	\includegraphics[width = \unitlength,trim=25 15 35 15,clip]{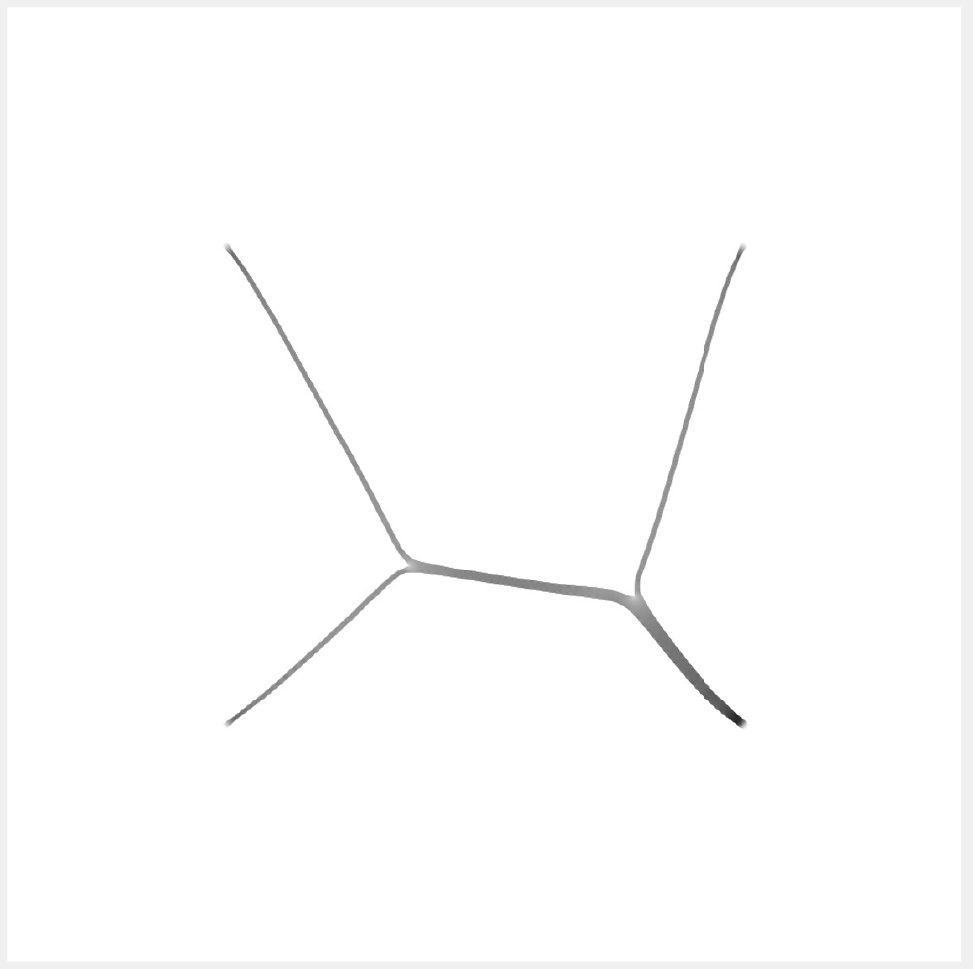} 
	\begin{picture}(0,0)(1,0)\put(.8,.16){\small$+$}\put(.08,.16){\small$-$}\put(.08,.91){\small$-$}\put(.78,.91){\small$-$}\end{picture}
	\includegraphics[width = \unitlength,trim=25 15 35 15,clip]{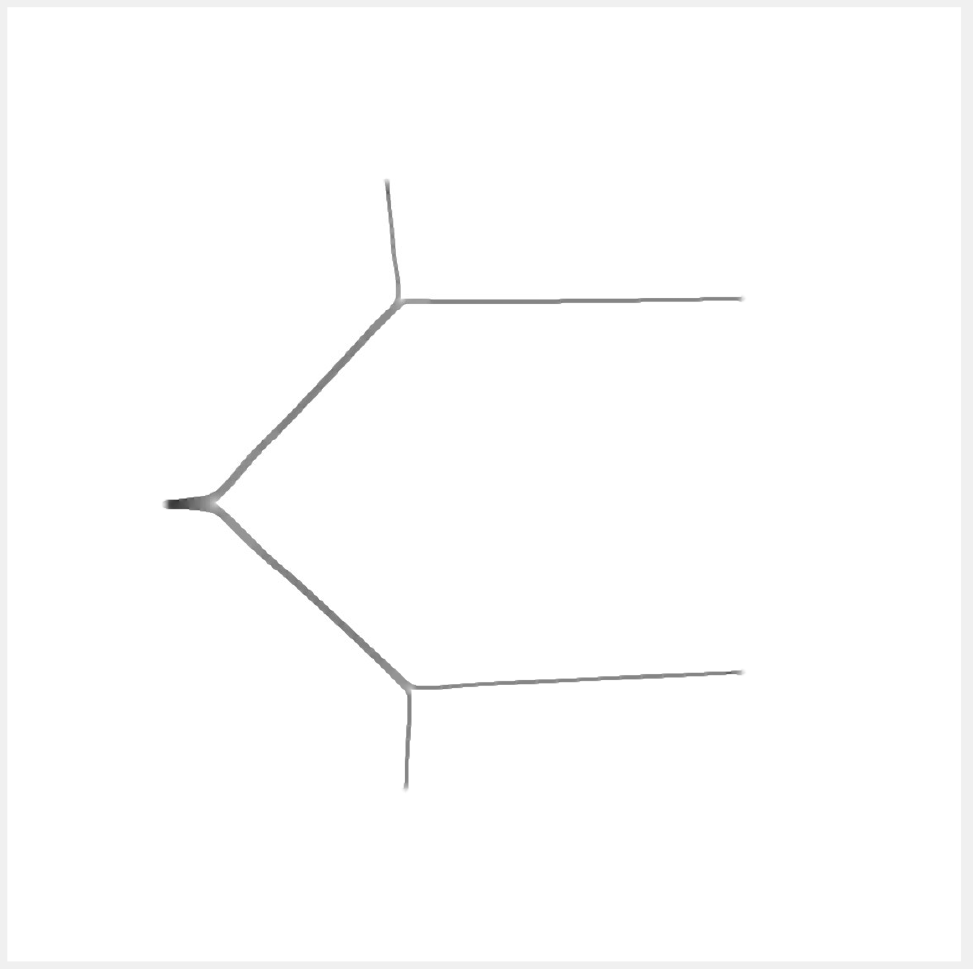} 
	\begin{picture}(0,0)(1,0)\put(-.06,.5){\small$+$}\put(.3,.09){\small$-$}\put(.85,.24){\small$-$}\put(.85,.82){\small$-$}\put(.3,.98){\small$-$}\end{picture}
	\includegraphics[width = \unitlength,trim=25 15 35 15,clip]{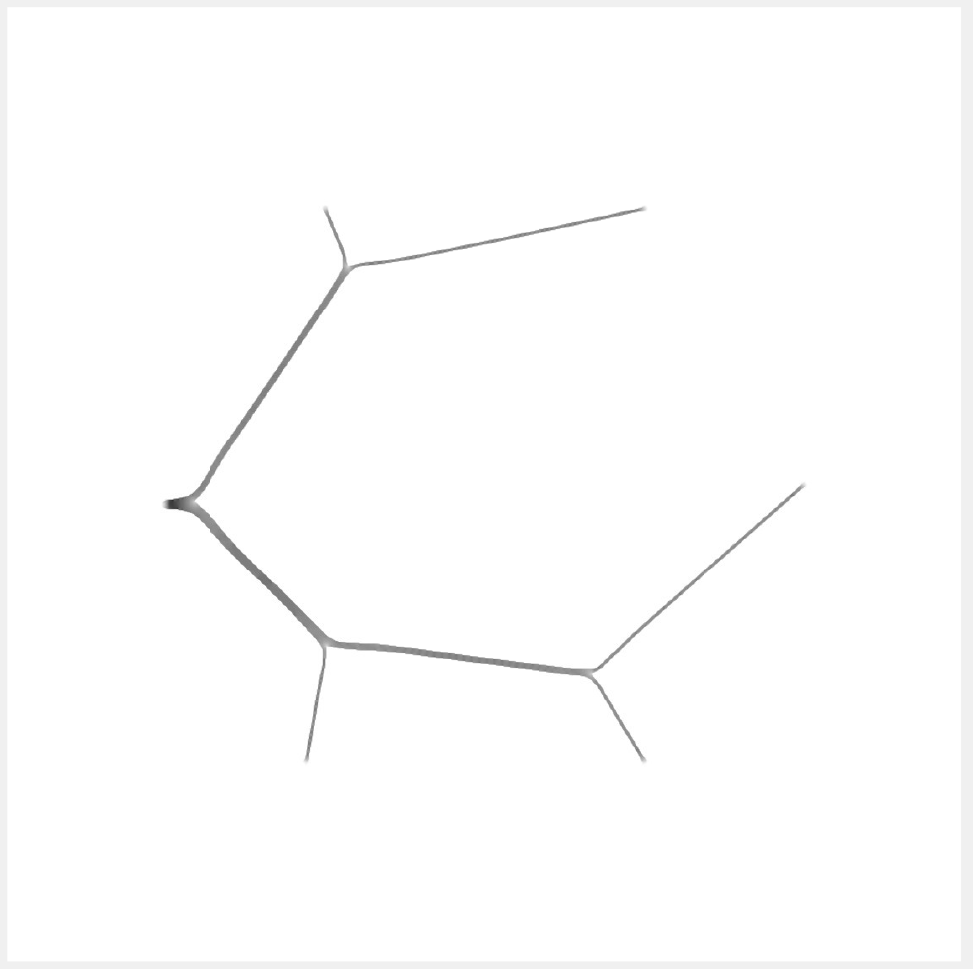}
	\begin{picture}(0,0)(1,0)\put(-.06,.5){\small$+$}\put(.2,.11){\small$-$}\put(.65,.11){\small$-$}\put(.21,.94){\small$-$}\put(.66,.93){\small$-$}
	\put(.93,.51){\small$-$}\end{picture} \\
	\raisebox{\depth}{\scalebox{1}[-1]{\includegraphics[width = \unitlength,trim=64 36 68 25,clip]{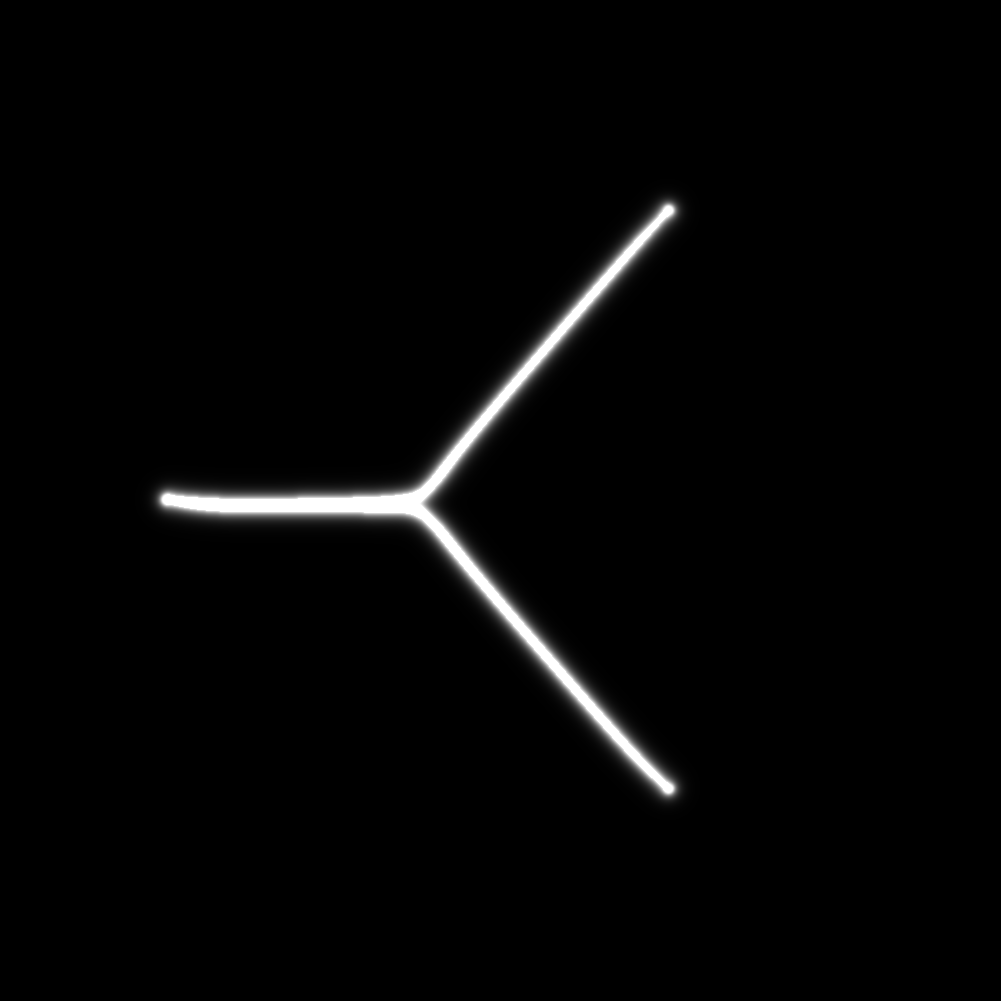}}}
	\raisebox{\depth}{\scalebox{1}[-1]{\includegraphics[width = \unitlength,trim=64 36 68 25,clip]{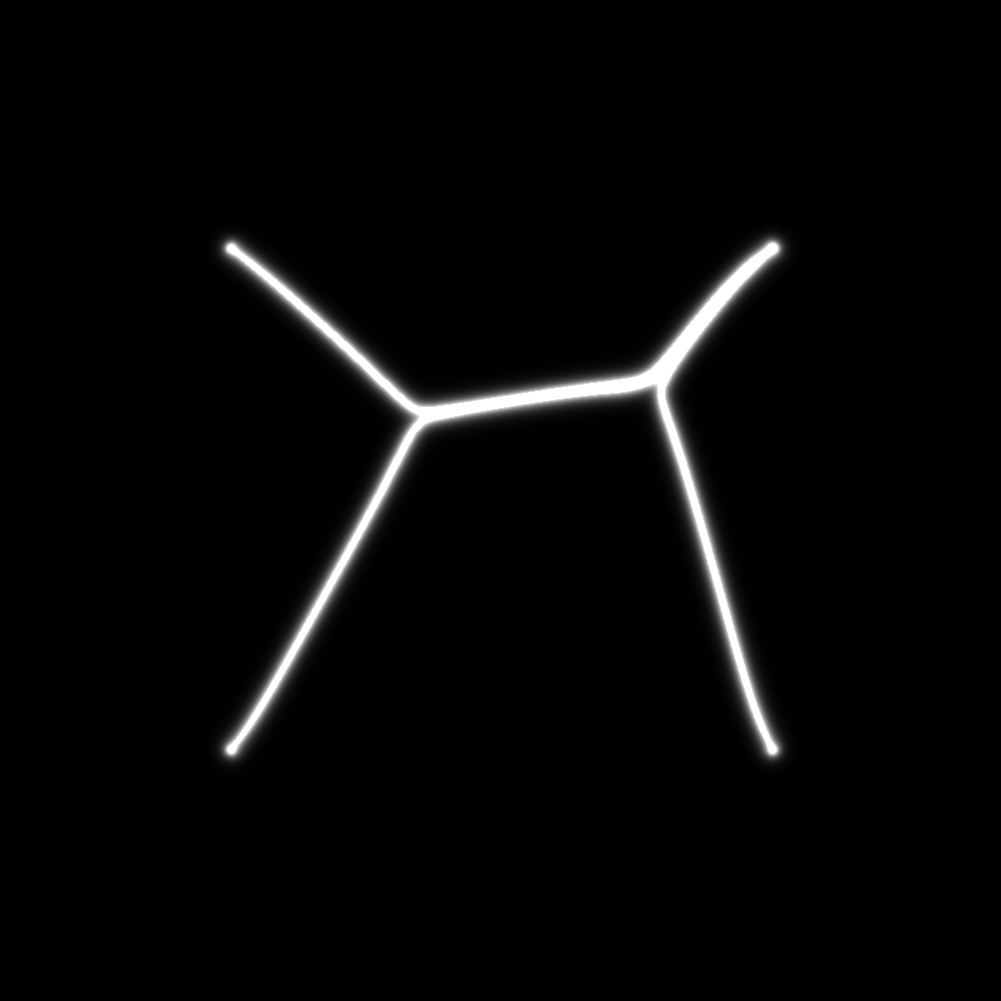}}}
	\raisebox{\depth}{\scalebox{1}[-1]{\includegraphics[width = \unitlength,trim=64 36 68 25,clip]{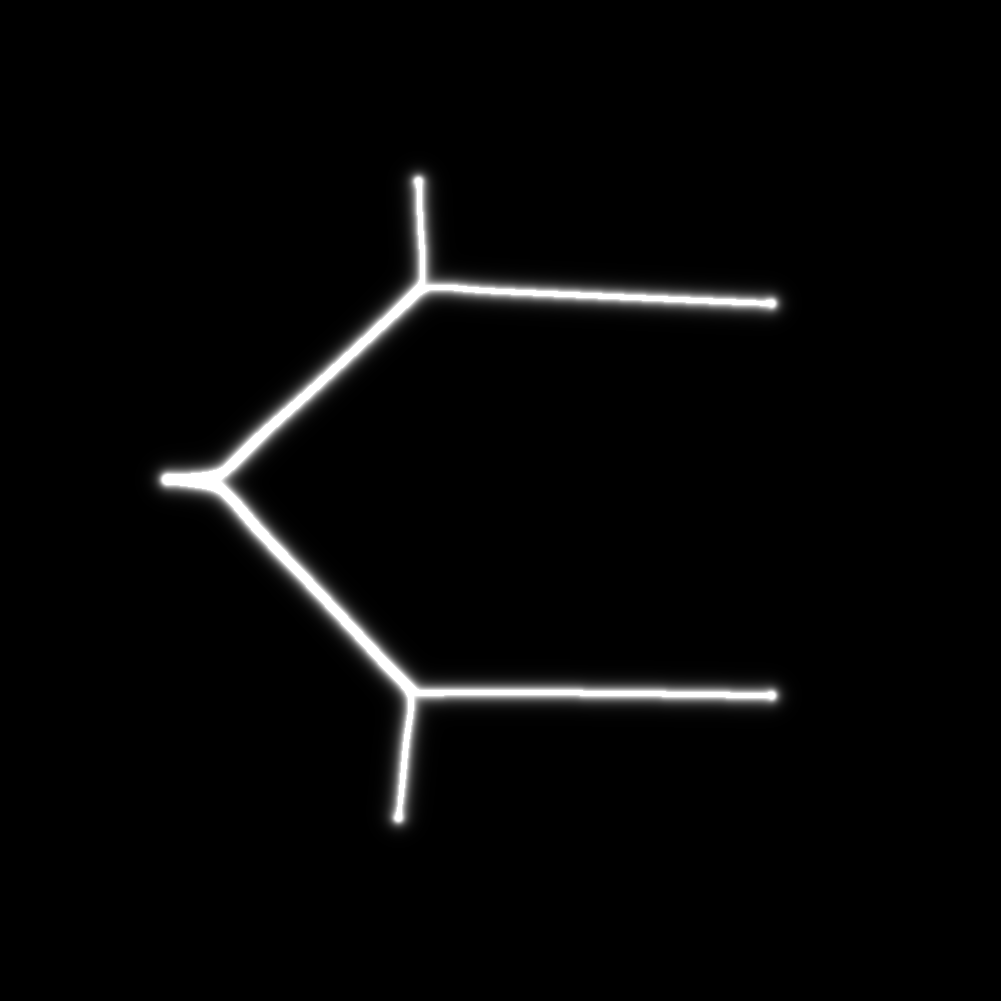}}} 
	\raisebox{\depth}{\scalebox{1}[-1]{\includegraphics[width = \unitlength,trim=64 36 68 25,clip]{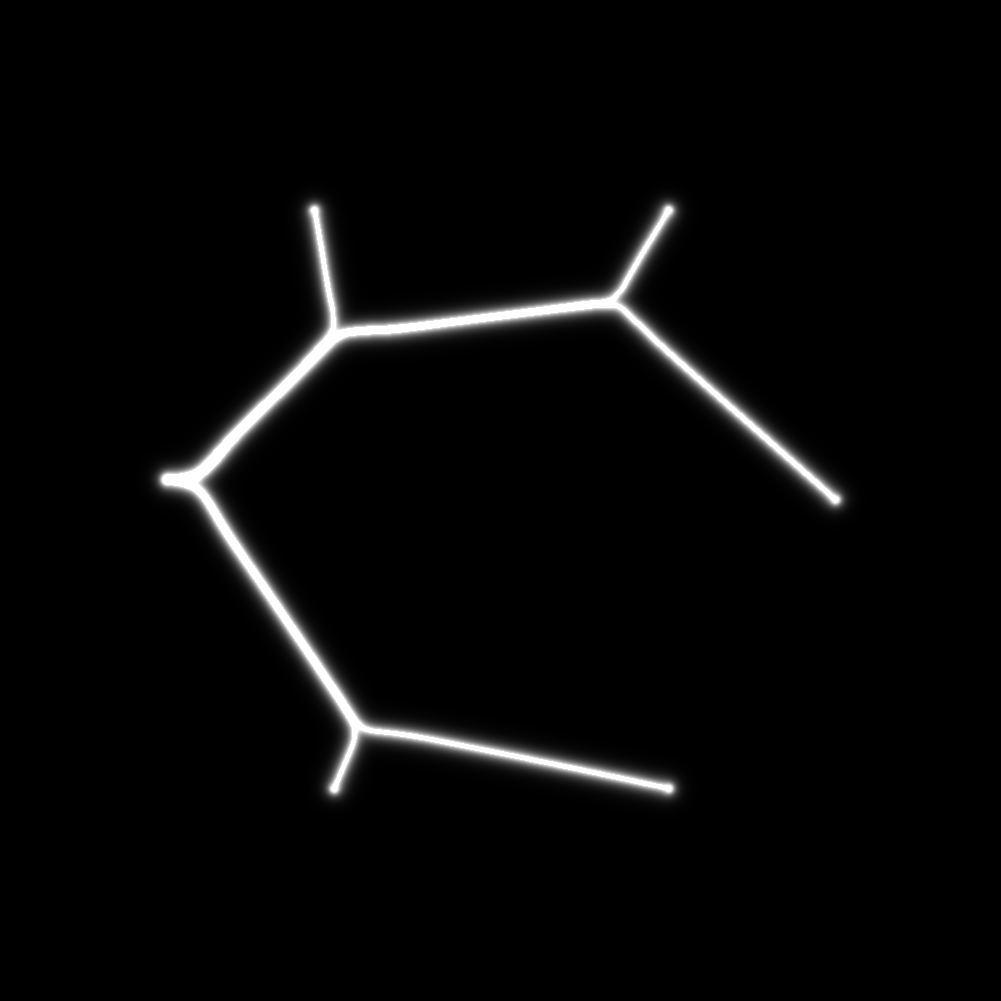}}} 
	\caption{Truly optimal network (top), computed mass flux $\sigma$ (middle), and phase field $\varphi$ (bottom) for same branched transportation problems as in \Cref{fig:SteinerProblem1}, only with $\alpha_1=1$, $\beta_1=1$, $\varepsilon=0.005$.}
	\label{fig:SteinerProblem2}
\end{figure}


\begin{figure}
	\centering 
	\includegraphics[width = 0.3\textwidth,trim=10 10 10 10,clip]{Result4To4_sigma.pdf} 
	\begin{picture}(0,0)(1,0)
	\put(-107.12,18.52){\small$+$}
	\put(-107.12,40.52){\small$+$}
	\put(-107.12,64.52){\small$+$}
	\put(-107.12,87.52){\small$+$}
	\put(-15.12,17.52){\small$-$}
	\put(-15.12,40.52){\small$-$}
	\put(-15.12,64.52){\small$-$}
	\put(-15.12,87.52){\small$-$}
	\end{picture}
	\includegraphics[width = 0.3\textwidth,trim=15 15 15 15,clip]{ResultCircle_sigma.pdf}
	\begin{picture}(0,0)(1,0)
	\put(-57.12,57.52){\small$+$}
	
	\put(-87.12,10.52){\small$-$}
	\put(-71.10,4.52){\small$-$}
	\put(-52.12,3.52){\small$-$}
	\put(-35.12,10.52){\small$-$}
	
	\put(-20.12,26.52){\small$-$}
	\put(-10.12,43.52){\small$-$}
	\put(-10.12,62.52){\small$-$}
	\put(-17.12,80.52){\small$-$}
	
	\put(-106.12,26.52){\small$-$}
	\put(-113.12,42.52){\small$-$}
	\put(-113.12,61.52){\small$-$}
	\put(-103.12,80.52){\small$-$}
	
	\put(-89.12,96.52){\small$-$}
	\put(-71.12,103.52){\small$-$}
	\put(-53.12,103.52){\small$-$}
	\put(-37.12,98.52){\small$-$}
	\end{picture}
	\\
	\includegraphics[width = 0.3\textwidth,trim=0 0 0 0,clip]{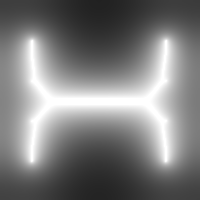} 
	\includegraphics[width = 0.3\textwidth,trim=0 0 0 0,clip,angle=90]{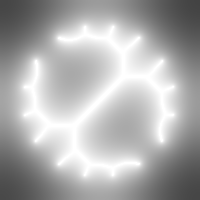}
	\caption{Computed mass flux $\sigma$ and phase field $\varphi$ for same parameters as in \Cref{fig:SteinerProblem1}.}
	\label{fig:singlePhaseField}
\end{figure}

\Cref{fig:multiplePhaseFields} shows simulation results for the same source and sink configuration as in \Cref{fig:singlePhaseField},
but this time with $N=3$ different linear segments in $\tau$ and corresponding phase fields.
It is clear that different phase fields become active on the different network branches according to the mass flux through each branch.
This can be interpreted as having streets of three different qualities:
the street $\varphi_3$ allows faster (cheaper) transport, but requires more maintenance than the others, while street $\varphi_1$ requires the least maintenance and only allows expensive transport.


\begin{figure}
	\centering\mbox{%
	\includegraphics[width = 0.21\textwidth,trim=20 30 20 20,clip]{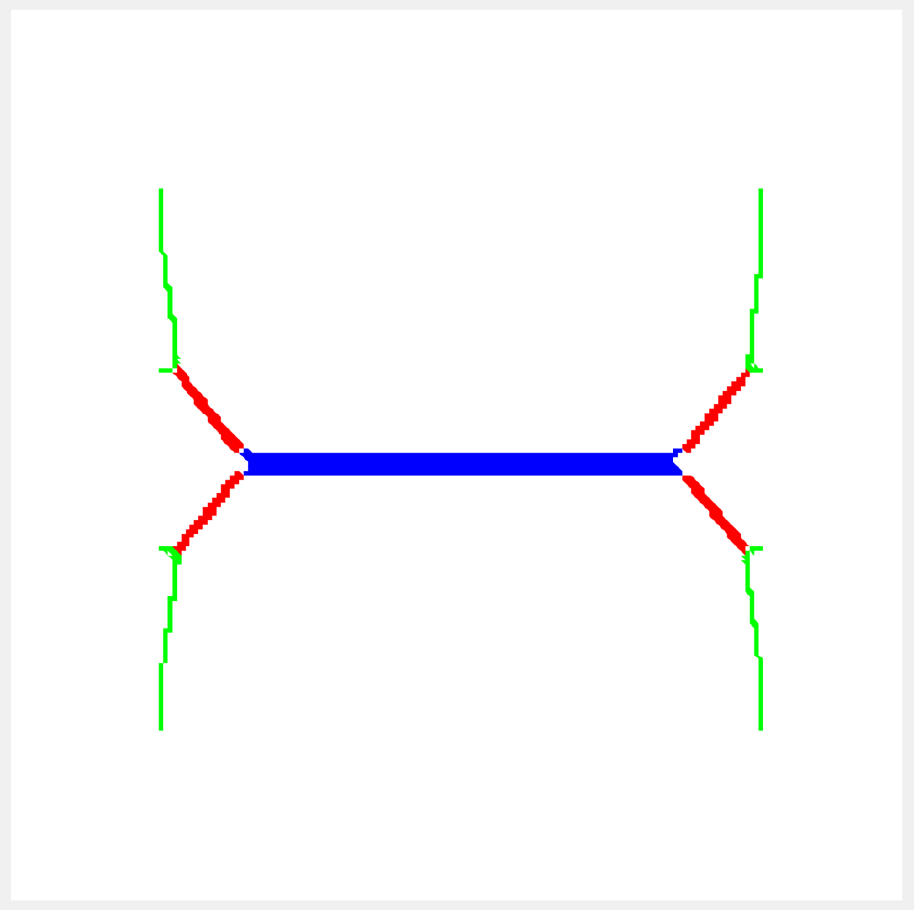} 
	\begin{picture}(0,0)(1,0)
	\put(-80.12,5.52){\small$+$}
	\put(-80.12,23.52){\small$+$}
	\put(-80.12,40.52){\small$+$}
	\put(-80.12,59.52){\small$+$}
	\put(-9.12,5.52){\small$-$}
	\put(-9.12,23.52){\small$-$}
	\put(-9.12,40.52){\small$-$}
	\put(-9.12,59.52){\small$-$}
	\end{picture}
	\includegraphics[width = 0.17\textwidth,clip]{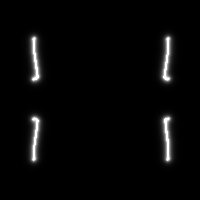} 
	\includegraphics[width = 0.17\textwidth,clip]{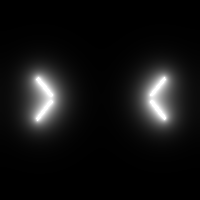} 
	\includegraphics[width = 0.17\textwidth,clip]{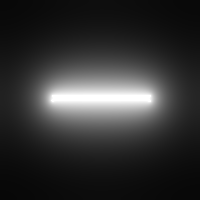} 
	\includegraphics[width = 0.23\textwidth,trim=8 5 30 18,clip]{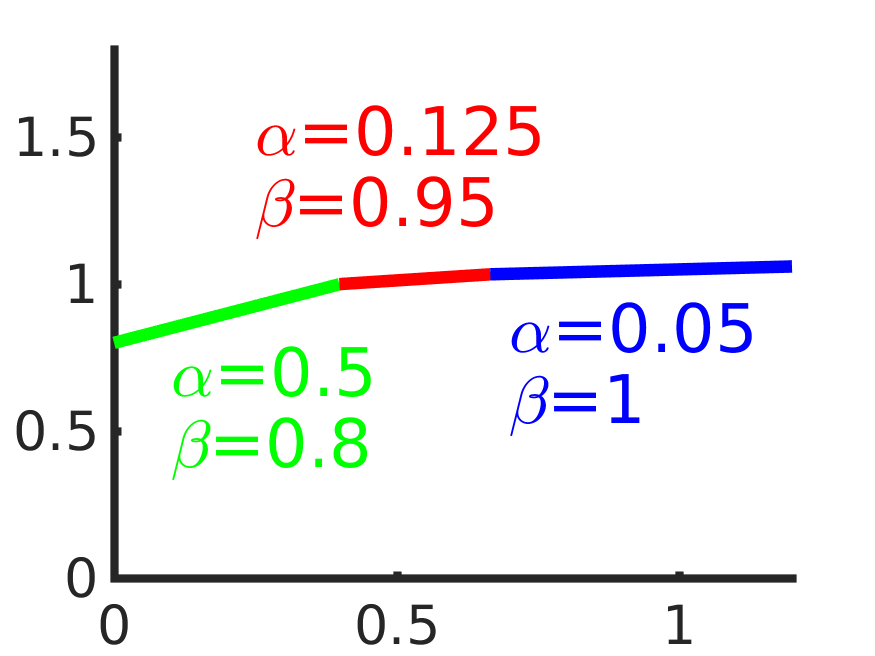}}
	\includegraphics[width = 0.21\textwidth,trim=20 30 20 20,clip]{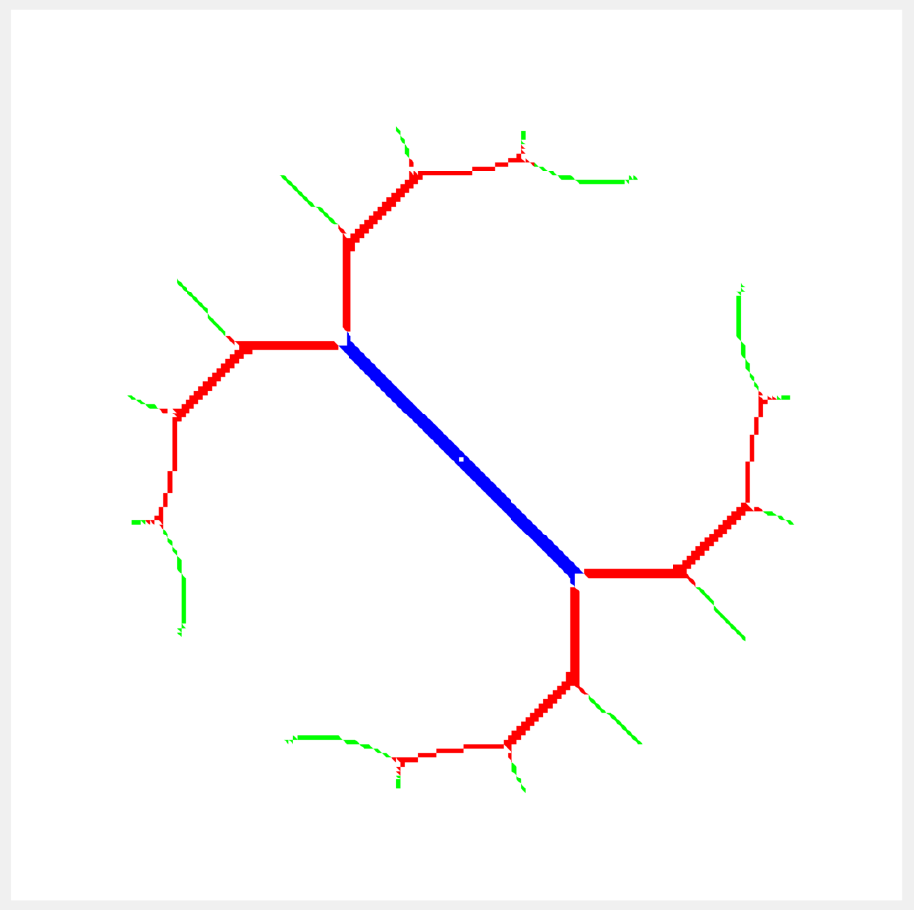} 
	\begin{picture}(0,0)(1,0)
	\put(-41.12,37.52){\small$+$}
	
	\put(-63.12,1.52){\small$-$}
	\put(-51.10,-3.52){\small$-$}
	\put(-38.12,-3.52){\small$-$}
	\put(-25.12,1.52){\small$-$}
	
	\put(-14.12,10.52){\small$-$}
	\put(-8.10,23.52){\small$-$}
	\put(-7.12,39.52){\small$-$}
	\put(-12.12,52.52){\small$-$}
	
	\put(-75.12,11.52){\small$-$}
	\put(-81.10,25.52){\small$-$}
	\put(-81.12,39.52){\small$-$}
	\put(-75.12,52.52){\small$-$}
	
	\put(-64.12,63.52){\small$-$}
	\put(-53.10,69.52){\small$-$}
	\put(-39.12,69.52){\small$-$}
	\put(-25.12,64.52){\small$-$}
	\end{picture}
	\includegraphics[width = 0.17\textwidth,clip,angle=90]{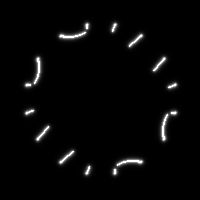} 
	\includegraphics[width = 0.17\textwidth,clip,angle=90]{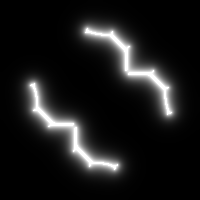} 
	\includegraphics[width = 0.17\textwidth,clip,angle=90]{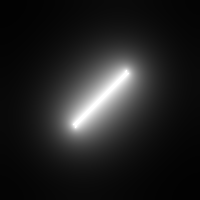} 
	\includegraphics[width = 0.23\textwidth,trim=8 5 30 18,clip]{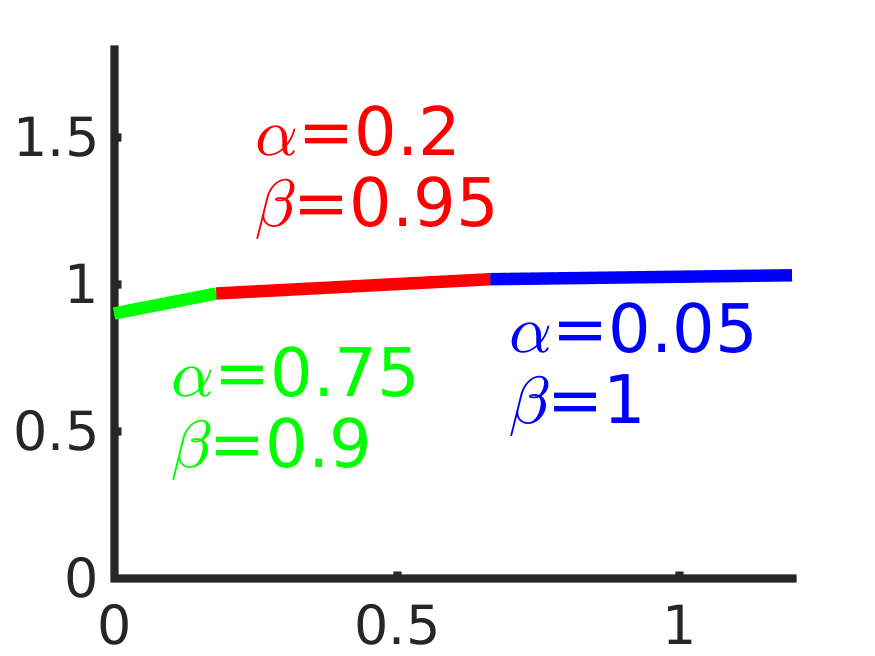} 
	\caption{Computed mass flux $\sigma$ and phase fields $\varphi_1,\varphi_2,\varphi_3$ for the same source and sink as in \Cref{fig:singlePhaseField} and for the cost function shown on the right, $\varepsilon=0.005$. The color in $\sigma$ indicates which phase field is active.}
	\label{fig:multiplePhaseFields}
\end{figure}


The case $\alpha_0<\infty$ finally can be interpreted as the situation in which mass can also be transported off-road, that is,
part of the transport may happen without a street network, thus having maintenance cost $\beta_0=0$, but at the price of large transport expenses $\alpha_0$ per unit mass.
Corresponding results for again the same source and sink configuration are shown in \Cref{fig:multiplePhaseFieldsDiffuse}.
In contrast to the case $\alpha_0=\infty$ it is now also possible to have sources and sinks that are not concentrated in a finite number of points.
A corresponding example is shown in \Cref{fig:diffuse}.



\begin{figure}
	\centering 
	\includegraphics[width = 0.25\textwidth,trim=20 30 20 20,clip]{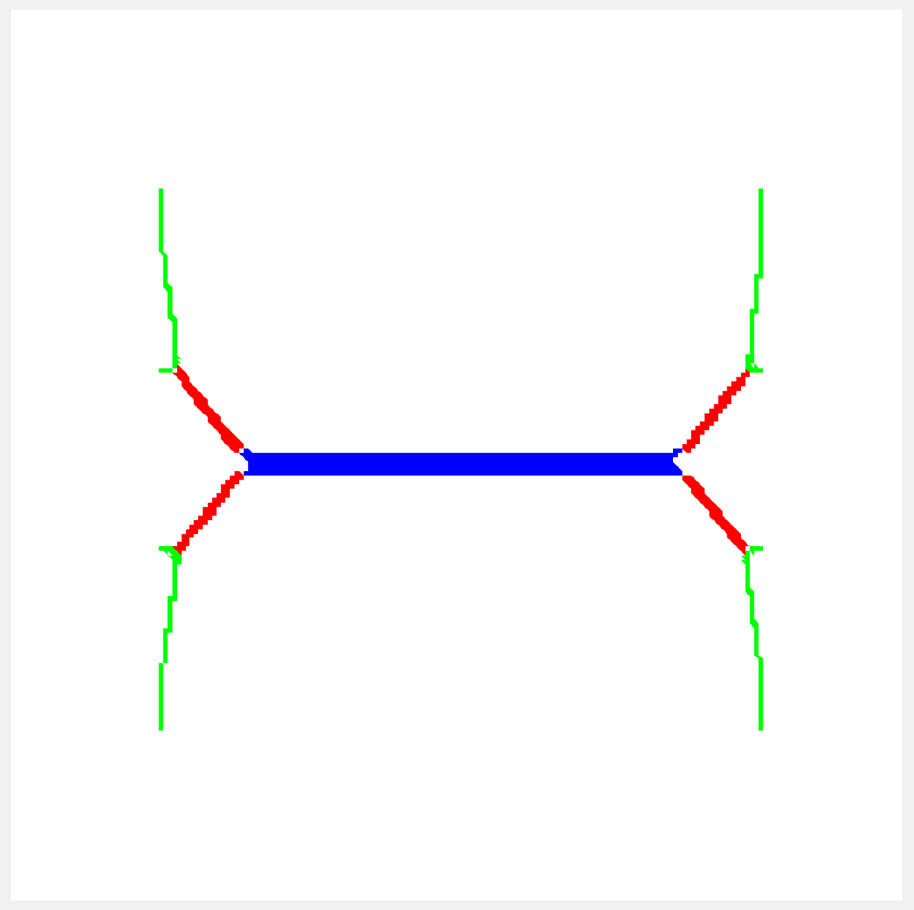} 
	\begin{picture}(0,0)(1,0)
	\put(-94.12,6.52){\small$+$}
	\put(-94.12,27.52){\small$+$}
	\put(-94.12,49.52){\small$+$}
	\put(-94.12,70.52){\small$+$}
	\put(-10.12,6.52){\small$-$}
	\put(-9.12,27.52){\small$-$}
	\put(-9.12,49.52){\small$-$}
	\put(-9.12,71.52){\small$-$}
	\end{picture}
	\includegraphics[width = 0.22\textwidth,clip]{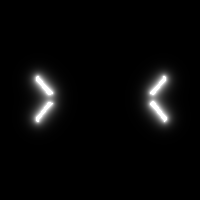} 
	\includegraphics[width = 0.22\textwidth,clip]{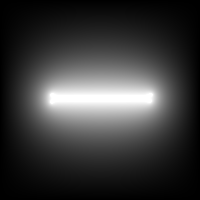} 
	\includegraphics[width = 0.27\textwidth,clip]{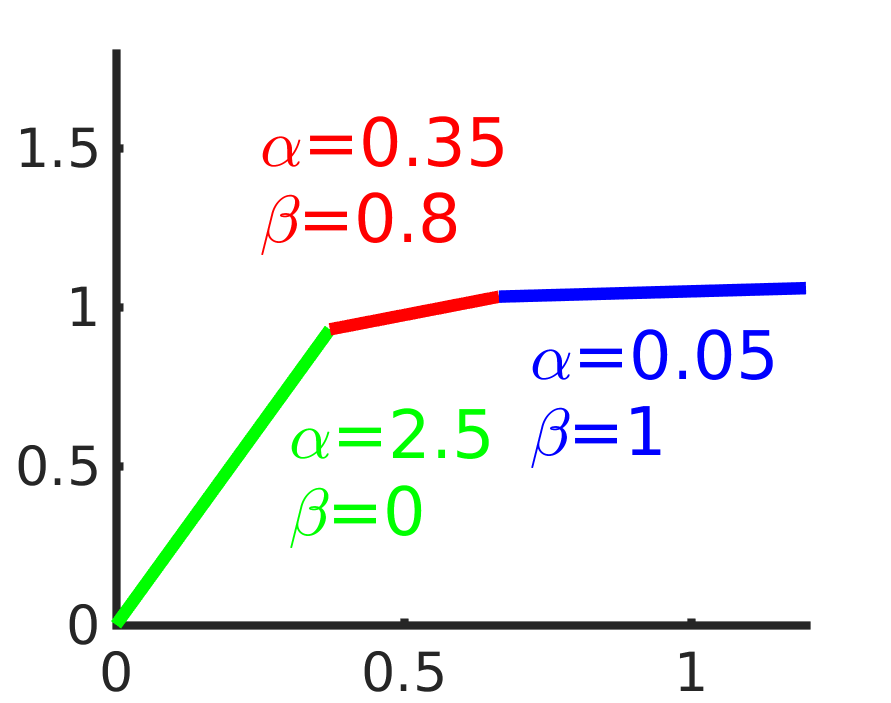}  \\[\baselineskip]
	\includegraphics[width = 0.25\textwidth,trim=20 30 20 20,clip]{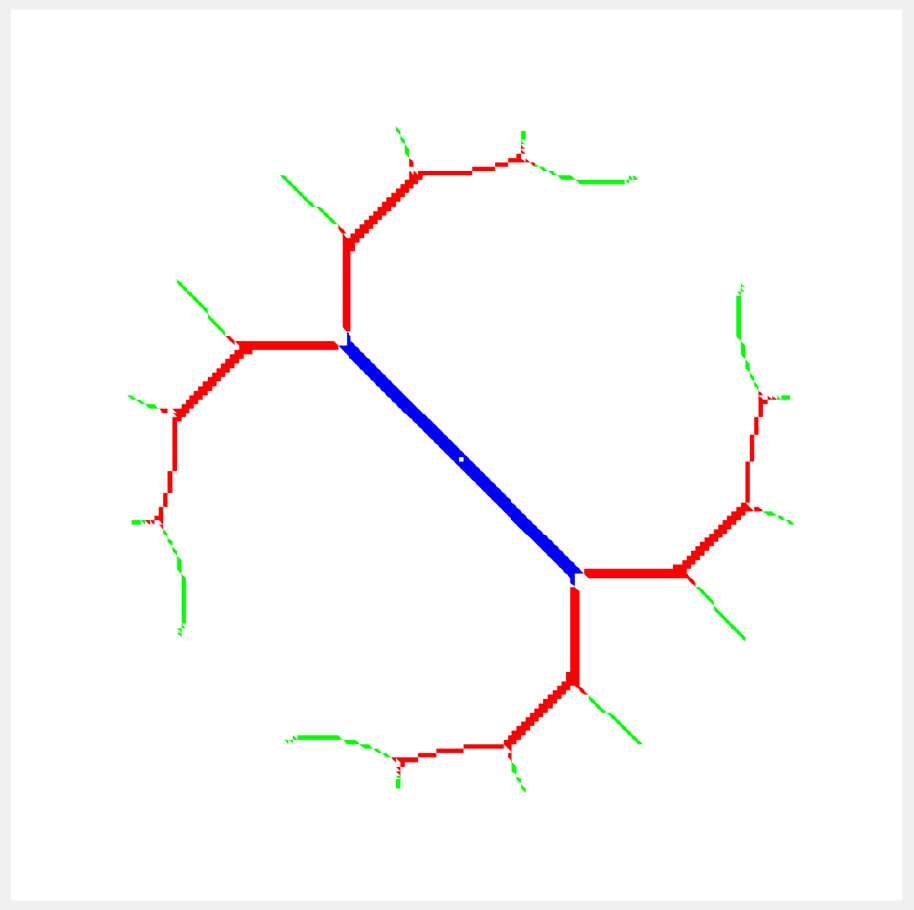} 
	\begin{picture}(0,0)(1,0)
	\put(-47.12,42.52){\small$+$}
	
	\put(-74.12,1.52){\small$-$}
	\put(-61.10,-3.52){\small$-$}
	\put(-44.12,-4.52){\small$-$}
	\put(-28.12,1.52){\small$-$}
	
	\put(-15.12,13.52){\small$-$}
	\put(-8.10,28.52){\small$-$}
	\put(-7.12,46.52){\small$-$}
	\put(-15.12,62.52){\small$-$}
	
	\put(-87.12,13.52){\small$-$}
	\put(-96.10,30.52){\small$-$}
	\put(-96.12,46.52){\small$-$}
	\put(-86.12,62.52){\small$-$}
	
	\put(-74.12,75.52){\small$-$}
	\put(-62.10,82.52){\small$-$}
	\put(-46.12,82.52){\small$-$}
	\put(-31.12,77.52){\small$-$}
	\end{picture}
	\includegraphics[width = 0.22\textwidth,clip,angle=90]{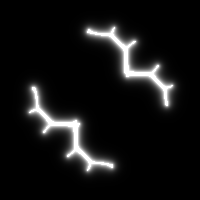} 
	\includegraphics[width = 0.22\textwidth,clip,angle=90]{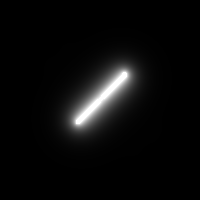} 
	\includegraphics[width = 0.27\textwidth,clip]{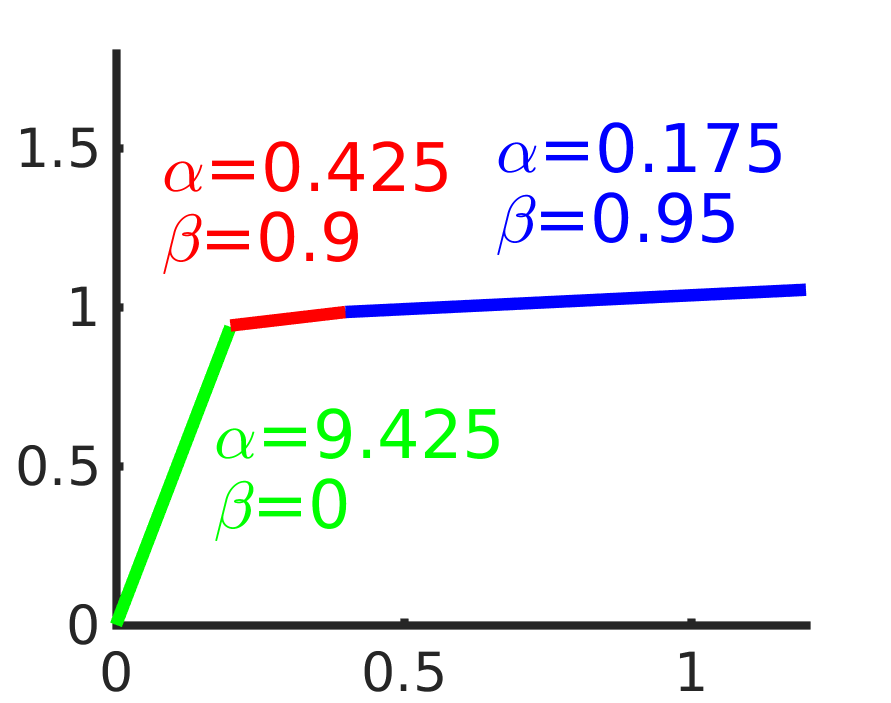} 
	\caption{Computed mass flux $\sigma$ and phase fields $\varphi_1,\varphi_2$ for the same source and sink as in \Cref{fig:singlePhaseField} and for the cost function shown on the right, $\varepsilon=0.005$.}
	\label{fig:multiplePhaseFieldsDiffuse}
\end{figure}

\begin{figure}
	\centering 
	\begin{tikzpicture}
	 \pgftext{\includegraphics[width = 0.28\textwidth,trim=10 10 10 10,clip]{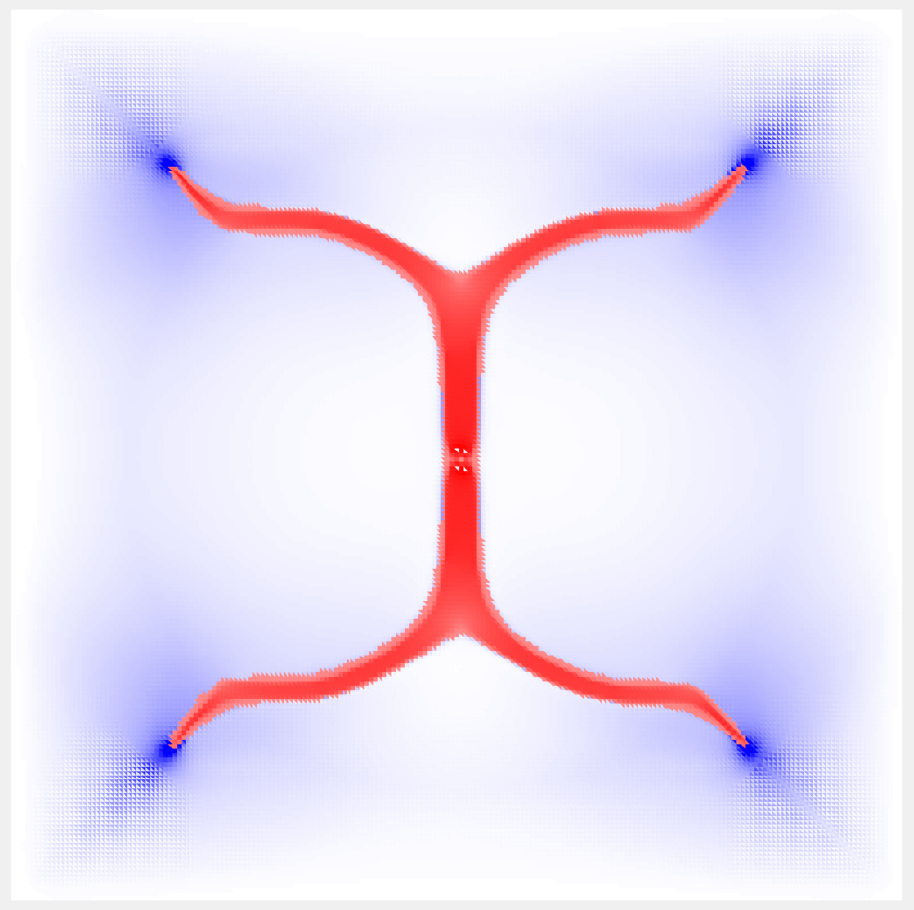}};
	 \draw[dashed,thick] (0.01,0) circle (40pt);
	\end{tikzpicture}
	\hspace{0.3cm}
	\includegraphics[width = 0.28\textwidth,clip]{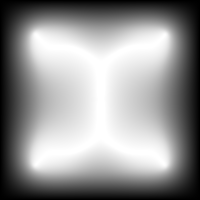} 
	\includegraphics[width = 0.35\textwidth,clip]{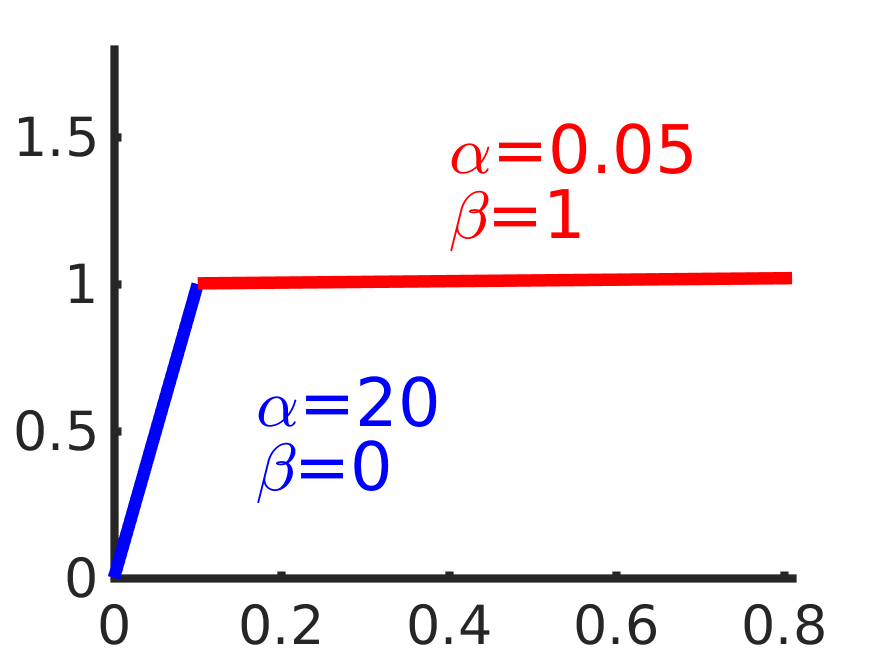} 
	\caption{Computed mass flux $\sigma$ and phase field $\varphi_1$ for a central point source and a spatially uniform sink outside a circle of radius and for the cost function shown on the right, $\varepsilon=0.005$.}
	\label{fig:diffuse}
\end{figure}


\paragraph{Acknowledgements}
The authors thank Antonin Chambolle and B\'enoit Merlet for various discussions and suggestions.
B.W.'s research was supported by the Alfried Krupp Prize for Young University Teachers awarded by the Alfried Krupp von Bohlen und Halbach-Stiftung.
The work was also supported by the Deutsche Forschungsgemeinschaft (DFG), Cells-in-Motion Cluster of Excellence (EXC1003 -- CiM), University of M\"unster, Germany.

\bibliographystyle{plain}
\bibliography{ChFeRoWi17}

\end{document}